\documentclass{article}[12pt]
\usepackage{latexsym}
\usepackage{amsmath}
\usepackage{amsthm}
\usepackage{amsfonts}
\usepackage{amssymb}
\usepackage{graphicx}
\usepackage[top=2.5cm, bottom=2.5cm, left=2.5cm, right=2.5cm]{geometry}
\usepackage{hyperref}
\usepackage{verbatim}
\usepackage{slashed,mathtools,color}
\usepackage[titletoc,toc]{appendix}
\usepackage{float}
\newcommand{\pfstep}[1]{\vspace{.5em} {\it \noindent #1.}}

\newcommand{\bD}{{\bf D}}
\newcommand{\bg}{{\bf g}}

\newcommand{\beq}{\begin{equation}}
\newcommand{\eeq}{\end{equation}}

\makeatletter

\makeatother

\theoremstyle{plain}
\newtheorem{theorem}{Theorem}[section]
\newtheorem{proposition}[theorem]{Proposition}
\newtheorem{lemma}[theorem]{Lemma}

\theoremstyle{definition}
\newtheorem{definition}[theorem]{Definition}

\theoremstyle{remark}
\newtheorem{remark}[theorem]{Remark}

\numberwithin{equation}{section}

\allowdisplaybreaks[4] 

\swapnumbers
\pagestyle{headings}
\parindent = 10 pt

 \begin{document}

\title{The initial boundary value problem for the Einstein equations with totally geodesic timelike boundary}

\author{Grigorios Fournodavlos,\footnote{Sorbonne Universit\'e, CNRS, Universit\'e  de Paris, Laboratoire Jacques-Louis Lions (LJLL), F-75005 Paris, France,
		grigorios.fournodavlos@sorbonne-universite.fr}\;\; Jacques Smulevici\footnote{Sorbonne Universit\'e, CNRS, Universit\'e  de Paris, Laboratoire Jacques-Louis Lions (LJLL), F-75005 Paris, France, jacques.smulevici@sorbonne-universite.fr}}

\date{}

\maketitle

\begin{abstract}
We prove the well-posedness of the initial boundary value problem for the Einstein equations with sole boundary condition the requirement that the timelike boundary is totally geodesic. This provides the first well-posedness result for this specific geometric boundary condition and the first setting for which geometric uniqueness in the original sense of Friedrich holds for the initial boundary value problem. 

Our proof relies on the ADM system for the Einstein vacuum equations, formulated with respect to a parallelly propagated orthonormal frame along timelike geodesics. As an independent result, we first establish the well-posedness  in this gauge of the Cauchy problem for the Einstein equations, including the propagation of constraints. 
More precisely, we show that by appropriately modifying the evolution equations, using the constraint equations, we can derive a first order symmetric hyperbolic system for the connection coefficients of the orthonormal frame. The propagation of the constraints then relies on the derivation of a hyperbolic system involving the connection, suitably modified Riemann and Ricci curvature tensors and the torsion of the connection. In particular, the connection is shown to agree with the Levi-Civita connection at the same time as the validity of the constraints.


In the case of the initial boundary value problem with totally geodesic boundary, we then verify that the vanishing of the second fundamental form {}{of} the boundary leads to homogeneous boundary conditions for our modified ADM system, as well as for the hyperbolic system used in the propagation of the constraints. An additional analytical difficulty arises from a loss of control on the {}{normal} derivatives to the boundary of the {}{solution}. 
{}{To resolve this issue, we work with} an anisotropic scale of Sobolev spaces and {}{exploit} the specific structure of the equations. 
\end{abstract}

\tableofcontents

\parskip = 2 pt


%
\section{Introduction}

This article establishes the well-posedness of the initial boundary value problem (IBVP) for the Einstein {}{vacuum} equations
\begin{align}\label{EVE}
{\bf \mathrm{Ric}(g)}=0,
\end{align}
 in the specific case of a \emph{totally geodesic} timelike boundary.

\subsection{The initial boundary value problem in General Relativity}

In the standard formulation of the Cauchy problem for the Einstein {}{vacuum} equations, given a Riemannian manifold $(\Sigma, h)$ and a $2$-tensor $K$ satisfying the constraints equations 
\begin{align}
\label{Hamconst}R-|{k}|^2+(\text{tr}{k})^2=&\,0,\\
\label{momconst}\mathrm{div}{k}-\mathrm{d}\text{tr}{k}=&\,0,
\end{align}
where $R$ is the scalar curvature of the Riemannian metric $h$ and all operators are taken with respect to $h$, the goal is to construct a Lorentzian manifold $(\mathcal{M},{\bf g})$ solution to the Einstein equations, together with an embedding of $\Sigma$ into $\mathcal{M}$ such that $(h,{k})$ coincides with the first and second fundamental form of the embedding. For the IBVP, we now require that $\Sigma$ is a manifold with boundary $\mathcal{S}$ and we must also complete the initial data ${}{(h,{k})}$ with a set of boundary conditions. A solution to the IBVP is then a Lorentzian manifold $(\mathcal{M},{\bf g})$ with a timelike boundary  $\mathcal{T}$ diffeormorphic to $\mathcal{S} \times{}{\mathbb{R}}$ such that as before, there exists some embedding of $\Sigma$ into $\mathcal{M}$ respecting the {}{initial} data, with the image of $\partial \Sigma$ lying on $\mathcal{T}$, and such that the boundary conditions are verified on $\mathcal{T}$. On top of the constraint equations, the initial data must also now verify the so-called corner or compatibility conditions on $\mathcal{S}$. 


The IBVP is {}{related} to many important aspects of {}{general relativity} and the Einstein equations such as numerical relativity, the construction of asymptotically Anti-de-Sitter spacetimes, timelike hypersurfaces emerging as the boundaries of the support of massive matter fields or the study of gravitational waves in a cavity and their nonlinear interactions. This problem was first addressed for the Einstein equations in the seminal work of Friedrich-Nagy \cite{FriedNag}, as well as by Friedrich \cite{Fried95} in the related Anti-de-Sitter setting.\footnote{See also \cite{CarVal, EncKam} for extensions and other proofs of well-posedness in the Anti-de-Sitter case.} Well-posedness {}{of} the IBVP has since been obtained in generalized wave coordinates, cf~\cite{KRSW} or the recent \cite{AA},\footnote{To be more precise, the boundary data in \cite{AA} relies on an auxiliary wave map equation akin to generalized wave coordinates. This introduces a geometric framework to address the IBVP, albeit for the Einstein equations coupled to the auxiliary wave map equation. } and for various first and second order systems derived from the ADM formulation of the Einstein equations, see for instance \cite{FSm, SarTig} {}{and previous work in numerics \cite{AndYork,FriReu}}. We refer to \cite{SarTig2} for an extensive review of the subject. 


\subsection{Geometric uniqueness} 

One of the {}{remaining} outstanding issues, concerning the study of the Einstein equations in the presence of a timelike boundary, is the geometric uniqueness problem of Friedrich {}{\cite{Fried09}}. Apart from the construction of asymptotically Anti-de-Sitter spacetimes {}{\cite{Fried95}}, where the timelike boundary is a conformal boundary at spacelike infinity, all results establishing well-posedness, for some formulations of the {}{IBVP}, impose certain gauge conditions on the boundary, and the boundary data depend on these choices. In particular, given a solution to the Einstein equations with a timelike boundary, different gauge choices will lead to different boundary data, in each of the formulations for which well-posedness is known. On the other hand, if we had been given the different boundary data a priori, we would not know that these lead to the same solution. The situation is thus different from the usual initial value problem, for which only isometric data {}{lead} to isometric solutions, which one then regards as the same solution.

In the Anti-de-Sitter setting, this problem admits one solution: in [9], Friedrich proved that one can take the conformal metric of the boundary as boundary data, which is a geometric condition independent of any {}{gauge}.\footnote{Note that, even in the Anti-de-Sitter setting, it is possible to formulate other boundary conditions, such as dissipative boundary conditions, for which one knows how to prove some sort of well-posedness, however, with a formulation of the boundary conditions that is gauge dependent and thus, such that we do not know whether geometric uniqueness holds or not.}

\subsection{The IBVP with totally geodesic boundary}

Our main result concerning the IBVP can be formulated as follows. 
\begin{theorem}\label{thmA}
Let $(\Sigma, h, {k})$ be a smooth initial data set for the Einstein {}{vacuum} equations such that $\Sigma$ is a $3$-manifold with boundary ${}{\partial\Sigma=\mathcal{S}}$. Assume that the corner conditions of Lemma \ref{lem:compcond.angle} hold on ${}{\mathcal{S}}$, with respect to a given angle $\omega$, where $\omega$ is a smooth function defined on  $\mathcal{S}$. Then, there exists a smooth Lorentzian manifold $({}{\mathcal{M}}, \bf g)$ solution to the Einstein {}{vacuum} equations with boundary $\partial \mathcal{M}= \widehat{\Sigma} \cup \mathcal{T}$ such that 
\begin{enumerate}
\item there exists an embedding $i$ of $\Sigma$ onto $\widehat{\Sigma}$ with $(h,{k})$ coinciding with the first and second fundamental form of the embedding, \label{th:d}
\item $\mathcal{T} \cap \widehat{\Sigma}=i( \mathcal{S})$ and $\mathcal{T}$ is a timelike hypersurface emanating from $i(\mathcal{S})$ at an angle $\omega$ relative to $\hat{\Sigma}$, \label{th:tb}
\item $\mathcal{T}$ is totally geodesic, i.e.~it has vanishing second fundamental form {}{$\chi$}, \label{th:tg}
\item geometric uniqueness holds: given any other solution $(\mathcal{M}', \bf g')$ verifying \ref{th:d}, \ref{th:tb} and \ref{th:tg}, $(\mathcal{M}, \bf g)$ and $(\mathcal{M}', \bf g')$ are both extensions\footnote{Recall that $(\mathcal{M}, g)$ is an extension of $(\mathcal{M}'', g'')$, if there exists an isometric {}{embedding} ${}{\psi}: \mathcal{M}'' \rightarrow \mathcal{M}$, preserving orientation, and such that $\psi \circ i''=i$, where $i'': \Sigma \rightarrow \mathcal{M}''$ is the embedding of the initial hypersurface into $\mathcal{M}''$.}   of yet another solution verifying \ref{th:d}, \ref{th:tb} and \ref{th:tg}. 
\end{enumerate}
\end{theorem}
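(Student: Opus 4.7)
My strategy is to reduce the IBVP to the Cauchy problem well-posedness established earlier in the paper, and to exploit the geometric meaning of the totally geodesic condition so that it manifests as a set of homogeneous boundary conditions compatible with the modified ADM symmetric hyperbolic system for the connection coefficients. Throughout, the goal is to choose a gauge adapted simultaneously to the parallelly propagated frame along timelike geodesics and to the boundary $\mathcal{T}$.

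First, I would set up the gauge. On the initial slice $\widehat{\Sigma}$ I pick an orthonormal frame $(e_0,e_1,e_2,e_3)$ with $e_0$ the future-directed unit normal, and choose $e_3|_{\mathcal{S}}$ to be the unit vector lying in the $(e_0,\nu)$-plane that is tangent to $\mathcal{T}$, where $\nu$ is the unit inward normal to $\mathcal{S}$ in $\widehat{\Sigma}$; the angle $\omega$ of item~\ref{th:tb} dictates precisely this tilt. I then parallel transport the entire frame along the integral curves of $e_0$, which are timelike geodesics. Because $\mathcal{T}$ is to be totally geodesic (item~\ref{th:tg}), a geodesic starting on $\mathcal{T}$ with velocity tangent to $\mathcal{T}$ remains on $\mathcal{T}$, so that $e_0|_{\mathcal{T}}$ and $e_3|_{\mathcal{T}}$ stay tangent to $\mathcal{T}$ for all time. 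In this gauge, the dynamical variables are the frame components and the connection coefficients $\Gamma^a{}_{bc}$, for which the modified ADM symmetric hyperbolic system derived in the Cauchy-problem part of the paper applies verbatim in the interior.

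Next, I would translate $\chi=0$ into concrete boundary conditions. With $e_3|_{\mathcal{T}}$ being the outward unit normal, the second fundamental form is $\chi_{AB}=\bg(\nabla_{e_A}e_3,e_B)$ for $A,B\in\{0,1,2\}$, which in frame language is exactly the vanishing of certain $\Gamma^3{}_{AB}$ components (appropriately symmetrized) on $\mathcal{T}$. I would then check, component by component, that these vanishing conditions are homogeneous Dirichlet conditions precisely on the incoming characteristic modes of the modified ADM system at $\mathcal{T}$, making the boundary condition maximally dissipative; the same verification has to be carried out for the hyperbolic system governing the torsion and the modified Riemann/Ricci tensors used in the propagation of constraints, where the initial vanishing on $\mathcal{S}$ (from the constraint and compatibility conditions) together with $\chi=0$ on $\mathcal{T}$ should yield trivial boundary data. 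Uniqueness for this constraint-propagation system then forces the torsion and curvature modifications to vanish identically, so that the resulting $\bg$ actually solves \eqref{EVE}.

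The main analytic obstacle is the loss of control on normal derivatives at $\mathcal{T}$: the Dirichlet condition $\chi=0$ controls only tangential derivatives of $e_3$, while standard $H^s$ energy estimates for a symmetric hyperbolic system require commuting with the full set of derivatives including the transverse one. To overcome this, I would work in anisotropic Sobolev spaces $H^{s,k}$ that decouple tangential and normal regularity. Tangential derivatives commute with the boundary condition and can be estimated directly by the standard energy method, while normal derivatives are recovered algebraically by solving the evolution equations themselves for the missing transverse components, order by order, exploiting the specific structure of the modified ADM system. The hard part will be verifying that this recovery closes at every order without losing derivatives and that the resulting anisotropic estimates remain compatible with the symmetric hyperbolic energy identity; this is where the specific structure of the modified system referred to in the abstract is indispensable. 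Finally, geometric uniqueness follows because $\chi=0$ is intrinsic: given two solutions verifying \ref{th:d}--\ref{th:tg}, a standard patching argument using the local uniqueness just established in the geodesic-frame gauge produces a common extension.
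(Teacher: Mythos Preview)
Your overall strategy matches the paper's: cast the problem in the modified ADM system in a geodesic orthonormal frame, read off homogeneous boundary conditions from $\chi=0$, close estimates in anisotropic Sobolev spaces by commuting only tangentially and recovering normal derivatives from the equations, and finally propagate constraints via a second symmetric hyperbolic system. The analytic outline is essentially correct.

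There is, however, a genuine gap in your frame construction at the corner $\mathcal{S}$. You take $e_0$ to be the unit normal to $\widehat{\Sigma}$ and then try to pick $e_3$ in the $(e_0,\nu)$-plane ``tangent to $\mathcal{T}$'' (and a few lines later call $e_3$ the outward unit normal to $\mathcal{T}$). These two requirements are incompatible unless the angle $\omega$ vanishes: if $e_3$ is orthogonal to $e_0$ and lies in the $(e_0,\nu)$-plane, then $e_3=\pm\nu$, which is neither tangent nor normal to $\mathcal{T}$ when $\omega\neq 0$. Consequently the $e_0$-geodesics emanating from $\mathcal{S}$ are not tangent to $\mathcal{T}$, the parallel-propagated frame is not adapted to the boundary, and the clean boundary conditions $K_{A3}=\Gamma_{A3B}=0$ do not follow from $\chi=0$.

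The paper resolves this by an extra step you omit: before setting up the geodesic gauge one first passes to a new Cauchy slice $\Sigma_0$, contained in the domain of dependence of $\Sigma$ (where the classical solution already exists), which meets $\mathcal{T}$ \emph{orthogonally}. On $\Sigma_0$ the future unit normal $e_0$ is tangent to $\mathcal{T}$ at the corner and one can take $e_3=N$, the unit normal to $\mathcal{T}$; then the totally geodesic condition guarantees (Lemma~\ref{lem:adframe}) that the parallel-propagated frame remains adapted, $e_3=N$ on all of $\mathcal{T}$. Only after this reduction do the boundary conditions take the homogeneous Dirichlet form you anticipate, and the boundary flux in the energy identity in fact \emph{vanishes} identically (not merely has a sign), both for the reduced evolution system and for the constraint-propagation system. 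Without the orthogonal-slice step your argument does not get off the ground for generic $\omega$.
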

\begin{remark} \label{rk:pibvp}
The above theorem is obtained using a system of reduced equations based on the ADM system in a geodesic gauge. For the reduced equations, due to the presence of a boundary and our choice of boundary conditions, we prove local well-posedness in a scale of anisotropic Sobolev spaces, see Definition \ref{def:Bs} and {}{Proposition \ref{prop:bdlocex}}. Indeed, the boundary conditions can a priori only be commuted by tangential derivatives to the boundary. Thus, our Sobolev spaces distinguish between derivatives tangential and {}{normal} to the boundary. In view of this, the {}{normal} derivatives cannot be estimated using commutation and standard energy estimates, but instead, are recovered from the equations directly, which allow to rewrite {}{normal} derivatives in terms of tangential ones. However, the structure of the equations plays an essential role here, since some components do not have any {}{normal} derivatives appearing in the equations. The anisotropic Sobolev spaces provide a solution to this analytical problem, cf.~proof of {}{Proposition \ref{prop:bdlocex}.} 
\end{remark}
\begin{remark}
Since the reduced system is solved in (anisotropic) Sobolev spaces, one can obtain a similar statement assuming only that the initial data lie in a standard $H^s$ space, $s \ge 7$, with corner conditions satisfied up to the corresponding finite order. 
\end{remark}

\begin{remark}
The angle $\omega$ measures the (hyperbolic) rotation that makes the unit normals to $\Sigma$ on $\mathcal{S}$ and to $\mathcal{S}$ within $\Sigma$ adapted to the boundary, i.e. tangential and normal to $\mathcal{T}$ respectively, see \eqref{omega} and Figure \ref{fig-Sigma}. 
Although we first write the corner conditions for $\Sigma$ with an angle in Lemma \ref{lem:compcond.angle}, we then switch to an orthogonal slice $\Sigma_0$ to $\mathcal{T}$ and reformulate the induced corner conditions for $\Sigma_0$ in Lemma \ref{lem:compcond}.
\end{remark}

\begin{remark}
Note that, importantly, our choice of boundary conditions {}{for the Einstein equations} translates to admissible boundary conditions both for the reduced system of evolution equations that we use to construct a solution {}{(see Lemma \ref{lem:bdcond})} and for the hyperbolic system that allows a posteriori to prove the propagation of constraints {}{(see Lemma \ref{lem:bdcond.Ric})} and recover the Einstein equations. More precisely, $\chi\equiv0$ on the boundary implies the validity of the momentum constraint,\footnote{Note that we are not referring to \eqref{momconst} here, but to the analogous constraint equations where $k$ is replaced by $\chi$ and $\mathrm{div}$ and $\mathrm{tr}$ are the divergence and trace with respect to the induced metric on the boundary.} 
which translates to homogeneous boundary conditions for certain Ricci components.  
\end{remark}

\begin{remark}
The geometric uniqueness is a direct consequence of our choice of geometric boundary conditions. Although totally geodesic boundaries are of course quite special, this result provides the first setting in which geometric uniqueness holds for the Einstein {}{vacuum} equations with zero cosmological constant $\Lambda=0$.
\end{remark}

\begin{remark}
Note that any Lorentzian manifold admitting a spacelike Killing vector field {}{which is also hypersurface orthogonal} provides an example with such a totally geodesic boundary. For instance, any constant $t$ hypersurface in the interior of {}{a Schwarzschild} black hole can be seen as such a hypersurface. 
\end{remark}

\begin{remark}
If one thinks {}{of} $\chi\equiv0$ as the vanishing of {}{the Lie derivative of the solution in the normal direction to the boundary}, our boundary conditions could be interpreted as homogeneous Neumann boundary conditions, and, in this respect, a natural direction for possible extensions of this result would be to consider inhomogeneous Neumann type boundary conditions, for instance by prescribing a non-zero $\chi$. However, there {}{seem} to be {}{nontrivial} obstructions for such {}{type} of results to hold, both {}{analytic}, due to various losses of derivatives, and {}{geometric}, since geodesics of the boundary are no longer geodesics of the Lorentzian manifold. On a more physical point of view, note that if one thinks of $\chi\equiv0$ as a form of homogeneous Neumann boundary conditions, our setting is applicable {}{to} the study of gravitational waves in a cavity.
\end{remark}

\begin{remark}
	Recall that if $\phi: (\mathcal{M},{\bf g}) \rightarrow (\mathcal{M},{\bf g})$ is an isometry of a Riemanian or Lorentzian manifold, then every connected component of the set of fixed points $\{ p \in \mathcal{M}\,: \,\phi(p)=p \}$ is totally geodesic \cite{WK}. This suggests\footnote{We would like to thank M.T.~Anderson and E.~Witten for this suggestion.} another possible proof of Theorem \ref{thmA}, at least in the case where the initial data intersect the boundary orthogonally, based on extending the initial data via reflection, then solving the regular Cauchy problem for the extended data and finally checking that the resulting spacetime enjoys a discrete isometry. Of course, this approach is clearly not generalizable to other kind of boundary conditions, while the proof of this paper may serve as a basis for further applications in the subject. 
	\end{remark}

\subsection{The hyperbolicity of the ADM system in a geodesic gauge}

As already explained, our choice of evolution equations is based on the ADM formulation of the Einstein equations. 
This formalism and its many variants are widely used in the study of the Einstein equations, by numerical or theoretical means. They are based on a $3+1$ splitting of the underlying Lorentzian manifold $(\mathcal{M},{\bf g})$ through a choice of time function $t$ and the foliation induced by its level sets $\Sigma_t$. The main dynamical variables are then the first and second fundamental forms $(g,K)$ of each $\Sigma_t$, that satisfy, together with the lapse and shift of the foliation, a system of partial differential equations, which is first order in the time derivative. This system is generally underdetermined due to the geometric invariance of the equations. In order to render it well-determined, one naturally needs to make additional gauge choices, leading to a reduced system of equations. In full generality, they are many possible such choices, see for example \cite{Fried96} and the references therein. However, the well-posedness problem has only been rigorously studied so far in certain specific cases, as in \cite{AndMon, SarTig}.

In this paper, we consider the reduced ADM system for the Einstein vacuum equations, 
obtained by writing the equations in an orthonormal frame $\{e_\mu\}_{\mu=0}^3$, which is parallelly propagated with respect to a family of timelike geodesics. In this setting, the lapse of the foliation  is fixed to $1$, while the shift is set to zero, and the spacetime metric takes the form\footnote{Here the Einstein summation is used for the Latin indices that range in 1,2,3.} 
\begin{align}\label{metric}
{\bf g}=-dt^2+g_{ij}dx^idx^j,
\end{align}
where $(x^1,x^2,x^3)$ are $t$-transported coordinates, with respect to which the orthonormal frame is expressed via
\begin{align}\label{fij}
e_0=\partial_t,\qquad e_i=f_i{}^j\partial_j,\qquad\partial_j=f^b{}_j\, e_b,\qquad i,j=1,2,3,
\end{align}
where $f_i{}^jf^b{}_j=\delta_i^b$, $f^b{}_jf_b{}^i=\delta_j^i$.

In the classical ADM formalism, the main evolution equations are first order equations (in $\partial_t$) for $g_{ij},\partial_tg_{ij}$; the second variable corresponding to the second fundamental form of $\Sigma_t$. When expressed in terms of the previous orthonormal frame, $g_{ij}$ correspond to the frame coefficients $f_i{}^j$, while the second fundamental form is now evaluated against the spatial frame components $e_i$:
\begin{align}\label{Kij}
K_{ij}:={\bf g}({\bf D}_{e_i}e_0,e_j)=K_{ji},
\end{align}
where ${\bf D}$ is the Levi-Civita connection of ${\bf g}$. In our framework, $K_{ij},f_i{}^j$ satisfy the reduced equations \eqref{e0Kij}, \eqref{e0fij}. The right-hand-side of \eqref{e0Kij} contains up to two spatial derivatives of $f_i{}^j$, encoded in the Ricci tensor of $g$. However, we find it analytically convenient to expand this term using the spatial connection coefficients
of the frame:\footnote{Although we do not use this anywhere, we note that they can be computed purely in terms of $f_i{}^j$ using the Koszul formula:
\begin{align*}
\Gamma_{ijb}=\frac{1}{2}\bigg[g([e_i,e_j],e_b)-g([e_j,e_b],e_i)+g([e_b,e_i],e_j)\bigg]
=\frac{1}{2}\bigg[f^b{}_l(e_if_j{}^l-e_jf_i{}^l)
-f^i{}_l(e_jf_b{}^l-e_bf_j{}^l)
+f^j{}_l(e_bf_i{}^l-e_if_b{}^l)\bigg]
\end{align*}
}
\begin{align}\label{Gammaijb}
\Gamma_{ijb}:={\bf g}({\bf D}_{e_i}e_j,e_b)=g(D_{e_i}e_j,e_b)=-\Gamma_{ibj},
\end{align}
where $D$ is the Levi-Civita connection of $g$. These then satisfy the propagation equation \eqref{e0Gammaijb}.

At first glance, the system \eqref{e0Kij}-\eqref{e0fij} does not seem to be eligible for an energy estimate, due to the first term in the right-hand-side of \eqref{e0Kij} that renders the system non-symmetric and could lead to a loss of derivatives. This is a well-known problem of the ADM system. One remedy is to consider a harmonic gauge \cite{AndMon} on the slices $\Sigma_t$, which would eliminate this bad term. Another argument was given in \cite{RodSp}, where the authors considered a CMC foliation and made use of the momentum constraint \eqref{momconst}, in order to eliminate any such bad terms in the energy estimates by integrating by parts.\footnote{In \cite{RodSp}, the authors expressed the evolution equation \eqref{e0Kij} in terms of a transported coordinate system $(t,x_1,x_2,x_3)$ and their associated Christoffel symbols. Moreover, they proved a priori energy estimates \emph{assuming} the existence of a solution verifying the constraints, instead of deriving a system for which well-posedness holds, as we do in this paper.} {}{The adoption of such gauges introduces new variables to the system (lapse, shift vector field) that satisfy elliptic equations.}

In contrast, the ADM system can be transformed into a second order system of equations for the second fundamental form of the time slices, expressed in terms of transported coordinates $(t,x^1,x^2,x^3)$. This was first derived in \cite{ChoqRug}, where the authors demonstrated its hyperbolicity under the gauge assumption $\square_gt=0$. It turns out that the second order system for $K$ is also hyperbolic in normal transported coordinates \eqref{metric}, without any additional gauge assumptions, see the framework presented in \cite{FLuk} with an application to asymptotically Kasner-like singularities. Recently, we also used the aforementioned second order system for $K$ (see \cite{FSm}) to analyse the initial boundary value problem for the Einstein vacuum equations in the maximal gauge. 

In the present study, we carry out the analysis in the geodesic gauge presented above, circumventing the apparent loss of derivatives issue (see Lemma \ref{lem:loss}) by making use of both the Hamiltonian and momentum constraints.\footnote{In \cite{FriReu, SarTig}, both the Hamiltonian and momentum constraints were already used to modify the ADM system and obtain well-posedness of the equations in {}{coordinate-based} gauges. The orthonormal frame {}{that we consider in the present article} seems to considerably simplify the analysis of the boundary conditions in our setting.}
More precisely, we prove that by modifying \eqref{e0Kij}-\eqref{e0Gammaijb}, adding appropriate multiples of \eqref{e0Kijnew}-\eqref{e0Gammaijbnew}, 
one obtains a first order symmetric hyperbolic system for the unknowns, see \eqref{e0Kijnew}-\eqref{e0Gammaijbnew}, which is suitable for a local existence argument. In order to {}{facilitate} the propagation of the {}{(anti)}symmetries of $K$ and $\Gamma$, we also {}{(anti)}symmetrize part of the equations. 

In general, once the reduced system is solved, one then recovers the Einstein equations through the Bianchi equations. For a modified system, however, the equations one solves for are not directly equivalent to the vanishing of the components of the Ricci tensor and thus this procedure becomes more complicated.\footnote{See {}{\cite[Appendix A]{SarTig}} for such an example concerning the progragation of constraints in {}{a modified} ADM setting.} It is for this reason that one should make minimal modifications to the reduced equations, since any additional change could complicate even further the final system for the vanishing quantities, making it intractable via energy estimates.  Nonetheless, for the modified system we consider, we are able to recover the full Einstein equations by deriving a hyperbolic system for appropriate combinations of the vanishing quantities {}{(see Lemma \ref{lem:systRic})}. Note that since the connection is obtained by solving the {}{modified} reduced equations, it can only be shown to agree with the Levi-Civita connection at the same time as the recovery of the full Einstein equations {}{(see Section \ref{sec:EVEsol})}. This issue was already present in the approach of \cite{FriedNag} using an orthonormal frame. In particular, it is not known a priori that the torsion of the connection vanishes. Thus, the unknowns in the hyperbolic system used for the recovery of the Einstein equations are the components of the torsion, as well as the components of the Ricci and Riemann tensors, after suitable symmetrizations and modifications. The modifications involve the torsion and are {}{similar\footnote{{}{In \cite{FriedNag}, the authors study the Einstein equations at the level of the Bianchi equations, which results into a different system for the recovery of the Einstein equations.}}} to the modifications used in \cite{FriedNag}. 

Our result on the well-posedness of the Einstein equations in the above framework can then be stated as follows
\begin{theorem}\label{thmB}
The modified reduced system \eqref{e0fij}, \eqref{e0Kijnew}, \eqref{e0Gammaijbnew}, for the frame and connection coefficients is locally well-posed in $L^\infty_tH^s(\Sigma_t)$, for $s\ge3$. Moreover, if the initial data $(\Sigma,h,K)$ satisfy the constraint equations \eqref{Hamconst}-\eqref{momconst}, then the solution to \eqref{e0fij}, \eqref{e0Kijnew}, \eqref{e0Gammaijbnew}, with the induced initial data (see Section \ref{subsec:ID}), induces a solution of \eqref{EVE}.
In particular, the Einstein vacuum equations, cast as a modified ADM system, are locally well-posed. 
\end{theorem}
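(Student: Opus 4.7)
The plan is to treat the two assertions in sequence. For the local well-posedness statement, my strategy is to verify that the modified reduced system \eqref{e0fij}, \eqref{e0Kijnew}, \eqref{e0Gammaijbnew} is a quasilinear first order symmetric hyperbolic system in the unknowns $(f_i{}^j, K_{ij}, \Gamma_{ijb})$. The modifications---adding carefully chosen multiples of the Hamiltonian and momentum constraints to the $K$ and $\Gamma$ evolution equations---are precisely designed to symmetrize the principal part and eliminate the apparent loss of derivatives inherent to the bare ADM system in the geodesic gauge. Once symmetric hyperbolicity is verified, standard theory (Picard iteration together with energy estimates, using Sobolev embedding for $s\ge 3$ to control coefficients) delivers local existence, uniqueness and persistence of regularity in $L^\infty_t H^s(\Sigma_t)$. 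A short separate step propagates the algebraic symmetries $K_{ij}=K_{ji}$ and $\Gamma_{ijb}=-\Gamma_{ibj}$: since the equations have been (anti)symmetrized by construction, their antisymmetric/symmetric violations satisfy closed homogeneous linear transport equations along $\partial_t$, hence remain zero.

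For the recovery of the Einstein vacuum equations I would follow a Friedrich--Nagy-type strategy. I would introduce as error unknowns the components of $\mathrm{Ric}(g)$, the torsion $T$ of the connection defined by $\Gamma$ (not a priori zero, since $\Gamma$ is only known to be compatible with $g$), and suitable components of the Riemann tensor of $\Gamma$. Using the modified evolution equations together with the structure equations and the first and second Bianchi identities, the goal is to derive a closed first order hyperbolic system for an appropriately symmetrized and torsion-modified version of this error package. I would then check that at $t=0$ the entire package vanishes: this is precisely where the hypothesis that $(h,K)$ satisfies \eqref{Hamconst}-\eqref{momconst} enters, complemented by the induced initial data for $\Gamma$, which forces both zero initial torsion and, via Gauss--Codazzi, zero initial values for the spatial Ricci and Riemann errors; the temporal Ricci components vanish initially by the constraints. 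Uniqueness for the hyperbolic error system with zero initial data then propagates the vanishing of all error quantities for $t>0$, yielding simultaneously that $\Gamma$ agrees with the Levi-Civita connection of $g$ and that $\mathrm{Ric}(g)=0$.

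I expect the main obstacle to be the derivation of the closed symmetric hyperbolic system for the error package. Because the evolution equations have been modified by constraint quantities, the naive use of the Bianchi identities does not automatically produce a closed hyperbolic system: the modifications reappear as source terms for the error quantities, and one must add carefully chosen torsion-dependent corrections to the Ricci and Riemann errors so that the principal part is symmetric and the remaining contributions are genuinely lower order and can be absorbed by standard energy estimates. This is the step where the analogy with \cite{FriedNag} is most useful, but also where the corrections must be adapted to our modified ADM formulation rather than to the purely Bianchi-based system of that reference, and where one has to be careful that the propagation of the torsion does not reintroduce top-order terms into the Ricci and Riemann error equations.
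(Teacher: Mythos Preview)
Your proposal is correct and follows essentially the same route as the paper: Lemma~\ref{lem:hypsymm} establishes the symmetric hyperbolicity of the modified system (with Lemma~\ref{lem:symm} propagating the (anti)symmetries exactly as you describe), and Section~\ref{sec:EVEsol} carries out precisely the Friedrich--Nagy-type argument you outline, introducing the torsion-modified curvature $\widehat{\bf R}$ of \eqref{curvhat} and deriving the closed symmetric hyperbolic system of Lemma~\ref{lem:systRic} for the torsion and modified Ricci components. One small refinement: the final error system in the paper involves only $C_{ijb}$, $\widehat{\bf R}_{i0}$, $\widehat{\bf R}_{00}$, and $\widehat{\bf R}_{ij}^{(A)}$ rather than full Riemann components---the cyclic Riemann sums that arise are reduced to Ricci terms via Lemma~\ref{lem:cyclRiem}, which is exactly the kind of structural identity you anticipate needing.
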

\begin{remark}
Note that the geodesic gauge considered here respects the hyperbolicity of the equations. In particular, the usual finite speed of propagation and domain of dependence arguments can be proven in this gauge. {}{Hence,} in the case of the initial boundary value problem, one can localize the analysis near {}{a point on} the boundary, {}{provided that the orthonormal frame we consider is adapted to the boundary. This requirement is verified for vanishing $\chi$ (Lemma \ref{lem:adframe}).} 
\end{remark}

\subsection{Outline}
In Section \ref{sec:Framework}, we set up our modified version of the ADM system. We first {}{formulate} the standard ADM evolution equations in the geodesic gauge (Lemma \ref{lem:evoleq}) and then prove (in Lemma \ref{lem:loss}) a first order energy identity, assuming that the constraints hold. This identity {}{leads} us to the introduction of the modified  evolution equations \eqref{e0Kijnew}-\eqref{e0Gammaijbnew}. The resulting system is then shown to be symmetric hyperbolic in Lemma \ref{lem:hypsymm}. {}{Although its local well-posedness follows from standard arguments, {}{to simplify the treatment of the IBVP}, we
establish localized energy estimates in Section \ref{se:lwpde} (cf~Proposition \ref{prop:locex}), using the structure of the commuted equations identified in Section \ref{subsec:diffsyst}.} In Section \ref{subsec:ID}, we briefly describe how to derive the initial data for the reduced system from the geometric initial data. 

Section \ref{sec:totgeodbd} is devoted to the initial boundary value problem for the modified ADM system. First, in Section \ref{lem:bcgf}, using in particular that {}{our} geodesic {}{frame is} adapted to the totally geodesic boundary (Lemma \ref{lem:adframe}), we express the {}{vanishing of $\chi$} in terms of the frame components of $K$ and $\Gamma$. We then prove, in Section \ref{se:lwibvp}, the local well-posedness of the initial boundary value problem under our choice of boundary conditions. The main difficulty here arises from a loss {}{in the control of} the {}{normal} derivatives to the boundary, cf~Remark \ref{rk:pibvp}, {}{forcing} us to the introduction of anisoptropic Sobolev spaces. 

Finally, in Section \ref{sec:EVEsol}, we show that {}{once} a solution to the reduced system has been obtained, our framework allows for the recovery of the Einstein vacuum equations, both for the standard Cauchy problem and in the presence of a totally geodesic boundary, thus completing the proofs of Theorems \ref{thmA} and \ref{thmB}. The starting point is to introduce the Lorentzian metric and the connection associated to a solution of the reduced equations. One easily verifies that the connection is compatible with the metric, {}{by virtue of the propagation of the antisymmetry of the spatial connection coefficients $\Gamma_{ijb}$} (see Lemma \ref{lem:symm}). On the other hand, the connection is not a priori torsion free and therefore, does not a priori agree with the Levi-Civita {}{of} the metric. {}{We first} derive various geometrical identities such as the Bianchi equations and the Gauss-Codazzi equations, in the presence of torsion (cf~Lemma \ref{lem:Dtilde}). Since the resulting equations are not suitable to propagate the constraints, we consider modified Riemann and Ricci curvature tensors \eqref{curvhat}, both for the spacetime geometry and the geometry of the time slices, the modifications depending on the torsion {}{(cf. \cite[Section 6]{FriedNag})}. The symmetries of these modified curvatures are studied in Lemma \ref{lem:Dhat} and \ref{lem:cyclRiem}. {}{Then, we prove that they} lead to a symmetric hyperbolic system \eqref{e0Cijb}-\eqref{Rijeq} for the modified spacetime Ricci curvature components and the torsion. Finally, we {}{show that the boundary conditions satisfied by the solution to the modified ADM system, which are in turn induced by the vanishing of $\chi$ (see Lemma \ref{lem:bdcond}), imply boundary conditions for the modified spacetime Ricci curvature (Lemma \ref{lem:bdcond.Ric})} that are suitable for an energy estimate. {}{The final argument for the recovery of the Einstein equations, both for the Cauchy problem and in the case a totally geodesic timelike boundary, is presented in Section \ref{subsec:finstep}. }

\subsection{Notation}

We will in general use Greek letters $\alpha,\beta,\gamma,\mu,\nu$ for indices ranging from $0$ to $3$, Latin letters $i,j,l,a,b,c$ etc, as spatial indices $1,2,3$, and capital letters $A,B$ for the indices $1,2$ (which correspond below to spacelike vector fields tangential to the boundary). Whenever the Einstein summation is used, the range of the sum will be that of the specific indices. All tensors throughout the paper are evaluated against an orthonormal frame $\{e_\mu\}^3_0$. In particular, we raise and lower indices using $m_{ab}=\mathrm{diag}(-1,1,1,1)$. For example, $e^b=e_b$, $e^0=-e_0$.

\subsection{Acknowledgements}
G.F. would like to thank Jonathan Luk for useful discussions. We would also like to thank M.T.~Anderson for several interesting comments on our work. 
Both authors are supported by the \texttt{ERC grant 714408 GEOWAKI}, under the European Union's Horizon 2020 research and innovation program.

%
\section{The ADM system in a geodesic gauge}\label{sec:Framework}

In this section we introduce our framework and show that the Einstein vacuum equations (EVE) reduce to a first order symmetric hyperbolic system for the connection coefficients of a parallelly propagated orthonormal frame. For completeness, we confirm its well-posedness in usual $H^s$ spaces.

 \subsection{The modified ADM evolution equations and their hyperbolicity} \label{se:madh}

Let $(\mathcal{M},{\bf g})$ be a $3+1$-dimensional Lorentzian manifold and let $\Sigma_0$ be a Cauchy hypersurface equipped with an orthonormal frame $e_1,e_2,e_3$. Also, let $e_0$ be the future unit normal to $\Sigma_0$. We extend the frame $\{e_\mu\}_0^3$ by parallel propagation along timelike geodesics emanating from $\Sigma_0$ with initial speed $e_0$: 
\begin{align}\label{geodgauge}
{\bf D}_{e_0}e_\mu=0
\end{align}
If $t$ is the proper time parameter of the $e_0$ geodesics, $\{t=0\}=\Sigma_0$, then ${\bf g}$ takes the form \eqref{metric}, where $g$ is the induced metric on $\Sigma_t$, and the {}{transition between $\{e_\mu\}_0^3$ and a transported coordinate system $(t,x_1,x_2,x_3)$ is defined via \eqref{fij}. The connection coefficients of the orthonormal frame are $K_{ij},\Gamma_{ijb}$, defined in \eqref{Kij}, \eqref{Gammaijb}.}

Our convention for the spacetime Riemann, Ricci, and scalar curvatures is
\begin{align}\label{curvconv}
{\bf R}_{\alpha\beta\mu\nu}={\bf g}(({\bf D}_{e_\alpha}{\bf D}_{e_\beta}-{\bf D}_{e_\beta}{\bf D}_{e_\alpha}-{\bf D}_{[e_\alpha,e_\beta]})e_\mu,e_\nu),&&{\bf R}_{\beta\mu}={\bf R}_{\alpha \beta\mu}{}^\alpha,\qquad {\bf R}={\bf R}_\mu{}^\mu 
\end{align}
and similarly for the curvature tensors of $g$, denoted by $R_{ijlb},R_{jl},R$. 
\begin{lemma}\label{lem:GaussCod}
With the above conventions, the Gauss and Codazzi equations for $\Sigma_t$ read:
\begin{align}
\label{Gauss}
{\bf R}_{aijb}=&\,R_{aijb}+K_{ab}K_{ij}-K_{aj}K_{ib},\\
\label{R0ijb}{\bf R}_{0ijb}=&\,D_jK_{bi}-D_bK_{ji},
\end{align}
where 
\begin{align}\label{Raijb}
R_{aijb}=e_a\Gamma_{ijb}-e_i\Gamma_{ajb}-\Gamma_{ab}{}^c\Gamma_{ijc}+\Gamma_{ib}{}^c\Gamma_{ajc}-\Gamma_{ai}{}^c\Gamma_{cjb}+\Gamma_{ia}{}^c\Gamma_{cjb}
\end{align}
\end{lemma}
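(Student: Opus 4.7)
The plan is a direct computation starting from the definition \eqref{curvconv}, using the standard decomposition of the spacetime covariant derivative into intrinsic spatial and normal components. First I would establish three book-keeping identities: from metric compatibility and \eqref{Kij} one has ${\bf g}({\bf D}_{e_i}e_j,e_0) = -K_{ij}$, so combining with the definition \eqref{Gammaijb} gives the orthogonal decomposition
\begin{equation*}
{\bf D}_{e_i}e_j = D_{e_i}e_j + K_{ij}e_0 = \Gamma_{ij}{}^c e_c + K_{ij}e_0, \qquad {\bf D}_{e_i}e_0 = K_i{}^b e_b,
\end{equation*}
together with the parallel propagation condition ${\bf D}_{e_0}e_\mu = 0$ coming from \eqref{geodgauge}. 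I would also record the commutators $[e_a,e_i] = (\Gamma_{ai}{}^c - \Gamma_{ia}{}^c)e_c$, which is valid because $D$ is torsion-free, and $[e_0,e_i] = -K_i{}^c e_c$.

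Formula \eqref{Raijb} is then immediate: plug $D_{e_i}e_j = \Gamma_{ij}{}^c e_c$ into $R(e_a,e_i)e_j = D_{e_a}D_{e_i}e_j - D_{e_i}D_{e_a}e_j - D_{[e_a,e_i]}e_j$, expand using the Leibniz rule, and take the $e_b$-component; the contribution of the commutator $[e_a,e_i]$ is precisely the last two terms of \eqref{Raijb}. For the Gauss equation \eqref{Gauss}, I would compute ${\bf R}(e_a,e_i)e_j$ with the spacetime decomposition above; the purely $\Gamma$-quadratic and $\Gamma$-derivative terms reproduce $R_{aijb}$, while the $K$-dependent terms arising from ${\bf D}_{e_a}(K_{ij}e_0) = (e_a K_{ij})e_0 + K_{ij}K_a{}^b e_b$ combine, after subtraction of the $a \leftrightarrow i$ counterpart and cancellation of the transverse $e_a K_{ij}$ pieces against the commutator contribution, into $K_{ab}K_{ij} - K_{aj}K_{ib}$, which is the desired expression.

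For the Codazzi equation \eqref{R0ijb}, I would use the pair-swap symmetry ${\bf R}_{0ijb} = {\bf R}_{jb0i}$ and compute ${\bf R}(e_j,e_b)e_0$ instead, since this avoids the $e_0$-derivatives that appeared in the Gauss computation (making the parallel propagation condition irrelevant here). Expanding ${\bf D}_{e_j}{\bf D}_{e_b}e_0 = {\bf D}_{e_j}(K_b{}^c e_c) = (e_j K_b{}^c)e_c + K_b{}^c(\Gamma_{jc}{}^d e_d + K_{jc}e_0)$, subtracting the $j \leftrightarrow b$ version, and absorbing the contribution from $-{\bf D}_{[e_j,e_b]}e_0 = -(\Gamma_{jb}{}^c - \Gamma_{bj}{}^c)K_c{}^d e_d$, the $e_0$-parallel terms cancel by symmetry of $K$, while the spatial components reorganize into $D_j K_{bi} - D_b K_{ji}$ upon using $D_j K_{bi} = e_j K_{bi} - \Gamma_{jb}{}^c K_{ci} - \Gamma_{ji}{}^c K_{bc}$.

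The only genuine obstacle is careful index bookkeeping: tracking which terms are killed by (anti)symmetrization over $(a,i)$ or $(j,b)$, and separating the intrinsic $D$-calculus on $(\Sigma_t,g)$ from the spacetime $\bf D$-calculus, especially since the frame is non-coordinate and the commutator terms carry a nontrivial $\Gamma$-antisymmetric piece that must be matched with the covariant derivatives of $K$. No new ideas beyond the standard derivation of Gauss--Codazzi are required—just its adaptation to a moving orthonormal frame in the geodesic gauge.
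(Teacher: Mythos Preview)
Your proposal is correct and follows essentially the same route as the paper: the same decomposition ${\bf D}_{e_i}e_j = D_{e_i}e_j + K_{ij}e_0$, ${\bf D}_{e_i}e_0 = K_i{}^b e_b$, the same commutator formula $[e_a,e_i]=(\Gamma_{ai}{}^c-\Gamma_{ia}{}^c)e_c$, and the same pair-swap trick ${\bf R}_{0ijb}={\bf R}_{jb0i}$ for the Codazzi equation. One small clarification: in the Gauss computation the ``transverse'' $e_aK_{ij}$ pieces do not need to cancel against the commutator---they lie in the $e_0$-direction and simply drop out when you pair with $e_b$; similarly in the Codazzi computation the $e_0$-components are irrelevant since you project onto $e_i$.
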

\begin{proof}
We employ the formulas $${\bf D}_{e_i}e_j=D_{e_i}e_j+K_{ij}e_0,\qquad {\bf D}_{e_b}e_0=K_b{}^ce_c,\qquad [e_j,e_b]=D_{e_j}e_b-D_{e_b}e_j=(\Gamma_{jb}{}^c-\Gamma_{bj}{}^c)e_c$$ 
to compute
\begin{align*}
{\bf R}_{aijb}=&\,{\bf g}(({\bf D}_{e_a}{\bf D}_{e_i}-{\bf D}_{e_i}{\bf D}_{e_a}-{\bf D}_{[e_a,e_i]})e_j,e_b)\\
=&\,{\bf g}({\bf D}_{e_a}(D_{e_i}e_j+K_{ij}e_0)-{\bf D}_{e_i}(D_{e_a}e_j+K_{aj}e_0)-D_{[e_a,e_i]}e_j,e_b)\\
=&\,g(D_{e_a}D_{e_i}e_j-D_{e_i}D_{e_a}e_j-D_{[e_a,e_i]}e_j,e_b)+K_{ij}K_{ab}-K_{aj}K_{ib},
\end{align*}
\begin{align*}
{\bf R}_{0ijb}={\bf R}_{jb0i}=&\,{\bf g}(({\bf D}_{e_j}{\bf D}_{e_b}-{\bf D}_{e_b}{\bf D}_{e_j}-{\bf D}_{[e_j,e_b]})e_0,e_i)\\
=&\,{\bf g}({\bf D}_{e_j}(K_b{}^ce_c)-{\bf D}_{e_b}(K_j{}^ce_c),e_i)-(\Gamma_{jb}{}^c-\Gamma_{bj}{}^c)K_{ci}\\
=&\,e_jK_{bi}+K_b{}^c\Gamma_{jci}-e_bK_{ji}-K_j{}^c\Gamma_{bci}-(\Gamma_{jb}{}^c-\Gamma_{bj}{}^c)K_{ci}
\end{align*}
and
\begin{align*}
R_{aijb}=&\,g((D_{e_a}D_{e_i}-D_{e_i}D_{e_a}-D_{[e_a,e_i]})e_j,e_b)\\
=&\,g(D_{e_a}(\Gamma_{ij}{}^ce_c),e_b)-g(D_{e_i}(\Gamma_{aj}{}^ce_c),e_b)-(\Gamma_{ai}{}^c-\Gamma_{ia}{}^c)g(D_{e_c}e_j,e_b)\\
=&\,e_a\Gamma_{ijb}+\Gamma_{ij}{}^c\Gamma_{acb}-e_i\Gamma_{ajb}-\Gamma_{aj}{}^c\Gamma_{icb}-(\Gamma_{ai}{}^c-\Gamma_{ia}{}^c)\Gamma_{cjb}
\end{align*}
which can be seen to correspond to the asserted formulas by using the antisymmetry of $\Gamma_{ijb}$ in $(j;b)$.
\end{proof}
\begin{lemma}\label{lem:evoleq}
The scalar functions $K_{ij},\Gamma_{ijb},f_i{}^j,f^b{}_j$ satisfy the following evolution equations:
\begin{align}
e_0K_{ij}+\mathrm{tr}KK_{ij}=&-R_{ij}^{{}{(S)}}+{\bf R}_{ij}^{{}{(S)}}, \notag\\
=&\,\frac{1}{2}\bigg[e_i\Gamma^b{}_{jb}-e^b\Gamma_{ijb} \label{e0Kij}
{}{+\Gamma^b{}_i{}^c\Gamma_{cjb}+\Gamma^b{}_b{}^c\Gamma_{ijc}}\\
\notag&+e_j\Gamma^b{}_{ib}-e^b\Gamma_{jib}
{}{+\Gamma^b{}_j{}^c\Gamma_{cib}+\Gamma^b{}_b{}^c\Gamma_{jic}}\bigg]+{\bf R}_{ij}^{{}{(S)}}\\
\notag
e_0\Gamma_{ijb}+{K_i}^c\Gamma_{cjb}=&\,D_jK_{bi}-D_bK_{ji}\\
=&\,e_jK_{bi}-e_bK_{ji}-\Gamma_{jb}{}^cK_{ci}-\Gamma_{ji}{}^cK_{bc}+\Gamma_{bj}{}^cK_{ci}+\Gamma_{bi}{}^cK_{jc} \label{e0Gammaijb} \\
\label{e0fij}e_0f_i{}^j+K_i{}^cf_c{}^j=&\,0\\
\label{e0fijinv}e_0f^b{}_j-K_c{}^bf^c{}_j=&\,0
\end{align}
for all indices $i,j,b=1,2,3$, where 
\begin{align}\label{RicS}
R^{{}{(S)}}_{ij}:=\frac{1}{2}(R_{ij}+R_{ji}),\qquad{\bf R}_{ij}^{{}{(S)}}:=\frac{1}{2}({\bf R}_{ij}+{\bf R}_{ji})
\end{align} 
\end{lemma}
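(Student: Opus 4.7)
The plan is to derive each evolution equation by applying $e_0$ to the defining formula for the corresponding quantity, exploiting the parallel propagation ${\bf D}_{e_0} e_\mu = 0$ of the frame together with the curvature identities of Lemma~\ref{lem:GaussCod}.

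For the $K_{ij}$-equation, starting from $K_{ij} = {\bf g}({\bf D}_{e_i}e_0, e_j)$ and metric compatibility, parallel propagation kills the cross term and leaves $e_0 K_{ij} = {\bf g}({\bf D}_{e_0}{\bf D}_{e_i}e_0, e_j)$. I would then commute the two covariant derivatives using \eqref{curvconv}, noting that ${\bf D}_{e_0}e_0 = 0$ and that $[e_0, e_i] = {\bf D}_{e_0}e_i - {\bf D}_{e_i}e_0 = -K_i{}^c e_c$. This yields the Riccati-type identity
\[
e_0 K_{ij} = {\bf R}_{0i0j} - K_i{}^c K_{cj}.
\]
To eliminate ${\bf R}_{0i0j}$ in favor of the spatial Ricci, I would trace the Gauss equation \eqref{Gauss}, keeping track of the Lorentzian signature through ${\bf R}_{ij} = {\bf R}_{aij}{}^a - {\bf R}_{0i0j}$, obtaining ${\bf R}_{ij} - {\bf R}_{0i0j} = R_{ij} + \mathrm{tr}K\, K_{ij} - K_i{}^c K_{cj}$. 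Substitution produces the compact form $e_0 K_{ij} + \mathrm{tr}K\, K_{ij} = -R_{ij} + {\bf R}_{ij}$. Both sides are automatically symmetric in $(i,j)$ (by the symmetry of $K$ and the pair-exchange symmetry of ${\bf R}$), so passing to the symmetrized versions of \eqref{RicS} is harmless; the explicit expanded form then follows by substituting \eqref{Raijb} for $R_{aija}$, summing over $a$, and grouping terms after $(i,j)$-symmetrization.

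The $\Gamma_{ijb}$-equation proceeds in exactly the same fashion, starting from $\Gamma_{ijb} = {\bf g}({\bf D}_{e_i}e_j, e_b)$: commuting ${\bf D}_{e_0}$ and ${\bf D}_{e_i}$ gives $e_0 \Gamma_{ijb} = {\bf R}_{0ijb} - K_i{}^c \Gamma_{cjb}$, and the Codazzi identity \eqref{R0ijb} then rewrites the curvature term as $D_j K_{bi} - D_b K_{ji}$; the expanded form just unfolds the Levi-Civita derivatives $D_j K_{bi} = e_j K_{bi} - \Gamma_{jb}{}^c K_{ci} - \Gamma_{ji}{}^c K_{bc}$ and likewise for $D_b K_{ji}$. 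For the frame coefficients, I would compute $[e_0, e_i]$ in two ways: as $[\partial_t, f_i{}^j \partial_j] = (e_0 f_i{}^j)\, \partial_j$ since coordinate vector fields commute, and as $-K_i{}^c e_c = -K_i{}^c f_c{}^j \partial_j$ from the bracket already computed above. Equating the coefficients of $\partial_j$ yields \eqref{e0fij}, and \eqref{e0fijinv} is obtained by differentiating the duality $f^b{}_j f_b{}^i = \delta_j^i$ and substituting back.

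No step is genuinely difficult; the only point requiring care is the sign in the Ricci trace, where the $-{\bf R}_{0i0j}$ contribution coming from $e^0 = -e_0$ is precisely what converts the Riccati source term $+{\bf R}_{0i0j}$ into the spatial $-R_{ij}$ plus the characteristic $+\mathrm{tr}K\, K_{ij}$ drift term on the left-hand side of \eqref{e0Kij}. Beyond this, the derivation is routine index manipulation applied to the definitions and to the curvature identities already established in Lemma~\ref{lem:GaussCod}.
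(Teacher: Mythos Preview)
Your proposal is correct and mirrors the paper's proof: second variation for $K$, the traced Gauss equation to convert ${\bf R}_{0i0j}$ into $-R_{ij}+{\bf R}_{ij}$, Codazzi for $\Gamma$, and the bracket $[e_0,e_i]$ computed two ways for the frame coefficients. One cosmetic slip: the Ricci decomposition should read ${\bf R}_{ij}={\bf R}_{aij}{}^a+{\bf R}_{0i0j}$ (the sign flips from $e^0=-e_0$ and from antisymmetry in the last two indices cancel), though your subsequent formula ${\bf R}_{ij}-{\bf R}_{0i0j}=R_{ij}+\mathrm{tr}K\,K_{ij}-K_i{}^cK_{cj}$ is already the correct one.
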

\begin{remark}The Ricci tensor associated to the Levi-Civita connection is always symmetric, and thus $R^{{}{(S)}}_{ij}=R_{ij}$ in this case. However, in order to establish local well-posedness, we will construct the connection from modified equations below and it will no longer hold a priori that $R_{ij}$ or $K_{ij}$ are symmetric, unless we expand the right-hand side of \eqref{e0Kij} in terms of the symmetrised Ricci tensor $R_{ij}^{{}{(S)}}$. In this form, the symmetry of $K_{ij}$ is automatically propagated, provided it is valid initially. 
\end{remark}
\begin{proof}
The propagation condition \eqref{geodgauge} implies the second variation equation
\begin{align}\label{2ndvar}
\notag{\bf R}_{0i0j}=&\, \bg((\bD_{e_0} \bD_{e_i} - \bD_{e_i} \bD_{e_0} -\bD_{[e_0, e_i]}) e_0, e_j ) =\bg (  \bD_{e_0} ( K_{i}{}^c e_c)   -\bD_{( \bD_{e_0} e_i - \bD_{e_i}e_0 ) } e_0, e_j )\\
=&\, e_0 K_{ij}  + K_{i}{}^b \bg ( \bD_{ e_b } e_0, e_j ) 
= e_0K_{ij}+{K_i}^bK_{bj}.
\end{align}
Utilising \eqref{Gauss} we have
\begin{align}\label{R0i0jGauss}
{\bf R}_{0i0j}=-{\bf R}_{0ij0}={\bf R}_{ij}-{\bf R}_{bij}{}^b=
{\bf R}_{ij}-R_{ij}-\text{tr}KK_{ij}+{K_i}^bK_{jb}
\end{align} 
On the other hand, contracting \eqref{Raijb} in $(a;b)$ gives
\begin{align}\label{spatialRij}
-R_{ij}=-R_{bij}{}^b
\notag=&\,e_i\Gamma^b{}_{jb}-e^b\Gamma_{ijb}
+\Gamma^b{}_i{}^c\Gamma_{cjb}-\Gamma_i{}^{bc}\Gamma_{cjb}+\Gamma^b{}_b{}^c\Gamma_{ijc}-\Gamma_i{}^{bc}\Gamma_{bjc}\\
{}{=}&\,{}{e_i\Gamma^b{}_{jb}-e^b\Gamma_{ijb}
+\Gamma^b{}_i{}^c\Gamma_{cjb}+\Gamma^b{}_b{}^c\Gamma_{ijc}}\\
\notag=&\,e_j\Gamma^b{}_{ib}-e^b\Gamma_{jib}
{}{+\Gamma^b{}_j{}^c\Gamma_{cib}+\Gamma^b{}_b{}^c\Gamma_{jic}}=-R_{ji},
\end{align}
where in the last equality we used the symmetry of the Ricci tensor of $g$.
Combining \eqref{2ndvar}, \eqref{R0i0jGauss} and \eqref{spatialRij}, we conclude \eqref{e0Kij}.

By \eqref{geodgauge} and the Codazzi equation \eqref{R0ijb} it follows that
\begin{align}\label{e0Gammaijb2}
\notag e_0\Gamma_{ijb}=&\,{\bf g}({\bf D}_{e_0}{\bf D}_{e_i}e_j,e_b)={\bf R}_{0ijb}+{\bf g}({\bf D}_{e_i}{\bf D}_{e_0}e_j,e_b)+{\bf g}({\bf D}_{[e_0,e_i]}e_j,e_b)\\
=&\,{\bf R}_{0ijb}+{\bf g}\left({\bf D}_{\left( {\bf D}_{e_0}e_i-{\bf D}_{e_i}e_0 \right)}e_j,e_b\right)\\
\notag=&\,D_jK_{bi}-D_bK_{ji}-{K_i}^c\Gamma_{cjb},
\end{align}
which yields \eqref{e0Gammaijb}.

Finally, we have 
\begin{align*}
K_i{}^ce_c={\bf D}_{e_i}e_0={\bf D}_{e_i}e_0-{\bf D}_{e_0}e_i=[e_i,e_0]=[f_i{}^j\partial_j,\partial_t]\qquad\Rightarrow\qquad K_i{}^cf_c{}^j\partial_j=-e_0f_i{}^j\partial_j,
\end{align*}
which implies \eqref{e0fij}. Utilising the relation $f_i{}^jf^b{}_j=\delta^b_i$, we also conclude \eqref{e0fijinv}.
\end{proof}
%
%
%
%
\begin{remark}
Contracting the formula \eqref{spatialRij} and using antisymmetry of $\Gamma_{ijb}$ with respect the last two indices, we notice that the two first order terms combine to give
\begin{align}\label{spatialR}
-R=2e^j\Gamma^b{}_{jb}{}{+\Gamma^{bjc}\Gamma_{cjb}}+\Gamma^b{}_b{}^c\Gamma^j{}_{jc}.
\end{align}
\end{remark}
{}{In the next lemma, we illustrate the structure of the equations \eqref{e0Kij}-\eqref{e0Gammaijb} that we exploit in the local existence argument below, by deriving the main energy identity for $K_{ij},\Gamma_{ijb}$ (at zeroth order). For the moment, we make use of both the Hamiltonian and momentum contraints \eqref{Hamconst}-\eqref{momconst}, i.e., the fact that we have an actual solution to \eqref{EVE}.}
\begin{lemma}\label{lem:loss} 
Let ${\bf g}$ be a solution to the EVE. Then the variables $K_{ij},\Gamma_{ijb}$ satisfy the following identity:
\begin{align}\label{lossid}
\notag&\frac{1}{2}e_0(|K|^2)+\mathrm{tr}K |K|^2+\frac{1}{4}e_0[\Gamma_{ijb}\Gamma^{ijb}]+\frac{1}{2}K_i{}^c\Gamma_{cjb}\Gamma^{ijb}\\
=&\,e_j[K^{ij}\Gamma^b{}_{ib}]-e^i[\mathrm{tr}K\Gamma^b{}_{ib}]-e_b[\Gamma^{ijb}K_{ji}]+\frac{1}{2}\mathrm{tr}K\left[(\mathrm{tr}K)^2-|K|^2{}{+\Gamma^{bic}\Gamma_{cib}+\Gamma^b{}_b{}^c\Gamma^i{}_{ic}}\right]\\
\notag&-\Gamma_j{}^i{}_cK^{cj}\Gamma^b{}_{ib}
-\Gamma_j{}^j{}_cK^{ic}\Gamma^b{}_{ib}+K^{ij}{}{[\Gamma^b{}_j{}^c\Gamma_{cib}+\Gamma^b{}_b{}^c\Gamma_{jic}]}
+{}{\Gamma^{ijb}[\Gamma_{bj}{}^cK_{ci}+\Gamma_{bi}{}^cK_{jc}]},
\end{align}
where $|K|^2=K^{ij}K_{ij}$.
\end{lemma}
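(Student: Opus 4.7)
The plan is to derive the identity by taking appropriate contractions of the evolution equations \eqref{e0Kij} and \eqref{e0Gammaijb}, summing them, and repackaging the derivative terms as a divergence modulo the constraints. Concretely, I would contract \eqref{e0Kij} with $K^{ij}$ and \eqref{e0Gammaijb} with $\tfrac{1}{2}\Gamma^{ijb}$, so that the left-hand sides combine into precisely $\tfrac{1}{2}e_0(|K|^2)+\mathrm{tr}K\,|K|^2+\tfrac{1}{4}e_0(\Gamma_{ijb}\Gamma^{ijb})+\tfrac{1}{2}K_i{}^c\Gamma_{cjb}\Gamma^{ijb}$, matching the LHS of \eqref{lossid}. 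Since we assume ${\bf g}$ solves EVE, the contribution of ${\bf R}^{(S)}_{ij}$ in \eqref{e0Kij} drops out.

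On the right-hand side the dangerous terms are the ``top order'' spatial derivatives: $K^{ij}e_i\Gamma^b{}_{jb}$ and $K^{ij}e^b\Gamma_{ijb}$ coming from \eqref{e0Kij}, and $\Gamma^{ijb}(e_j K_{bi}-e_b K_{ji})$ coming from \eqref{e0Gammaijb}. I would first use the symmetry of $K_{ij}$ to collapse the bracketed expression in \eqref{e0Kij} into $K^{ij}\bigl(e_i\Gamma^b{}_{jb}-e^b\Gamma_{ijb}\bigr)$ (plus lower-order $\Gamma\cdot\Gamma$ terms contracted with $K$), and then apply Leibniz to rewrite each derivative as a total $e_\mu$-derivative minus its ``transpose'', producing the three divergence terms $e_j[K^{ij}\Gamma^b{}_{ib}]$, $e^i[\mathrm{tr}K\,\Gamma^b{}_{ib}]$ and $e_b[\Gamma^{ijb}K_{ji}]$ appearing on the RHS of \eqref{lossid}. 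The cross terms between the two contracted equations should cancel provided one is careful with the antisymmetry $\Gamma_{ijb}=-\Gamma_{ibj}$ and the symmetry $K_{ij}=K_{ji}$.

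The remaining bulk terms will involve $(e_iK^{ij})\Gamma^b{}_{jb}$, $(e_i\mathrm{tr}K)\Gamma^b{}_{ib}$ and analogous expressions with a frame derivative falling on $K$. Here I would convert these frame derivatives into spatial covariant derivatives $D_i$ (introducing Christoffel corrections of the form $\Gamma_j{}^i{}_c K^{cj}\Gamma^b{}_{ib}$ and $\Gamma_j{}^j{}_c K^{ic}\Gamma^b{}_{ib}$, which account for the explicit non-divergence Christoffel terms written on the last line of \eqref{lossid}) and then invoke the momentum constraint \eqref{momconst} to trade $D^iK_{ij}$ for $e_j\mathrm{tr}K$. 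After these substitutions, the surviving purely algebraic terms assemble into the spatial scalar curvature via the contracted identity \eqref{spatialR}, and the Hamiltonian constraint \eqref{Hamconst} then converts this $-R$ into $(\mathrm{tr}K)^2-|K|^2$, weighted by $\mathrm{tr}K$, producing the bracket $\tfrac{1}{2}\mathrm{tr}K[(\mathrm{tr}K)^2-|K|^2+\Gamma^{bic}\Gamma_{cib}+\Gamma^b{}_b{}^c\Gamma^i{}_{ic}]$.

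The main obstacle, as ever with ADM-type energy identities, is bookkeeping: one has to keep track of every Christoffel correction created when passing between $e_i$ and $D_i$ and verify that, after using both constraints, exactly the quadratic $\Gamma\cdot K\cdot\Gamma$ remainders listed in \eqref{lossid} are left over, with no $\Gamma\cdot e K$ or $K\cdot e\Gamma$ terms surviving outside the three total-derivative expressions. The structural reason this works is that the symmetric part of $D_iK_{ij}$ is controlled by the momentum constraint while the antisymmetric part cancels against the antisymmetric piece of $\Gamma_{ijb}$ in \eqref{e0Gammaijb}; this is precisely the cancellation that will be needed to close an energy estimate at higher order in the subsequent analysis.
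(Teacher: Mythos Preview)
Your proposal is correct and follows essentially the same approach as the paper's proof: contract \eqref{e0Kij} with $K^{ij}$ and \eqref{e0Gammaijb} with $\tfrac{1}{2}\Gamma^{ijb}$, observe that the $K^{ij}e^b\Gamma_{ijb}$ terms cancel between the two contracted identities, then rewrite $K^{ij}e_i\Gamma^b{}_{jb}$ via Leibniz, convert $e_iK^{ij}$ to $D_iK^{ij}$ plus Christoffel corrections, apply the momentum constraint \eqref{momconst}, Leibniz again, and finally use \eqref{spatialR} together with the Hamiltonian constraint \eqref{Hamconst}. One minor clarification: of the three divergence terms, only $-e_b[\Gamma^{ijb}K_{ji}]$ arises directly from the $\Gamma$-equation, while $e_j[K^{ij}\Gamma^b{}_{ib}]$ and $-e^i[\mathrm{tr}K\,\Gamma^b{}_{ib}]$ both come from the repeated Leibniz manipulation of the single term $K^{ij}e_i\Gamma^b{}_{jb}$ (the second one only after the momentum constraint has been used), rather than from three separate Leibniz moves.
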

\begin{proof}
Multiplying \eqref{e0Gammaijb} by $\Gamma^{ijb}$ and using its antisymmetry in $(j;b)$ gives the identity 
\begin{align}\label{eq:edg}
\frac{1}{4}e_0[\Gamma_{ijb}\Gamma^{ijb}]+\frac{1}{2}{K_i}^c\Gamma_{cjb}\Gamma^{ijb} =-e_b[\Gamma^{ijb}K_{ji}]+K^{ji}e^b\Gamma_{ijb}
+{}{\Gamma^{ijb}[\Gamma_{bj}{}^cK_{ci}+\Gamma_{bi}{}^cK_{jc}].} 
\end{align}
Multiplying \eqref{e0Kij} by $K^{ij}$ and using its symmetry in $(i;j)$, we also have
\begin{align}
\frac{1}{2}e_0(|K|^2)+\mathrm{tr}K |K|^2 
=K^{ij}e_i\Gamma^b{}_{jb}-K^{ij}e^b\Gamma_{ijb}
+K^{ij}{}{[\Gamma^b{}_i{}^c\Gamma_{cjb}+\Gamma^b{}_b{}^c\Gamma_{ijc}]}. \label{eq:edK}
\end{align}
Notice that the second terms on the right-hand sides of \eqref{eq:edg} and \eqref{eq:edK} are exact opposites, hence, canceling out upon summation of the two identities.

We proceed by rewriting the first term on the right-hand side of \eqref{eq:edK}, making use of both constraint equations \eqref{momconst}-\eqref{Hamconst} in the following manner 
\begin{align*}
K^{ij}e_i\Gamma^b{}_{jb}= &\,e_i[K^{ij}\Gamma^b{}_{jb}]- e_i(K^{ij})\Gamma^b{}_{jb} \\
=&\,e_i[K^{ij}\Gamma^b{}_{jb}]-D_iK^{ij}\Gamma^b{}_{jb}-\Gamma_i{}^i{}_cK^{cj}\Gamma^b{}_{jb}-\Gamma_i{}^j{}_cK^{ic}\Gamma^b{}_{jb}\\
\tag{by \eqref{momconst}}=&\,e_i[K^{ij}\Gamma^b{}_{jb}]-e^j\text{tr}K\Gamma^b{}_{jb}
-\Gamma_i{}^i{}_cK^{cj}\Gamma^b{}_{jb}-\Gamma_i{}^j{}_cK^{ic}\Gamma^b{}_{jb}\\
=&\,e_i[K^{ij}\Gamma^b{}_{jb}]-e^j[\mathrm{tr}K\Gamma^b{}_{jb}]+\text{tr}Ke^j\Gamma^b{}_{jb}
-\Gamma_i{}^i{}_cK^{cj}\Gamma^b{}_{jb}-\Gamma_i{}^j{}_cK^{ic}\Gamma^b{}_{jb}\\
=&\,e_i[K^{ij}\Gamma^b{}_{jb}]-e^j[\mathrm{tr}K\Gamma^b{}_{jb}]\\
\tag{by \eqref{spatialR}}&-\frac{1}{2}\text{tr}K\big[R{}{-\Gamma^{bjc}\Gamma_{cjb}}-\Gamma^b{}_b{}^c\Gamma^j{}_{jc}\big]
-\Gamma_i{}^i{}_cK^{cj}\Gamma^b{}_{jb}-\Gamma_i{}^j{}_cK^{ic}\Gamma^b{}_{jb}\\
\tag{by \eqref{Hamconst}}=&\,e_i[K^{ij}\Gamma^b{}_{jb}]-e^j[\mathrm{tr}K\Gamma^b{}_{jb}]+\frac{1}{2}\text{tr}K\big[(\text{tr}K)^2-|K|^2{}{+\Gamma^{bjc}\Gamma_{cjb}+\Gamma^b{}_b{}^c\Gamma^j{}_{jc}}\big]\\
&-\Gamma_i{}^i{}_cK^{cj}\Gamma^b{}_{jb}-\Gamma_i{}^j{}_cK^{ic}\Gamma^b{}_{jb}
\end{align*}
Combining the above identities, we obtain \eqref{lossid}.  
\end{proof}
Although the differential identity \eqref{lossid} provides a way of deriving a priori estimates for $K_{ij},\Gamma_{ijb}$, the equations \eqref{e0Kij}-\eqref{e0Gammaijb} are still not eligible for a local existence argument, because of the heavy use of the constraint equations in the argument. Indeed, in a local existence proof via a Picard iteration scheme, the constraints are no longer valid off of the initial hypersurface $\Sigma_0$. This implies that a structure similar to the one identified in Lemma \ref{lem:loss} is  no longer present, which leads to a loss of derivatives. 

We remedy this problem by adding appropriate multiples of the constraints in the RHS of the evolution equations \eqref{e0Kij}-\eqref{e0Gammaijb}, resulting to the system: 
\begin{align}
\label{e0Kijnew}
e_0K_{ij}+\text{tr}KK_{ij}=&\,\frac{1}{2}\bigg[e_i\Gamma^b{}_{jb}-e^b\Gamma_{ijb}
+\Gamma^b{}_i{}^c\Gamma_{cjb}{}{+\Gamma^b{}_b{}^c\Gamma_{ijc}}
+e_j\Gamma^b{}_{ib}-e^b\Gamma_{jib}
+\Gamma^b{}_j{}^c\Gamma_{cib}{}{+\Gamma^b{}_b{}^c\Gamma_{jic}}\bigg]\\
\notag&-\frac{1}{2}\delta_{ij}\bigg[2e^a\Gamma^b{}_{ab}{}{+\Gamma^{bac}\Gamma_{cab}}
+\Gamma^b{}_b{}^c\Gamma^a{}_{ac}+|K|^2-(\text{tr}K)^2\bigg]\\
\label{e0Gammaijbnew}e_0\Gamma_{ijb}+K_i{}^c\Gamma_{cjb}=&\,e_jK_{bi}-e_bK_{ji}-\Gamma_{jb}{}^cK_{ci}-\Gamma_{ji}{}^cK_{bc}+\Gamma_{bj}{}^cK_{ci}+\Gamma_{bi}{}^cK_{jc}\\
\notag&{}{+\delta_{ib}\bigg[e^cK_{cj}-\Gamma_c{}^{cl}K_{lj}-\Gamma^c{}_j{}^lK_{cl}-e_j\text{tr}K\bigg]}\\
\notag&{}{-\delta_{ij}\bigg[e^cK_{cb}-\Gamma_c{}^{cl}K_{lb}-\Gamma^c{}_b{}^lK_{cl}-e_b\text{tr}K\bigg]}
\end{align}
\begin{remark}\label{rem:addterm}
Contracting \eqref{Gauss} in $(a;b)$, $(i;j)$, contracting \eqref{R0ijb} in $(i;b)$, and utilising \eqref{spatialR}, we notice that the added expressions in the last lines of \eqref{e0Kijnew}-\eqref{e0Gammaijbnew} correspond to
\begin{align*}
&-\frac{1}{2}\delta_{ij}\bigg[2e^a\Gamma^b{}_{ab}+{}{\Gamma^{bac}\Gamma_{cab}}
+\Gamma^b{}_b{}^c\Gamma^a{}_{ac}+|K|^2-(\text{tr}K)^2\bigg]\\
=&\,\frac{1}{2}\delta_{ij}[R-|K|^2+(\text{tr}K)^2]=\frac{1}{2}\delta_{ij}[{\bf R}+2{\bf R}_{00}],\\
&{}{\delta_{ib}\bigg[e^cK_{cj}-\Gamma_c{}^{cl}K_{lj}-\Gamma^c{}_j{}^lK_{cl}-e_j\text{tr}K\bigg]-\delta_{ij}\bigg[e^cK_{cb}-\Gamma_c{}^{cl}K_{lb}-\Gamma^c{}_b{}^lK_{cl}-e_b\text{tr}K\bigg]}\\
=&\,\delta_{ib}\bigg[D^cK_{cj}-e_j\text{tr}K\bigg]-\delta_{ij}\bigg[D^cK_{cb}-e_b\text{tr}K\bigg]=\delta_{ib}{\bf R}_{0j}-\delta_{ij}{\bf R}_{0b},
\end{align*}
which are indeed multiples of the Hamiltonian and momentum constraints \eqref{Hamconst}, \eqref{momconst}.
\end{remark}
By definition of the initial data, cf~Section \ref{subsec:ID}, $K_{ij}=K_{ji}$, $\Gamma_{ijb}=-\Gamma_{ibj}$, $f_i{}^jf^b{}_j=\delta^b_i$, $f^b{}_jf_b{}^i=\delta^i_j$ will be valid initially for any solution and the same can be imposed for any iterate in a Picard iteration scheme. 
\begin{lemma}\label{lem:symm}
A solution $K_{ij},\Gamma_{ijb},f_i{}^j,f^b{}_j$ to \eqref{e0Kijnew}, \eqref{e0Gammaijbnew}, \eqref{e0fij}, \eqref{e0fijinv} satisfies the properties $K_{ij}=K_{ji}$, $\Gamma_{ijb}=-\Gamma_{ibj}$, $f_i{}^jf^b{}_j=\delta^b_i$, $f^b{}_jf_b{}^i=\delta^i_j$, provided they hold true initially.
\end{lemma}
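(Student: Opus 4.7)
The plan is to propagate each of the four properties by deriving a homogeneous linear evolution equation (in $e_0$) for a suitably defined deviation tensor, and invoking uniqueness / Grönwall. Since the initial value of each deviation vanishes by assumption, the deviation must then vanish identically, yielding the desired symmetries.

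\emph{Symmetry of $K_{ij}$.} I would introduce the antisymmetric part $A_{ij}:=K_{ij}-K_{ji}$ and inspect the right-hand side of \eqref{e0Kijnew}. The first bracketed expression has been manifestly symmetrised in $(i;j)$ by the authors, and the added constraint term is proportional to $\delta_{ij}$, hence symmetric. Swapping $i\leftrightarrow j$ in \eqref{e0Kijnew} and subtracting therefore kills the right-hand side entirely, giving
\[
e_0 A_{ij}+\mathrm{tr}K\,A_{ij}=0.
\]
This is a homogeneous linear ODE for $A_{ij}$ along the $e_0$-integral curves, and since $A_{ij}\equiv 0$ on $\Sigma_0$, we conclude $A_{ij}\equiv 0$.

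\emph{Antisymmetry of $\Gamma_{ijb}$ in $(j;b)$.} Set $S_{ijb}:=\Gamma_{ijb}+\Gamma_{ibj}$. I would verify by direct inspection that the right-hand side of \eqref{e0Gammaijbnew} is antisymmetric under $j\leftrightarrow b$: the derivative terms $e_jK_{bi}-e_bK_{ji}$ are manifestly antisymmetric, the six $\Gamma K$ terms split into three pairs each antisymmetric in $(j;b)$, and the constraint-modification $\delta_{ib}[\cdots_j]-\delta_{ij}[\cdots_b]$ is antisymmetric by construction. Symmetrising \eqref{e0Gammaijbnew} in $(j;b)$ then produces the closed homogeneous equation
\[
e_0 S_{ijb}+K_i{}^c S_{cjb}=0,
\]
which, together with $S_{ijb}|_{\Sigma_0}=0$, yields $S_{ijb}\equiv 0$ by a Grönwall argument applied along each $e_0$-geodesic.

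\emph{The frame identities.} For $P_i{}^b:=f_i{}^jf^b{}_j$, I combine \eqref{e0fij} and \eqref{e0fijinv} to compute
\[
e_0 P_i{}^b=-K_i{}^c P_c{}^b+K_c{}^b P_i{}^c.
\]
Writing $P_i{}^b=\delta_i^b+Q_i{}^b$, the inhomogeneous terms $-K_i{}^b+K_i{}^b$ cancel, leaving a homogeneous linear ODE for $Q_i{}^b$ with vanishing initial data; hence $Q_i{}^b\equiv 0$. The same argument applied to $\tilde{P}^b{}_j{}^i:=f^b{}_j f_b{}^i$ gives the second frame identity.

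\emph{Main obstacle.} The proof is essentially structural rather than computational, and the only genuine check is the combinatorial verification of the symmetry properties of the right-hand sides of the \emph{modified} equations \eqref{e0Kijnew}--\eqref{e0Gammaijbnew}. This is precisely why the authors symmetrised the curvature term in \eqref{e0Kijnew} (via $R^{(S)}_{ij}$) and added the constraint modifications through the symmetric tensor $\delta_{ij}$ and the antisymmetric combination $\delta_{ib}[\cdots_j]-\delta_{ij}[\cdots_b]$: these specific forms preserve the algebraic symmetries that would otherwise be broken by a naive modification, so the linearised system for $(A_{ij},S_{ijb},Q_i{}^b)$ closes without any need to invoke the constraints or the Einstein equations.
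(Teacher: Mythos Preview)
Your proposal is correct and follows essentially the same approach as the paper: both derive homogeneous linear ODEs along $e_0$ for the deviation quantities $K_{ij}-K_{ji}$, $\Gamma_{ijb}+\Gamma_{ibj}$, $f_i{}^jf^b{}_j-\delta_i^b$, $f^b{}_jf_b{}^i-\delta^i_j$ and conclude from vanishing initial data. Your write-up is somewhat more explicit about \emph{why} the right-hand sides of \eqref{e0Kijnew}--\eqref{e0Gammaijbnew} have the required (anti)symmetry, but the argument is otherwise identical.
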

\begin{proof}
The variables $K_{ij}-K_{ji}$, $\Gamma_{ijb}+\Gamma_{ibj}$, $f_i{}^jf^b{}_j-\delta^b_i$, $f^b{}_jf_b{}^i-\delta^i_j$ satisfy the following homogeneous ODE system with trivial initial data: 
\begin{align*}
e_0(K_{ij}-K_{ji})+\text{tr}K(K_{ij}-K_{ji})=0&\\
e_0(\Gamma_{ijb}+\Gamma_{ibj})+K_i{}^c(\Gamma_{cjb}+\Gamma_{cbj})=0&\\
e_0(f_i{}^jf^b{}_j-\delta^b_i)+K_i{}^c(f_c{}^jf^b{}_j-\delta^b_c)-K_c{}^b(f_i{}^jf^c{}_j-\delta^c_i)=0&\\
e_0(f^b{}_jf_b{}^i-\delta^i_j)=0&
\end{align*}
This implies that they must be identically zero.
\end{proof}
{}{
\begin{lemma}\label{lem:hypsymm}
The equations \eqref{e0Kijnew}, \eqref{e0Gammaijbnew} constitute a first order symmetric hyperbolic system.
\end{lemma}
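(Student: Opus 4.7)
The plan is to read off the principal part of the system for the unknowns $U=(K_{ij},\Gamma_{ijb})$ (with $K$ symmetric and $\Gamma$ antisymmetric in the last two indices), exhibit a natural inner product on $U$, and verify that the resulting symbol $\xi_l A^l$ is symmetric by showing that the pairing of each equation against the dual variable produces a sum of total $e_l$-derivatives. Since the $A^0$-block is the identity (both evolution equations are of the form $e_0 U=\cdots$), this is enough to conclude symmetric hyperbolicity.

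The first step is to isolate the principal part. In \eqref{e0Kijnew} the spatial derivatives of $\Gamma$ entering the right-hand side are
\[
\tfrac12\bigl[e_i\Gamma^b{}_{jb}-e^b\Gamma_{ijb}+e_j\Gamma^b{}_{ib}-e^b\Gamma_{jib}\bigr]-\delta_{ij}\,e^a\Gamma^b{}_{ab},
\]
while \eqref{e0Gammaijbnew} involves at principal order only derivatives of $K$, namely
\[
e_jK_{bi}-e_bK_{ji}+\delta_{ib}\bigl[e^cK_{cj}-e_j\mathrm{tr}K\bigr]-\delta_{ij}\bigl[e^cK_{cb}-e_b\mathrm{tr}K\bigr].
\]
In particular, there is no $K$-derivative on the RHS of \eqref{e0Kijnew} nor $\Gamma$-derivative on the RHS of \eqref{e0Gammaijbnew}; symmetric hyperbolicity therefore reduces to checking that the two cross operators are formal adjoints of one another with respect to the natural pairing
\[
\langle U,U\rangle \;=\; K^{ij}K_{ij}+\tfrac12\,\Gamma^{ijb}\Gamma_{ijb}.
\]

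The second step is the algebraic check, organized by the three types of contractions appearing in the equations. Setting $t_j:=\Gamma^b{}_{jb}$ and using $\Gamma_{ijb}=-\Gamma_{ibj}$, one computes $\Gamma^{iji}=t_j$ and $\Gamma^{iib}=-t_b$. Pairing \eqref{e0Kijnew} with $K^{ij}$ (using $K^{ij}=K^{ji}$ to collapse the symmetrized RHS) and \eqref{e0Gammaijbnew} with $\tfrac12\Gamma^{ijb}$ (using the antisymmetry in $(j;b)$), the principal cross-terms become
\begin{align*}
&\bigl[-K^{ij}e_b\Gamma_{ijb}+\Gamma^{ijb}e_jK_{bi}\bigr]+\bigl[K^{ij}e_i t_j+t_j e^cK_{cj}\bigr]+\bigl[-\mathrm{tr}K\,e^a t_a-t_j e_j\mathrm{tr}K\bigr].
\end{align*}
Using the antisymmetry of $\Gamma$ in $(j;b)$ together with the symmetry of $K$ in $(i;j)$, one sees $\Gamma^{ijb}e_jK_{bi}=-\Gamma^{ijb}e_bK_{ij}$, so the first bracket equals $-e_b(K^{ij}\Gamma_{ijb})$. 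The second bracket equals $e_i(K^{ij}t_j)$, and the third equals $-e^j(\mathrm{tr}K\,t_j)$. Each is a total $e_l$-derivative, which is exactly the statement that the matrix $\xi_l A^l$ acting on $U$ is symmetric in the pairing $\langle\cdot,\cdot\rangle$. Combined with $A^0=\mathrm{Id}$, this establishes the claim.

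The only subtlety, and what I expect to be the main computational task, is tracking the constraint-induced trace terms proportional to $\delta_{ij}$ and $\delta_{ib}$: they produce contributions involving $t_j$ and $\mathrm{tr}K$ that do not pair with the ``main'' terms $e_jK_{bi}$ and $e_i\Gamma^b{}_{jb}$, but rather with each other. The verification above shows that the $\delta_{ij}$ piece in \eqref{e0Kijnew} precisely cancels, up to a total derivative, against the $\delta_{ib}$ and $\delta_{ij}$ pieces in \eqref{e0Gammaijbnew}, which is exactly why the constraint modifications identified in Remark \ref{rem:addterm} had to be included simultaneously to restore symmetry. Once this bookkeeping is done, symmetric hyperbolicity follows immediately, and local well-posedness in $H^s$, $s\ge 3$, is then a standard consequence.
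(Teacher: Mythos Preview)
Your proof is correct and essentially reproduces the total-derivative identity that the paper establishes separately in Lemma~\ref{lem:loss2}; by showing that the principal cross-terms combine into $e_l$-derivatives under the pairing $K^{ij}K_{ij}+\tfrac12\Gamma^{ijb}\Gamma_{ijb}$, you verify symmetry of the symbol. The paper's own proof of this lemma takes a different, more hands-on route: it writes out the full linearised system component by component---fifteen scalar equations for the six independent $K_{ij}$ and nine independent $\Gamma_{ijb}$---and simply asserts that one can check by inspection that the resulting $A^l$ matrices are symmetric. Your argument is more conceptual and has the advantage of making the role of the constraint modifications transparent; the paper's explicit listing, on the other hand, is not idle: the component form \eqref{hypsymm} is reused verbatim in the proof of Proposition~\ref{prop:bdlocex} to identify the ``good'' and ``bad'' sets \eqref{def:gs}--\eqref{def:bs} and the schematic equations \eqref{e0PsiG}--\eqref{e0PsiB} governing the recovery of normal derivatives in the IBVP.
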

\begin{proof}
It suffices to look at the linearised equations around zero:\footnote{{}{In fact, the system \eqref{hypsymm} corresponds exactly to \eqref{e0Kijnew}-\eqref{e0Gammaijbnew} up to zeroth order terms.}}
\begin{align}\label{hypsymm}
\notag e_0K_{11}=e_3\Gamma_{223}-e_2\Gamma_{323},\qquad
2e_0K_{12}=-e_3\Gamma_{123}-e_3\Gamma_{213}+e_1\Gamma_{323}+e_2\Gamma_{313},\\
\notag e_0K_{22}=e_3\Gamma_{113}-e_1\Gamma_{313},\qquad
2 e_0K_{13}=e_3\Gamma_{212}+e_2\Gamma_{123}
-e_1\Gamma_{223}-e_2\Gamma_{312},\\
 e_0K_{33}=e_2\Gamma_{112}-e_1\Gamma_{212},\qquad
 2 e_0K_{23}=-e_3\Gamma_{112}+e_1\Gamma_{213}
-e_2\Gamma_{113}+e_1\Gamma_{312},\\
\notag e_0\Gamma_{113}=e_3K_{22}-e_2K_{23},\qquad
 e_0\Gamma_{223}=e_3K_{11}-e_1K_{13},\qquad
\notag e_0\Gamma_{123}=-e_3K_{12}+e_2K_{13},\\
\notag e_0\Gamma_{213}=-e_3K_{12}+e_1K_{23},\qquad
 e_0\Gamma_{313}=e_2K_{12}-e_1K_{22},\qquad
 e_0\Gamma_{323}=e_1K_{12}-e_2K_{11},\\
\notag e_0\Gamma_{312}=e_1K_{23}-e_2K_{13},\qquad
e_0\Gamma_{112}=-e_3K_{23}+e_2K_{33},\qquad
e_0\Gamma_{212}=e_3K_{13}-e_1K_{33}
\end{align}
As one can tediously check, \eqref{hypsymm} is symmetric.
\end{proof}

\subsection{The differentiated system}\label{subsec:diffsyst}

In order to derive higher order energy estimates below, we will need to work with differentiated versions of \eqref{e0Kijnew}-\eqref{e0Gammaijbnew}. Moreover, for the boundary value problem (Section \ref{sec:totgeodbd}),  we commute the equations with components of the orthonormal frame, which enables us to use the structure identified in \eqref{hypsymm} to control energies that contain an appropriate number of normal derivatives to the boundary (see Proposition \ref{prop:bdlocex}). 

For this purpose, we consider a multi-index $I$ and the corresponding combination of vector fields $e^I$ among $\{e_\mu\}^3_0$. We will use the following commutation formulas to compute the differentiated equations below:
\begin{align}\label{comm}
[e_i,e_0]=K_i{}^ce_c,\qquad [e_i,e_j]=f^d{}_c(e_if_j{}^c)e_d-f^d{}_c(e_jf_i{}^c)e_d.
\end{align}
We note that \eqref{comm} follows by \eqref{fij} and \eqref{e0fij}. It is important that we do not use relations between the orthonormal frame and its connection coefficients\footnote{As for example, $[e_i,e_j]=\Gamma_{ij}{}^ce_c-\Gamma_{ji}{}^ce_c$.} to compute the commuted equations, since in a local existence argument it is not a prior known for instance that $\Gamma_{ijb}$ are the connection coefficients of $e_1,e_2,e_3$. The fact that the solution to the modified evolution equations \eqref{e0Kijnew}-\eqref{e0Gammaijbnew} gives indeed the connection coefficients of the orthonormal frame $\{e_\mu\}^3_0$, with respect to the Levi-Civita connection of the metric induced by the latter, is shown in Section \ref{sec:EVEsol} together with the vanishing of the Einstein tensor.
\begin{align}
\notag&e_0e^IK_{ij}+e^I(\text{tr}KK_{ij})\\
\label{eq:diff.K}=&\,\frac{1}{2}\bigg[e_ie^I\Gamma^b{}_{jb}-e^be^I\Gamma_{ijb}
+e_je^I\Gamma^b{}_{ib}-e^be^I\Gamma_{jib}-2\delta_{ij}e^ae^I\Gamma^b{}_{ab}\bigg]-[e^I,e_0]K_{ij}\\
\notag&+\frac{1}{2}\bigg[[e^I,e_i]\Gamma^b{}_{jb}-[e^I,e^b]\Gamma_{ijb}
+[e^I,e_j]\Gamma^b{}_{ib}-[e^I,e^b]\Gamma_{jib}-2\delta_{ij}[e^I,e^a]\Gamma^b{}_{ab}\bigg]\\
\notag&+\frac{1}{2}e^I\bigg[\Gamma^b{}_i{}^c\Gamma_{cjb}+\Gamma^b{}_b{}^c\Gamma_{ijc}
+\Gamma^b{}_j{}^c\Gamma_{cib}+\Gamma^b{}_b{}^c\Gamma_{jic}
-\delta_{ij}\big[\Gamma^{bac}\Gamma_{cab}
+\Gamma^b{}_b{}^c\Gamma^a{}_{ac}+|K|^2-(\text{tr}K)^2\big]\bigg],
\\
\notag&e_0e^I\Gamma_{ijb}+e^I(K_i{}^c\Gamma_{cjb})\\
\label{eq:diff.Gamma}=&\,e_je^IK_{bi}-e_be^IK_{ji}
+\delta_{ib}(e^ce^IK_{cj}-e_je^I\text{tr}K)
-\delta_{ij}(e^ce^IK_{cb}-e_be^I\text{tr}K)-[e^I,e_0]\Gamma_{ijb}\\
\notag&+[e^I,e_j]K_{bi}-[e^I,e_b]K_{ji}
+\delta_{ib}([e^I,e^c]K_{cj}-[e^I,e_j]\text{tr}K)
-\delta_{ij}([e^I,e^c]K_{cb}-[e^I,e_b]\text{tr}K)\\
\notag&+e^I\bigg[\Gamma_{bj}{}^cK_{ci}+\Gamma_{bi}{}^cK_{jc}-\Gamma_{jb}{}^cK_{ci}-\Gamma_{ji}{}^cK_{bc}
-\delta_{ib}(\Gamma_c{}^{cl}K_{lj}+\Gamma^c{}_j{}^lK_{cl})
+\delta_{ij}(\Gamma_c{}^{cl}K_{lb}+\Gamma^c{}_b{}^lK_{cl})\bigg]
\end{align}
The differentiated versions of the equations \eqref{e0fij}, \eqref{e0fijinv} read
\begin{align}
\label{eq:diff.f}e_0e^If_i{}^j+K_i{}^ce^If_c{}^j=&-\sum_{I_1\cup I_2=I,\,|I_2|<|I|}e^{I_1}K_i{}^ce^{I_2}f_c{}^j-[e^I,e_0]f_i{}^j
\\
\label{eq:diff.finv}e_0e^If^b{}_j-K_c{}^be^If^c{}_j=&\sum_{I_1\cup I_2=I,\,|I_2|<|I|}e^{I_1}K_c{}^be^{I_2}f^c{}_j-[e^I,e_0]f^b{}_j
\end{align}
\begin{lemma}\label{lem:loss2}
Let $K_{ij},\Gamma_{ijb},f_i{}^j$ be either a solution to \eqref{e0fij},
\eqref{e0Kijnew}, \eqref{e0Gammaijbnew} or an iterative version of these equations, where the frame coefficients $f_i{}^j$ (and hence $e_i=f_i{}^j\partial_j$) are determined by solving \eqref{e0fij} with $K_{ij}$ of the previous step. In the latter case, the first order terms in the RHS of \eqref{e0Kijnew}-\eqref{e0Gammaijbnew} should have $K_{ij},\Gamma_{ijb}$ of the current iterates we're solving for. Then for any a combination of derivatives $e^I$, $K_{ij},\Gamma_{ijb}$ satisfy the following identity:
\begin{align}\label{lossid2}
\notag&\frac{1}{2}e_0(e^IK^{ij}e^IK_{ij})+\frac{1}{4}e_0(e^I\Gamma^{ijb}e^I\Gamma_{ijb})\\
\notag=&\,e_i[e^IK^{ij}e^I\Gamma^b{}_{jb}]-e^b[e^IK^{ij}e^I\Gamma_{ijb}]-e_j[e^I\mathrm{tr}Ke^I\Gamma^{bj}{}_b]\\
&+e^IK^{ij}\bigg[[e^I,e_i]\Gamma^b{}_{jb}-[e^I,e^b]\Gamma_{ijb}-\delta_{ij}[e^I,e^a]\Gamma^b{}_{ab}\bigg]-e^IK^{ij}[e^I,e_0]K_{ij}\\
\notag&+e^I\Gamma^{ijb}\bigg[[e^I,e_j]K_{bi}
+\delta_{ib}([e^I,e^c]K_{cj}-[e^I,e_j]\mathrm{tr}K)\bigg]-\frac{1}{2}e^I\Gamma^{ijb}[e^I,e_0]\Gamma_{ijb}\\
\notag&+e^IK^{ij}e^I\bigg[\Gamma^b{}_i{}^c\Gamma_{cjb}+\Gamma^b{}_b{}^c\Gamma_{ijc}-\mathrm{tr}KK_{ij}
-\frac{1}{2}\delta_{ij}\big[\Gamma^{bac}\Gamma_{cab}
+\Gamma^b{}_b{}^c\Gamma^a{}_{ac}-|K|^2+(\mathrm{tr}K)^2\big]\bigg]\\
\notag&+e^I\Gamma^{ijb}e^I\bigg[\Gamma_{bj}{}^cK_{ci}+\Gamma_{bi}{}^cK_{jc}-\frac{1}{2}K_i{}^c\Gamma_{cjb}-\Gamma_c{}^{cl}K_{lj}-\Gamma^c{}_j{}^lK_{cl}\bigg]
\end{align}
\end{lemma}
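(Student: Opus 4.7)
My plan is to view \eqref{lossid2} as the symmetric hyperbolic energy identity for the principal part of the modified system, now applied at the commuted level. Concretely, I will contract \eqref{eq:diff.K} with $e^IK^{ij}$ and \eqref{eq:diff.Gamma} with $\tfrac{1}{2}e^I\Gamma^{ijb}$, sum, and move derivatives via Leibniz in the principal first-order terms. The pairwise cancellation between integration-by-parts remainders is exactly the symmetry of the principal symbol recorded in Lemma \ref{lem:hypsymm}; since \eqref{eq:diff.K}--\eqref{eq:diff.Gamma} have the same principal part as \eqref{e0Kijnew}--\eqref{e0Gammaijbnew}, that cancellation carries through at the commuted level, and only divergences, commutators, and zeroth-order nonlinearities remain.

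In more detail, contracting \eqref{eq:diff.K} with $e^IK^{ij}$ and using $K_{ij}=K_{ji}$ (which holds either by Lemma \ref{lem:symm} or by construction for the iterates) produces $\tfrac{1}{2}e_0(e^IK^{ij}e^IK_{ij})$ on the left, while on the right the two pairs of $\Gamma$-derivative terms collapse, using $i\leftrightarrow j$ symmetry, to $e^IK^{ij}[e_ie^I\Gamma^b{}_{jb}-e^be^I\Gamma_{ijb}]$, together with the trace contribution $-e^I\mathrm{tr}K\cdot e^ae^I\Gamma^b{}_{ab}$ from the $\delta_{ij}$ piece. In parallel, contracting \eqref{eq:diff.Gamma} with $\tfrac{1}{2}e^I\Gamma^{ijb}$ and using $\Gamma_{ijb}=-\Gamma_{ibj}$ yields $\tfrac{1}{4}e_0(e^I\Gamma^{ijb}e^I\Gamma_{ijb})$ on the left, and on the right the pair $e_je^IK_{bi}-e_be^IK_{ji}$ reduces to a single term $e^I\Gamma^{ijb}e_je^IK_{bi}$ while the $\delta_{ib},\delta_{ij}$ contributions simplify similarly.

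Summing the two identities and applying Leibniz to each principal term, I split e.g.~$e^IK^{ij}\cdot e_ie^I\Gamma^b{}_{jb}$ into a divergence $e_i[e^IK^{ij}e^I\Gamma^b{}_{jb}]$ minus a term where the derivative lands on $e^IK^{ij}$. After relabelling dummy indices and invoking the symmetries of $K$ and $\Gamma$, the latter remainders coincide, with the opposite sign, with the remainders arising from the corresponding Leibniz step in the $\Gamma$-equation; this is precisely the algebraic content of the symmetry verified in Lemma \ref{lem:hypsymm}. What survives on the right-hand side are therefore exactly the three divergences $e_i[e^IK^{ij}e^I\Gamma^b{}_{jb}]$, $-e^b[e^IK^{ij}e^I\Gamma_{ijb}]$, $-e_j[e^I\mathrm{tr}K\cdot e^I\Gamma^{bj}{}_b]$, together with the $[e^I,e_\mu]$-commutator contributions from \eqref{eq:diff.K}--\eqref{eq:diff.Gamma} (contracted against $e^IK^{ij}$ or $e^I\Gamma^{ijb}$ as appropriate), and the $e^I(\text{quadratic in }K,\Gamma)$ terms arising from the nonlinear right-hand sides of \eqref{e0Kijnew}--\eqref{e0Gammaijbnew}.

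The only real obstacle is index bookkeeping: tracking symmetrisations and antisymmetrisations through each pair of indices, relabelling dummy indices consistently, and verifying that the algebraic cancellation of the integration-by-parts remainders agrees term-by-term with the symmetric structure of Lemma \ref{lem:hypsymm}. Importantly, no use is made of the constraint equations, and the identification of $\Gamma_{ijb}$ as the connection coefficients of $\{e_\mu\}_0^3$ is not invoked either; the identity uses only the algebraic form of \eqref{eq:diff.K}--\eqref{eq:diff.Gamma} and \eqref{comm}, which is why it applies equally to an actual solution and to the iterative scheme described in the statement.
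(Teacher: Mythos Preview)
Your proposal is correct and follows exactly the paper's approach: the paper's proof is a single sentence stating that the identity follows by multiplying \eqref{eq:diff.K}--\eqref{eq:diff.Gamma} with $e^IK^{ij}$ and $\tfrac{1}{2}e^I\Gamma^{ijb}$ and using Lemma~\ref{lem:symm}. You have simply spelled out in more detail the Leibniz-and-cancellation mechanism (via the symmetric principal part of Lemma~\ref{lem:hypsymm}) that the paper leaves implicit.
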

\begin{proof}
It follows straightforwardly by multiplying \eqref{eq:diff.K}-\eqref{eq:diff.Gamma} with $e^IK^{ij},\frac{1}{2}e^I\Gamma^{ijb}$ and using Lemma \ref{lem:symm}.
\end{proof}

}

\subsection{Local well-posedness of the reduced equations for the Cauchy problem} \label{se:lwpde}

Since the above equations form a symmetric hyperbolic system, local well-posedness follows from standard arguments. Nonetheless, we provide details below concerning the derivation of higher order energy estimates and the domain of dependence. This will allow us to treat the boundary case by a modification of the present section. 

Define the $H^s(U_t)$ spaces, $U_t\subset \Sigma_t$, as the set of functions satisfying 
\begin{align}\label{Hs}
\|u\|_{H^s(U_t)}^2:=\sum_{|I|\leq s}\int_{U_t}(e^Iu)^2\mathrm{vol}_{U_t}<+\infty,
\end{align}
where $I$ is a multi-index consisting only of spatial indices so that $e^I$ is a combination of $I$ derivatives among $e_1,e_2,e_3$,  and $\mathrm{vol}_{U_t}$ is the intrinsic volume form. One might need more than one orthonormal frame to cover all of $T\Sigma_t$, but we could also consider the corresponding norms restricted to the slicing $U_t$ of a neighbourhood of a point.
In the case where $u$ depends on various spatial indices, we define its $H^s$ norm similarly, summing as well over all indices.
\begin{remark}
The above $H^s$ spaces are equivalent to the usual spaces defined using coordinate derivatives $\partial^I$, provided we have control over the transition coefficients $f_i{}^j,f^b{}_j$.
The use of $e^I$ vector fields is essential for the treatment of the boundary problem in the next section. For this subsection we could have used $\partial^I$ instead. 
\end{remark}
\begin{lemma}\label{lem:Hs}
Let $U_t$ be an open, bounded, subset of $\Sigma_t$ with smooth boundary.
Assume the transition coefficients $f_i{}^j,f^b{}_j$ satisfy the bounds 
\begin{align}\label{controlfij}
\sum_{|I|\leq1}\sum_{i,j,b=1}^3\sup_{t\in[0,T]}(\|e^If_i{}^j\|_{L^\infty(U_t)}+\|e^If^b{}_j\|_{L^\infty(U_t)})\leq D,
\end{align}
for some $T>0$. Then the following Sobolev inequalities hold with respect to the $H^s$ spaces defined above:
\begin{align}\label{Sob}
\|u\|_{L^\infty(U_t)}\leq C\|u\|_{H^2(U_t)},\qquad\|u\|_{L^4(U_t)}\leq C\|u\|_{H^1(U_t)}
\end{align}
for all $t\in[0,T]$, where $C>0$ depends on $U_t$ and $D$.
\end{lemma}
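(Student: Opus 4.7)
The plan is to reduce the inequalities to the standard Sobolev embeddings on bounded open subsets of $\mathbb{R}^3$ with smooth boundary (which give $W^{2,2} \hookrightarrow L^\infty$ and $W^{1,2} \hookrightarrow L^4$ in three dimensions), by showing that the frame-defined norm $\|\cdot\|_{H^s(U_t)}$ is equivalent to the coordinate-defined one, under the hypothesis \eqref{controlfij}.

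First I would exploit the pair of identities $e_i u = f_i{}^j \partial_j u$ and $\partial_j u = f^b{}_j e_b u$. For first derivatives the $L^\infty$ bound on $f^b{}_j$ gives immediately $|\partial_j u| \le D \sum_b |e_b u|$. For second derivatives, differentiating produces
\begin{align*}
\partial_i \partial_j u = (\partial_i f^b{}_j)\, e_b u + f^b{}_j f^c{}_i\, e_c e_b u, \qquad \partial_i f^b{}_j = f^a{}_i\, e_a f^b{}_j,
\end{align*}
and all the coefficients appearing are pointwise controlled by a power of $D$ thanks to \eqref{controlfij}, which in particular includes the $L^\infty$ bound on $e^I f^b{}_j$ for $|I| \le 1$. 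Reversing the roles of frame and coordinate derivatives gives the converse bound. At this point I would next compare the volume forms: since $f_i{}^j$ and $f^b{}_j$ are mutually inverse $3\times 3$ matrices both bounded by $D$ entrywise, their determinants are bounded above and below away from zero by constants depending on $D$ alone, and since $g(\partial_i,\partial_j)=f^b{}_i f^c{}_j \delta_{bc}$ the intrinsic volume form $\mathrm{vol}_{U_t} = \sqrt{\det g}\, dx$ is comparable to Lebesgue measure $dx$ on $U_t$.

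Combining these two observations, the frame-based norm $\|u\|_{H^s(U_t)}$ defined in \eqref{Hs} is equivalent, for $s=1,2$, to the usual Sobolev norm of $u$ on $U_t$ viewed as a bounded open subset of $\mathbb{R}^3$ (via the coordinate chart $(x^1,x^2,x^3)$), with equivalence constants depending only on $D$. Applying the classical Sobolev embeddings for bounded domains with smooth boundary in $\mathbb{R}^3$, namely $W^{2,2}(U_t) \hookrightarrow L^\infty(U_t)$ and $W^{1,2}(U_t) \hookrightarrow L^4(U_t)$, and noting that the $L^\infty$ norm is insensitive to the choice of measure and the $L^4$ norm changes only by a constant under the comparison of volume forms, one obtains \eqref{Sob} with constant $C$ depending on $U_t$ (through the Sobolev constant of the domain) and on $D$.

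There is no real obstacle here; the only bookkeeping point that requires a bit of care is that the hypothesis \eqref{controlfij} only provides control up to first frame derivatives of $f_i{}^j, f^b{}_j$, which is exactly enough to handle the comparison up to two derivatives of $u$ but no more — this matches the order of the Sobolev inequalities stated. If one wanted higher $s$, one would need correspondingly higher control on the transition coefficients.
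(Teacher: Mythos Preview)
Your proposal is correct and follows essentially the same approach as the paper: convert coordinate derivatives to frame derivatives via $\partial_j u = f^b{}_j e_b u$ (picking up at most first frame derivatives of the transition coefficients at second order), bound all coefficients by powers of $D$ using \eqref{controlfij}, and invoke the classical Sobolev embeddings on the domain. The paper's proof is terser and leaves the volume-form comparison implicit, but the argument is the same.
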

\begin{proof}
It is immediate by invoking the corresponding classical inequalities (for $H^s$ spaces defined via coordinate derivatives) and using \eqref{controlfij}:
\begin{align*}
\|u\|_{L^\infty(U_t)}^2\leq&\, C\int_{U_t}(\partial^2u)^2+(\partial u)^2+u^2\mathrm{vol}_{U_t}\\
=&\,C\int_{U_t}f^4(e^2u)^2+f^2(ef)^2(eu)^2+f^2(e u)^2+u^2\mathrm{vol}_{U_t}\\
\leq& \,C\int_{U_t}D^4(e^2u)^2+D^4(eu)^2+D^2(e u)^2+u^2\mathrm{vol}_{U_t}
\end{align*}
The second inequality is derived similarly.
\end{proof}
\begin{proposition}\label{prop:locex}
The system of reduced equations \eqref{e0Kijnew}, \eqref{e0Gammaijbnew}, \eqref{e0fij}, \eqref{e0fijinv} is well-posed in $L^\infty_tH^s$, $s\ge 3$, with initial data prescribed along the Cauchy hypersurface $\Sigma_0$. 
\end{proposition}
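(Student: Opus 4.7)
My plan is to prove local well-posedness via a Picard iteration, with the main work being the derivation of uniform higher-order energy estimates in the $H^s$ spaces defined through the adapted frame.

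I would first set up the iteration scheme. Given iterates $(K^{(n)}, \Gamma^{(n)}, f^{(n)})$ with $f^{(n)}$ uniformly invertible, define $e_i^{(n)} := (f^{(n)})_i{}^j \partial_j$ and solve for the next iterate from the linear system obtained by freezing the zeroth-order coefficients at step $n$ while retaining the principal first-order structure of \eqref{e0Kijnew}--\eqref{e0Gammaijbnew} with differentiation by $e_i^{(n)}$. Since $e_0 = \partial_t$, the frame equations \eqref{e0fij}--\eqref{e0fijinv} are pure transport and reduce to ODEs for each fixed spatial point, trivially solvable on short time intervals and preserving invertibility provided $K^{(n)}$ is bounded. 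The linear $(K^{(n+1)}, \Gamma^{(n+1)})$ subsystem is symmetric hyperbolic by Lemma \ref{lem:hypsymm}, so standard linear theory delivers existence at each step.

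For the uniform-in-$n$ bound I would commute the equations with $e^I$ for $|I| \leq s$ using \eqref{eq:diff.K}--\eqref{eq:diff.finv}, apply the energy identity of Lemma \ref{lem:loss2}, and integrate over $\Sigma_t$ against the intrinsic volume form. The RHS splits into a spatial divergence (which integrates to zero on $\Sigma_t$ assuming compactly supported data, or on a domain of dependence for the local statement), commutator contributions from $[e^I, e_0]$ and $[e^I, e_j]$ that by the formulas \eqref{comm} involve at most $|I|$ derivatives of $K, \Gamma$ and one extra derivative of $f$, and quasilinear error terms that are products of lower-order nonlinearities with top-order unknowns. With $s \geq 3$, the Sobolev embedding of Lemma \ref{lem:Hs} gives $H^{s-1} \hookrightarrow L^\infty$ in three dimensions, so every error term is bounded by the $H^s$ energy times a continuous function of itself, plus lower-order quantities. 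The time derivative of the intrinsic volume form produces a $\mathrm{tr}\,K$ factor that is likewise controlled. A bootstrap/Gronwall argument then closes a uniform bound on some interval $[0, T^\ast]$ independent of $n$.

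To pass to the limit I would then estimate the differences $\delta K^{(n)} := K^{(n+1)} - K^{(n)}$ (and similarly for $\Gamma, f$), which satisfy a linear symmetric hyperbolic system with sources linear in the differences and coefficients bounded by the uniform $H^s$ bound; a standard energy estimate at one derivative less shows the iterates form a Cauchy sequence in $L^\infty_t H^{s-1}$, and the limit solves the original system in $L^\infty_t H^s$ by weak-$\ast$ compactness and interpolation. The algebraic symmetries $K_{ij}=K_{ji}$, $\Gamma_{ijb}=-\Gamma_{ibj}$, $f_i{}^j f^b{}_j = \delta^b_i$ are propagated by Lemma \ref{lem:symm}. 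The principal obstacle, rather than the symmetric hyperbolic theory itself, is the coupling of the commutator structure to $f$: since $e_i$ depends on the unknown $f_i{}^j$, commuting the equations produces extra derivatives of $f$ which must be controlled at the same regularity $H^s$, and the frame must remain uniformly invertible throughout the iteration for the Sobolev inequalities of Lemma \ref{lem:Hs} to apply. Handling this self-consistently is what requires the most bookkeeping.
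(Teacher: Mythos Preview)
Your overall strategy is sound and aligns with the paper's approach: commute with $e^I$, apply the energy identity of Lemma~\ref{lem:loss2}, control commutator and nonlinear errors via the Sobolev inequalities of Lemma~\ref{lem:Hs}, and close by Gronwall. The paper in fact defers the Picard iteration details as ``standard,'' so your discussion of convergence is more than the paper provides.

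There is, however, one genuine gap. You assert that the spatial divergence terms ``integrate to zero on $\Sigma_t$ assuming compactly supported data, or on a domain of dependence for the local statement.'' For compactly supported data this is fine, but on a domain of dependence it is false: the divergence terms in the second line of \eqref{lossid2} produce nontrivial boundary integrals on the lateral boundary $\partial U_t$, and the coarea formula contributes a further boundary term since the region shrinks in $t$. The paper handles this explicitly: after integrating by parts one obtains on $\partial U_t$ a quadratic form in $e^IK,e^I\Gamma$ contracted against the outward normal, and the core computation (see \eqref{enineq5}--\eqref{enineq6}) verifies algebraically that this form is nonpositive. This requires decomposing $K_{AB}$ and $\Gamma_{A3B}$ into trace and tracefree parts and applying Cauchy's inequality to the cross terms; only then can the boundary contribution be dropped and the clean inequality \eqref{enineq7} obtained.

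For the pure Cauchy problem you could legitimately bypass this by restricting to compactly supported data and invoking finite speed of propagation a posteriori. But the domain-of-dependence sign computation is what the paper actually does, and it is not incidental: it is the template reused in Proposition~\ref{prop:bdlocex} for the IBVP, where one must know exactly which boundary terms survive on $\mathcal{T}$.
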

\begin{proof}
We assume that a globally hyperbolic solution exists, in the relevant spaces, and derive a priori energy estimates below. Since the estimates for $f_i{}^j,f^b{}_j$ can be trivially derived using the ODEs \eqref{e0fij}-\eqref{e0fijinv}, we assume we already have control over their $H^s$ norm. Note that the assumption $s\ge3$ is consistent with the pointwise control \eqref{controlfij} of up one derivative of $f_i{}^j,f^{}_j$ via \eqref{Sob}.

{}{Consider the differentiated equations \eqref{eq:diff.K}-\eqref{eq:diff.Gamma}, for a multi-index $I$ of order $|I|\leq s$, over the future domain of dependence\footnote{Since the domain of dependence depends on the spacetime metric, in a Picard iteration the actual region of spacetime is not known until after a solution has been found, but one can enlarge slightly the domain to guarantee that in the end the resulting region includes the true domain of dependence of $U_0$.} of a neighbourhood of a point, $U_0$, foliated by $U_t$, $t\in[0,T]$, for some small $T\ge0$. Using Lemma \ref{lem:loss2}, we obtain the energy inequality:}
\begin{align}\label{enineq}
\notag&\sum_{|I|\leq s}\partial_t\int_{U_t}\frac{1}{2}\sum_{i,j}(e^IK_{ij})^2+\frac{1}{4}\sum_{i,j,b}(e^I\Gamma_{ijb})^2\mathrm{vol}_{U_t}\\
\leq&-\sum_{|I|\leq s}\int_{\partial U_t}\frac{1}{2}\sum_{i,j}(e^IK_{ij})^2+\frac{1}{4}\sum_{i,j,b}(e^I\Gamma_{ijb})^2\mathrm{vol}_{\partial U_t}\\
\tag{$\partial_t\mathrm{vol}_{U_t}=\mathrm{tr}K\mathrm{vol}_{U_t}$}&+\sum_{|I|\leq s}\int_{U_t}\bigg[\frac{1}{2}\sum_{i,j}(e^IK_{ij})^2+\frac{1}{4}\sum_{i,j,b}(e^I\Gamma_{ijb})^2\bigg]\text{tr}K\mathrm{vol}_{U_t}\\
\notag&+\sum_{|I|\leq s}\int_{U_t}e_j[e^IK^{ij}e^I\Gamma^b{}_{ib}]-e^i[e^I\text{tr}Ke^I\Gamma^b{}_{ib}]-e_b[e^I\Gamma^{ijb}e^IK_{ji}]\mathrm{vol}_{U_t}\\
\notag&+\sum_{|I|\leq s}\int_{U_t}\bigg(e^IK^{ij}\bigg[[e^I,e_i]\Gamma^b{}_{jb}-[e^I,e^b]\Gamma_{ijb}-\delta_{ij}[e^I,e^a]\Gamma^b{}_{ab}\bigg]-e^IK^{ij}[e^I,e_0]K_{ij}\\
\notag&+e^I\Gamma^{ijb}\bigg[[e^I,e_j]K_{bi}
+\delta_{ib}([e^I,e^c]K_{cj}-[e^I,e_j]\mathrm{tr}K)\bigg]-\frac{1}{2}e^I\Gamma^{ijb}[e^I,e_0]\Gamma_{ijb}\bigg)\mathrm{vol}_{U_t}\\
\tag{by \eqref{Sob}}&+C\|K\|_{H^s(U_t)}\|\Gamma\|_{H^s(U_t)}^2+C\|K\|_{H^s(U_t)}^3,
\end{align}
for a constant $C>0$, depending on the number of derivatives $s$ and $U_t$.
{}{The first term in the RHS comes from the coarea formula,\footnote{{}{Write $U_t$ as a union of an open set $U_T$ (independent of $t$) and 2D surfaces constituting a variation of $\partial U_t$ in the inward normal direction $N$ to the surfaces. Decomposing $\partial_t=L-N$, we notice that $L$ commutes with the integral, while the $-N$ component gives the additional boundary term above.}} having a negative sign since the null boundary of $\{U_\tau\}_{\tau\in[0,t]}$ is ingoing.}
The last line includes all the terms corresponding to {}{the last two lines in \eqref{lossid2},} which are treated by estimating the lowest order term in $L^\infty$ and using Cauchy-Schwarz.
To bound the terms in the second and third from last lines, we expand the commutators schematically for the two types of terms using \eqref{comm}: 
\begin{align}\label{enineq2}
\bigg|\int_{U_t}e^IK*[e^I,e_0]K\mathrm{vol}_{U_t}\bigg|=&\,\bigg|\sum_{|I_1|+|I_2|=|I|-1}\int_{U_t}e^IK*e^{I_1}K*e^{I_2}eK\mathrm{vol}_{U_t}\bigg|
\overset{\eqref{Sob}}{\leq} C\|K\|_{H^s}^3
\end{align}
and
\begin{align}\label{enineq3}
\bigg|\int_{U_t}e^IK*[e^I,e_i]\Gamma\mathrm{vol}_{U_t}\bigg|=&\,\bigg|\sum_{|I_1|+|I_2|+| I_3|=|I|-1}\int_{U_t}e^IK*e^{I_1}f*e^{I_2}ef*e^{I_3}e\Gamma\mathrm{vol}_{U_t}\bigg|\\
\tag{by \eqref{Sob}}\leq&\, C\|K\|_{H^s}\|f\|^2_{H^s}\|\Gamma\|_{H^s},
\end{align}
where we make use of the inequality $\|u\|_{L^4(U_t)}\leq C\|u\|_{H^1(U_t)}$ only in the last estimate, when $s=3$, $|I_2|=|I_3|=1$, after performing Cauchy-Schwarz twice, otherwise estimating the lowest order term in $L^\infty$.

We may combine \eqref{enineq}-\eqref{enineq3}, integrate in $[0,t]$ and integrate by parts\footnote{The coefficients of the interior terms generated by integrating by parts the terms in the fourth line in \eqref{enineq}, contain first derivatives of $f_i{}^j$ that can be estimated in $L^\infty$.} to obtain the overall integral inequality
\begin{align}\label{enineq4}
\notag\|u\|_{H^s(U_t)}^2\leq &\,\|u\|_{H^s(U_0)}^2+C\int^t_0\|u\|_{H^s(U_\tau)}^3d\tau\\
&-\int^t_0\sum_{|I|\leq s}\int_{\partial U_t}\frac{1}{2}\sum_{i,j}(e^IK_{ij})^2+\frac{1}{4}\sum_{i,j,b}(e^I\Gamma_{ijb})^2\mathrm{vol}_{U_t}d\tau\\
\notag&+\int^t_0\sum_{|I|\leq3}\int_{\partial U_t}\bigg[e^IK^i{}_ce^I\Gamma^b{}_{ib}-e^I\text{tr}Ke^I\Gamma^b{}_{cb}-e^I\Gamma^{ij}{}_ce^IK_{ji}\bigg]N_{\partial U_\tau}^c\mathrm{vol}_{U_t}d\tau
\end{align}
where $u=(\frac{1}{\sqrt{2}}K_{ij},\frac{1}{2}\Gamma_{ijb})_{i,j,b=1,2,3}$ and $N_{\partial U_\tau}$ is the outward unit normal to $U_t$ in $\Sigma_t$. 

It remains to show that the sum of all boundary terms in the last two lines of \eqref{enineq3} has a favourable sign. For notational simplicity in the following computations, we assume\footnote{This is without loss of generally, since $N_{\partial U_\tau}$ can be written as a linear combination of $e_1,e_2,e_3$, where the sum of the squares of the coefficients is $1$. Repeating the argument that follows for each component of $N_{\partial U_\tau}$, leads to the same conclusion by using Cauchy's inequality.} that $e_3=N_{\partial U_\tau}$ and omit $e^I$. The integrands of the boundary terms then read:
\begin{align}
\notag&-\frac{1}{2}K^{ij}K_{ij}-\frac{1}{4}\Gamma^{ijb}\Gamma_{ijb}
+K^i{}_3\Gamma^b{}_{ib}-\text{tr}K\Gamma^b{}_{3b}-\Gamma^{ij}{}_3K_{ji}\\
\tag{$\hat{K}_{AB}:=K_{AB}-\frac{1}{2}\delta_{AB}K_C{}^C$}=&-\frac{1}{2}\left( K_{33} \right)^2-K^A{}_3K_{A3}-\frac{1}{2}\hat{K}^{AB}\hat{K}_{AB}-{}{\frac{1}{4}}(K_C{}^C)^2\\
\notag&-\frac{1}{4}\Gamma^{ABC}\Gamma_{ABC}-\frac{1}{4}\Gamma_3{}^{AB}\Gamma_{3AB}-\frac{1}{2}\Gamma_{33}{}^A\Gamma_{33A}-\frac{1}{2}\Gamma^A{}_3{}^B\Gamma_{A3B}\\
\notag&+K_3{}^A\Gamma^b{}_{Ab}-K_C{}^C\Gamma^B{}_{3B}-\Gamma^{iB}{}_3K_{iB}\\
\label{enineq5}=&-\frac{1}{2} \left( K_{33} \right)^2-K^A{}_3K_{A3}-\frac{1}{2}\hat{K}^{AB}\hat{K}_{AB}-{}{\frac{1}{4}}(K_C{}^C)^2\\
\tag{$\widehat{\Gamma}_{A3B}:=\Gamma_{A3B}-\frac{1}{2}\delta_{AB}\Gamma_{C3}{}^C$}
&-\frac{1}{4}\Gamma^{ABC}\Gamma_{ABC}-\frac{1}{4}\Gamma_3{}^{AB}\Gamma_{3AB}-\frac{1}{2}\Gamma_{33}{}^A\Gamma_{33A}-\frac{1}{2}\widehat{\Gamma}^A{}_3{}^B\widehat{\Gamma}_{A3B}-{}{\frac{1}{4}}(\Gamma^C{}_{3C})^2\\
\notag&+K_3{}^A\Gamma^B{}_{AB}-K_C{}^C\Gamma^B{}_{3B}-\Gamma^{AB}{}_3K_{AB}. 
\end{align}
Rewrite the last line
\begin{align}\label{enineq6}
\notag &K_3{}^A\Gamma^B{}_{AB}-K_C{}^C\Gamma^B{}_{3B}-\Gamma^{AB}{}_3K_{AB}\\
=&\,K_3{}^A\Gamma^B{}_{AB}-K_C{}^C\Gamma^B{}_{3B}+\Gamma^A{}_3{}^BK_{AB}\\
\notag=&\,K_3{}^A\Gamma^B{}_{AB}-\frac{1}{2}K_C{}^C\Gamma^B{}_{3B}+\widehat{\Gamma}^A{}_3{}^B\hat{K}_{AB}\\
\notag \leq&\, K_3{}^AK_{3A}+\frac{1}{4}\Gamma^{BA}{}_B\Gamma^B{}_{AB}+\frac{1}{4}(K_C{}^C)^2+\frac{1}{4}(\Gamma^B{}_{3B})^2
+\frac{1}{2}\widehat{\Gamma}^A{}_3{}^B\widehat{\Gamma}_{A3B}+\frac{1}{2}\hat{K}^{AB}\hat{K}_{AB}
\end{align}
Notice that $\Gamma^{BA}{}_B\Gamma^B{}_{AB}=(\Gamma_{112})^2+(\Gamma_{221})^2$. 

Thus, plugging \eqref{enineq6} into \eqref{enineq5}, we conclude that the sum of all boundary terms has an overall negative sign. 
Therefore, they can be dropped in \eqref{enineq4}, giving  
\begin{align}\label{enineq7}
\|u\|_{H^s(U_t)}^2\leq &\,\|u\|_{H^s(U_0)}^2+C\int^t_0\|u\|_{H^s(U_\tau)}^3d\tau
\end{align}
The preceding estimate can be upgraded to a Picard iteration and a local existence result in a standard way, we omit the details.
\end{proof}
\subsection{Initial data}\label{subsec:ID}

Our initial data $(\Sigma_0,g,K)$ are that of the EVE, i.e.~the induced metric and the second fundamental form on $\Sigma_0$, verifying the constraint equations \eqref{Hamconst}-\eqref{momconst}, with $h,k$ replaced by $g,K$. Given an orthonormal frame $e_1,e_2,e_3$ on $\Sigma_0$ and an abstract coordinate system $(x_1,x_2,x_3)$, the initial data for $f_i^j,f^b{}_j,K_{ij},\Gamma_{ijb}$ are determined in the obvious way, c.f. \eqref{fij}, \eqref{Kij}, \eqref{Gammaijb}. In particular, the functions $\Gamma_{ijb}$ are the connection coefficients associated to $e_1,e_2,e_3$, with respect to the Levi-Civita connection $D$ of $g$, and they are hence anti-symmetric in the indices $j,b$, while $K_{ij}=K_{ji}$, $f_i{}^jf^b{}_j=\delta^i_j$, $f^b{}_jf_b{}^i=\delta^i_j$ on $\Sigma_0$.

\section{Application to totally geodesic boundaries}\label{sec:totgeodbd}

Next, we show that our framework finds an immediate application to the initial boundary value problem, in the case of timelike, totally geodesic boundaries. We demonstrate the well-posedness of the modified reduced system \eqref{e0fij}, \eqref{e0Kijnew}, \eqref{e0Gammaijbnew}, subject to the induced boundary and compatibility conditions (see Lemmas \ref{lem:compcond.angle}, \ref{lem:bdcond}, \ref{lem:compcond}).
{}{We use the notation $\mathcal{T},\mathcal{S}_t$ for the timelike boundary of $\mathcal{M}$ and its cross sections $\Sigma_t\cap\mathcal{T}$, $\mathcal{S}_0=\mathcal{S}$.}

 \begin{figure}[H]
    \begin{center}
   \includegraphics[scale=1.2]{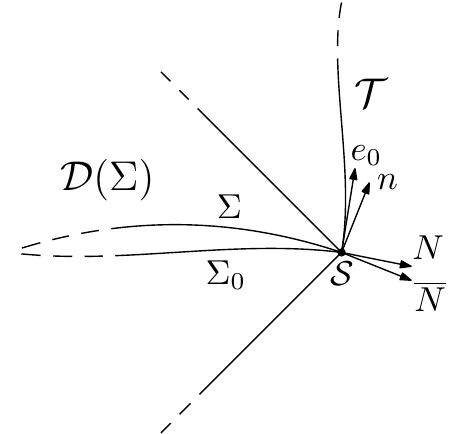}
    \caption{The classical solution in the domain of dependence $\mathcal{D}(\Sigma)$.}
    \label{fig-Sigma}
    \end{center}
  \end{figure}
\subsection{Choosing an orthogonal initial slice to the boundary}\label{subsec:orth.slice}

Let $e_0$ be the normal to $\mathcal{S}$, within $\mathcal{T}$, and $N$ be the outward unit normal to the boundary. The intial slice $\Sigma$ will not in general be orthogonal to the boundary $\mathcal{T}$, but it will have a given angle $\omega:\mathcal{S}\to\mathbb{R}$, that defines the hyperbolic rotation between $(e_0,N)$ and the pair $(n,\overline{N})$, where $n$ is the future unit normal to $\Sigma$ on $\mathcal{S}$ and $\overline{N}$ is the outward unit normal to $\mathcal{S}$, within $\Sigma$, see Figure \ref{fig-Sigma}: 
\begin{align}\label{omega}
\left\{\begin{array}{ll}
e_0=&\,n\cosh\omega-\overline{N}\sinh\omega\\
N=&\,-n\sinh\omega+\overline{N}\cosh\omega
\end{array}\right.,\qquad
\left\{\begin{array}{ll}
n=&\,e_0\cosh\omega+N\sinh\omega\\
\overline{N}=&\,e_0\sinh\omega+N\cosh\omega
\end{array}\right..
\end{align}
However, we may choose an other initial hypersurface, $\Sigma_0$, which is orthogonal to $\mathcal{T}$, contained in the (future and past) domain of dependence of $\Sigma$. Then, we set up the framework presented in the previous section, using the initial data on $\Sigma_0$ that are induced by the classical solution to the EVE in the domain of dependence region $\mathcal{D}(\Sigma)$. Moreover, the compatibility conditions of the latter initial data on $\mathcal{S}$ are induced by those on $\Sigma$, see Lemmas \ref{lem:compcond.angle}, \ref{lem:compcond}.

\subsection{Boundary and compatibility conditions in the geodesic frame} \label{lem:bcgf}

For a spacetime with a timelike boundary $\mathcal{T}$, the $e_0$ geodesics (relative to ${\bf g}$) will not in general remain tangent to $\mathcal{T}$. This makes the geodesic frame \eqref{geodgauge}, as it stands, unsuitable for studying the general boundary value problem. However, for a totally geodesic boundary, the $e_0$ geodesics will indeed foliate a neighbourhood of $\mathcal{T}$. In this case, the second fundamental form of the boundary
\begin{align}\label{chi}
\chi(Y,Z):={\bf g}({\bf D}_{Y}N,Z)=\chi(Z,Y),\qquad Y,Z\perp N,
\end{align}
is identically zero, $\chi\equiv0$. Here $N$ is the outward unit normal to $\mathcal{T}$.

\begin{lemma}[Compatibility conditions with an angle]\label{lem:compcond.angle}
The initial data $(\Sigma,h,k)$ must satisfy the following zeroth order compatibility conditions:
\begin{align}\label{compcond.angle}
\begin{split}
\chi(\overline{X},\overline{Y})=0&\qquad\Longleftrightarrow\qquad k(\overline{X},\overline{Y})\sinh\omega=h(\nabla_{\overline{X}}\overline{N},\overline{Y})\cosh\omega\\
\chi(\overline{X},e_0)=0&\qquad\Longleftrightarrow\;\,\,\qquad\qquad k(\overline{X},\overline{N})=\overline{X}\omega
\end{split}
\end{align}
for all $\overline{X},\overline{Y}\in T\mathcal{S}$, where $\nabla$ is the Levi-Civita connection of $h$. 
The vanishing of $\chi(e_0,e_0)$ is a manifestation of the fact that the boundary $\mathcal{T}$ is ruled by the $e_0$ ${\bf g}$-geodesics, emanating from $\mathcal{S}$, and it imposes compatibility conditions on the gauge of preference (e.g. the lapse of a spacetime foliation).\footnote{Indeed, for a general $t$-foliation with lapse $\Phi$, the acceleration of the unit normal timelike vector field, ${\bf D}_{e_0}e_0$, equals to the spatial gradient $D\log\Phi$. In addition, $\chi(e_0,e_0)={\bf g}({\bf D}_{e_0}N,e_0)=-N\log \Phi$. The latter is automatically zero in our framework, since the lapse is equal to $1$.}
Higher order compatibility conditions can be derived by plugging \eqref{compcond.angle} into the Einstein vacuum equations, see Lemma \ref{lem:compcond}.
\end{lemma}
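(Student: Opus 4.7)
The plan is to compute $\chi$ on $\mathcal{S}$ directly in terms of the initial data $(h,k)$ and the angle $\omega$; both equivalences will then follow by reading off the resulting expressions. The key substitution is $N=-n\sinh\omega+\overline{N}\cosh\omega$ from \eqref{omega}, which I would plug into $\chi(Y,Z)=\mathbf{g}(\mathbf{D}_Y N,Z)$. Using the Leibniz rule, this gives
\begin{align*}
\mathbf{D}_{\overline{X}} N = -\sinh\omega\,\mathbf{D}_{\overline{X}}n \;-\;\cosh\omega\,(\overline{X}\omega)\,n\;+\;\cosh\omega\,\mathbf{D}_{\overline{X}}\overline{N}\;+\;\sinh\omega\,(\overline{X}\omega)\,\overline{N},
\end{align*}
since $\overline{X}(\sinh\omega)=\cosh\omega\cdot\overline{X}\omega$ and similarly for $\cosh\omega$.

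For the first identity, I pair with $Z=\overline{Y}\in T\mathcal{S}$. Since $\overline{Y}\subset T\Sigma$ and $\overline{Y}\perp\overline{N}$, the two pairings with $n$ and $\overline{N}$ drop out. For the remaining two terms, I use $k(\overline{X},\overline{Y})=\mathbf{g}(\mathbf{D}_{\overline{X}}n,\overline{Y})$ (from the definition \eqref{Kij} of the second fundamental form) and the Gauss formula $\mathbf{D}_{\overline{X}}\overline{N}=\nabla_{\overline{X}}\overline{N}+k(\overline{X},\overline{N})\,n$ for vectors tangent to $\Sigma$, which together with $\mathbf{g}(n,\overline{Y})=0$ give $\mathbf{g}(\mathbf{D}_{\overline{X}}\overline{N},\overline{Y})=h(\nabla_{\overline{X}}\overline{N},\overline{Y})$. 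Hence
\begin{align*}
\chi(\overline{X},\overline{Y})=-\sinh\omega\,k(\overline{X},\overline{Y})+\cosh\omega\,h(\nabla_{\overline{X}}\overline{N},\overline{Y}),
\end{align*}
from which the first line of \eqref{compcond.angle} is immediate.

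For the second identity, I pair with $Z=e_0$ and use $\mathbf{g}(e_0,n)=-\cosh\omega$, $\mathbf{g}(e_0,\overline{N})=-\sinh\omega$ from \eqref{omega}. The two ``$\overline{X}\omega$'' terms contribute $(\cosh^2\omega-\sinh^2\omega)\,\overline{X}\omega=\overline{X}\omega$. The remaining two pairings are evaluated using (i) $\mathbf{D}_{\overline{X}}n\perp n$ so that $\mathbf{g}(\mathbf{D}_{\overline{X}}n,e_0)=-\sinh\omega\,k(\overline{X},\overline{N})$, and (ii) the Gauss formula together with $\nabla_{\overline{X}}\overline{N}\perp\overline{N}$ giving $\mathbf{g}(\mathbf{D}_{\overline{X}}\overline{N},e_0)=-\cosh\omega\,k(\overline{X},\overline{N})$. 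Collecting all four contributions and simplifying with $\cosh^2\omega-\sinh^2\omega=1$ yields
\begin{align*}
\chi(\overline{X},e_0)=\overline{X}\omega-k(\overline{X},\overline{N}),
\end{align*}
which gives the second equivalence. The only real ``obstacle'' is careful bookkeeping of signs coming from the Lorentzian signature and the hyperbolic rotation; no nontrivial geometric argument is needed beyond the Gauss formula for $\Sigma$ and the explicit decomposition \eqref{omega}. The statement about $\chi(e_0,e_0)$ follows from the fact that in our gauge $e_0$ is the tangent to a unit-speed $\mathbf{g}$-geodesic, so $\mathbf{D}_{e_0}e_0=0$ and $\chi(e_0,e_0)=\mathbf{g}(\mathbf{D}_{e_0}N,e_0)=-\mathbf{g}(N,\mathbf{D}_{e_0}e_0)=0$ automatically.
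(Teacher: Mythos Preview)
Your proof is correct and follows essentially the same approach as the paper: substitute $N=-n\sinh\omega+\overline{N}\cosh\omega$ into $\chi(Y,Z)=\mathbf{g}(\mathbf{D}_Y N,Z)$, expand via the Leibniz rule, and read off the two identities. Your presentation is slightly more explicit in invoking the Gauss formula and the orthogonality relations, but the computation is identical to the paper's.
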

\begin{proof}
Plugging \eqref{omega} in the defintion \eqref{chi} of $\chi$, we have
\begin{align*}
\chi(\overline{X},\overline{Y})=&\,{\bf g}({\bf D}_{\overline{X}}(-n\sinh\omega+\overline{N}\cosh\omega),\overline{Y})\\
=&-k(\overline{X},\overline{Y})\sinh\omega+h(\nabla_{\overline{X}}\overline{N},\overline{Y})\cosh\omega-h(n,\overline{Y})X\sinh\omega+h(\overline{N},\overline{Y})X\cosh\omega\\
=&-k(\overline{X},\overline{Y})\sinh\omega+h(\nabla_{\overline{X}}\overline{N},\overline{Y})\cosh\omega
\end{align*}
and 
\begin{align*}
\chi(\overline{X},e_0)=&\,{\bf g}({\bf D}_{\overline{X}}(-n\sinh\omega+\overline{N}\cosh\omega),n\cosh\omega-\overline{N}\sinh\omega)\\
=&\,(\cosh\omega)^2 \overline{X}\omega+k(\overline{X},\overline{N})(\sinh\omega)^2-k(\overline{X},\overline{N})(\cosh\omega)^2-(\sinh\omega)^2\overline{X}\omega\\
=&-k(\overline{X},\overline{N})+\overline{X}\omega,
\end{align*}
%
%
which proves the equivalence \eqref{compcond.angle}.
\end{proof}
An essential ingredient in our approach is the use of an adapted frame to the boundary. The existence of such a frame, compatible with the propagation conditon \eqref{geodgauge}, is possible thanks to the vanishing of $\chi$.
\begin{lemma}[Adapted frame to the boundary]\label{lem:adframe}
Let $\Sigma_0$ be orthogonal to $\mathcal{T}$, as in Figure \ref{fig-Sigma}, and let $e_0$ denote its future unit normal. Also, let $e_1,e_2,e_3$ be an orthonormal frame tangent to $\Sigma_0$, such that at the boundary $e_1,e_2\in {}{T\mathcal{S}}$ and $e_3$ coincides with the outward unit normal  $N$. Then the frame verifying \eqref{geodgauge} is adapted to the boundary. In particular, the $e_0$ curves emanating from ${}{\mathcal{S}}$ remain tangent to $\mathcal{T}$ and $e_3=N$ on $\mathcal{T}$.
\end{lemma}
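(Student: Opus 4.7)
\medskip

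\noindent\textbf{Proof proposal.} The plan is to exploit the totally geodesic property $\chi\equiv 0$ of $\mathcal{T}$ in two ways: first to confine the integral curves of $e_0$ issuing from $\mathcal{S}$ to $\mathcal{T}$, and second to identify the parallelly propagated frame vector $e_3$ with the ambient unit normal $N$ along $\mathcal{T}$.

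For the first part, at any point of $\mathcal{S}$ the vector $e_0$ is normal to $\Sigma_0$, and by the orthogonality of $\Sigma_0$ and $\mathcal{T}$ along $\mathcal{S}$, it lies in $T\mathcal{T}$. The propagation condition \eqref{geodgauge} includes ${\bf D}_{e_0}e_0=0$, so each integral curve of $e_0$ is a timelike ${\bf g}$-geodesic. Since it starts at a point of $\mathcal{S}\subset\mathcal{T}$ with initial velocity tangent to $\mathcal{T}$, and $\mathcal{T}$ is totally geodesic, this geodesic remains inside $\mathcal{T}$ throughout its interval of existence. Consequently a tubular neighbourhood of $\mathcal{S}$ in $\mathcal{T}$ is foliated by $e_0$-geodesics.

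For the second part, I claim that the unit normal $N$ satisfies ${\bf D}_X N=0$ for every $X\in T\mathcal{T}$. Indeed, the component of ${\bf D}_X N$ along $N$ equals $\tfrac{1}{2}X\cdot {\bf g}(N,N)=0$ since $|N|=1$, while the tangential component is determined by ${\bf g}({\bf D}_X N,Y)=\chi(X,Y)$ for $Y\in T\mathcal{T}$, which vanishes by hypothesis. Specializing to $X=e_0$, which is tangent to $\mathcal{T}$ by the first part, shows that $N$ is itself parallelly propagated along the $e_0$-geodesics in $\mathcal{T}$. Since $e_3$ satisfies the same parallel transport ODE ${\bf D}_{e_0}e_3=0$ with initial data $e_3|_{\mathcal{S}}=N$, uniqueness of solutions forces $e_3=N$ on $\mathcal{T}$.

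As a byproduct, parallel transport preserves $\mathbf{g}$-inner products, so $e_1,e_2$ remain orthogonal to $e_3=N$ along the flow and are therefore tangent to $\mathcal{T}$, confirming that the entire spatial frame is adapted to the boundary. No step presents a serious obstacle: the only nontrivial inputs are the standard fact that geodesics tangent to a totally geodesic submanifold stay in it, and the identity $\mathbf{D}N\!\restriction_{T\mathcal{T}}=0$, both immediate consequences of $\chi\equiv 0$; the rest is ODE uniqueness.
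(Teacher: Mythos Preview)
Your proof is correct and follows essentially the same approach as the paper: both arguments hinge on the fact that $\chi\equiv 0$ forces ambient parallel transport along $e_0$ to preserve the tangent/normal splitting of $T\mathcal{M}|_{\mathcal{T}}$, and then conclude by ODE uniqueness. The only cosmetic difference is that the paper first builds an auxiliary frame $(\tilde e_0,\tilde e_1,\tilde e_2,N)$ on $\mathcal{T}$ via the \emph{intrinsic} connection $\slashed{\nabla}$ and then checks that this frame also satisfies the \emph{ambient} propagation law \eqref{geodgauge}, whereas you argue directly that the $e_0$-geodesics stay in $\mathcal{T}$ and that $N$ is ${\bf D}$-parallel along them; your route is marginally more elementary since it avoids introducing $\slashed{\nabla}$, while the paper's packaging handles all four frame vectors in one stroke.
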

\begin{proof}
Define the tangential, orthonormal frame $\tilde{e}_0,\tilde{e}_1,\tilde{e}_2$ on $\mathcal{T}$ by the condition: 
\begin{align}\label{geodbd}
\slashed{\nabla}_{\tilde{e}_0}\tilde{e}_0=\slashed{\nabla}_{\tilde{e}_0}\tilde{e}_1=\slashed{\nabla}_{\tilde{e}_0}\tilde{e}_2=0,
\end{align}
where $\slashed{\nabla}$ is the covariant connection intrinsic to $\mathcal{T}$. Also, we impose that $\tilde{e}_0=e_0,\tilde{e}_1=e_1,\tilde{e}_2=e_2$ at ${}{\mathcal{S}}$.

Then, $\tilde{e}_0,\tilde{e}_1,\tilde{e}_2,N$ satisfy
\begin{align}\label{geodbd2}
{\bf D}_{\tilde{e}_0}\tilde{e}_0={\bf D}_{\tilde{e}_0}\tilde{e}_1={\bf D}_{\tilde{e}_0}\tilde{e}_2={\bf D}_{\tilde{e}_0}N=0,\qquad \text{on $\mathcal{T}$},
\end{align}
since the second fundamental form of $\mathcal{T}$ vanishes. Hence, the two set of frames $\tilde{e}_0,\tilde{e}_1,\tilde{e}_2,N$ and $e_0,e_1,e_2,e_3$ satisfy the same propagation equation and have the same initial configurations at ${}{\mathcal{S}}$. We arrive at the conclusion that they must coincide.
\end{proof}
\begin{lemma}[Boundary conditions for the orthogonal foliation]\label{lem:bdcond}
For the particular geodesic frame $e_0,e_1,e_2,e_3$ that is adapted to the boundary, as above, the vanishing of $\chi$ induces the following boundary conditions on $K_{ij},\Gamma_{ijb}$:
\begin{align}\label{bdcond}
K_{A3}=K_{3A}=\Gamma_{A3B}=\Gamma_{AB3}=0,
\end{align}
{}{satisfied on $\mathcal{T}$}, for every $A,B=1,2$. 
\end{lemma}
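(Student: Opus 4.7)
The proof should follow directly from the definitions \eqref{Kij} and \eqref{Gammaijb} of $K_{ij}$ and $\Gamma_{ijb}$, combined with the adaptation of the frame to $\mathcal{T}$ given by Lemma \ref{lem:adframe}. Since on $\mathcal{T}$ we have $e_3 = N$ and $e_0, e_1, e_2 \in T\mathcal{T}$ (hence all perpendicular to $N$), each of the four components in \eqref{bdcond} can be re-expressed as an evaluation of $\chi$ on tangential vectors, and then set to zero.

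First I would treat the two $\Gamma$ components. For $A, B \in \{1,2\}$, the definition gives $\Gamma_{A3B} = {\bf g}({\bf D}_{e_A} e_3, e_B) = {\bf g}({\bf D}_{e_A} N, e_B) = \chi(e_A, e_B) = 0$, directly from $\chi \equiv 0$ on $\mathcal{T}$. The companion identity $\Gamma_{AB3} = 0$ then follows either from the antisymmetry of $\Gamma_{ijb}$ in the last two indices (metric compatibility of $D$), or by a direct calculation parallel to the next step.

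Next for $K_{A3}$, we cannot evaluate $\chi$ directly since the derivative falls on $e_0$ rather than on $N$. Instead, I would differentiate the orthogonality relation ${\bf g}(e_0, N) = 0$ along the tangential direction $e_A$ on $\mathcal{T}$, obtaining
\begin{equation*}
0 = e_A\bigl({\bf g}(e_0, N)\bigr) = {\bf g}({\bf D}_{e_A} e_0, N) + {\bf g}(e_0, {\bf D}_{e_A} N) = K_{A3} + \chi(e_A, e_0),
\end{equation*}
where the last equality uses $e_0 \perp N$ so that $\chi(e_A, e_0)$ is well-defined. Since $\chi(e_A, e_0) = 0$, we conclude $K_{A3} = 0$. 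The remaining component $K_{3A}$ follows from the symmetry $K_{ij} = K_{ji}$, which holds on $\Sigma_0$ by the initial data prescription of Section \ref{subsec:ID} and, for a genuine solution to the Einstein equations (which is the setting of this section where $K$ is the true second fundamental form), holds everywhere.

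There is no substantial obstacle here: the entire content of the lemma is that the adapted-frame choice of Lemma \ref{lem:adframe} lines up the coordinate labels $3 \leftrightarrow N$ and $A \leftrightarrow T\mathcal{T}$ so that the tensorial vanishing $\chi \equiv 0$ translates cleanly into scalar identities for the connection coefficients. The only minor care point is the $K_{A3}$ case, where one must move the covariant derivative from $e_0$ onto $N$ via the orthogonality of $e_0$ and $N$ on $\mathcal{T}$ before invoking $\chi \equiv 0$.
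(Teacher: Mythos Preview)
Your proof is correct and follows essentially the same approach as the paper's. The paper writes the identities more compactly as $K_{A3}=K_{3A}={\bf g}({\bf D}_{e_A}e_0,e_3)=-\chi_{0A}$ and $\Gamma_{A3B}=-\Gamma_{AB3}={\bf g}({\bf D}_{e_A}e_3,e_B)=\chi_{AB}$, leaving implicit the differentiation of ${\bf g}(e_0,N)=0$ that you spell out explicitly for the $K_{A3}$ case.
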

\begin{proof}
The conditions \eqref{bdcond} follow from the relations
\begin{align}\label{bdid}
K_{A3}=K_{3A}={\bf g}({\bf D}_{e_A}e_0,e_3)=-\chi_{0A},\qquad\Gamma_{A3B}=-\Gamma_{AB3}={\bf g}({\bf D}_{e_A}e_3,e_B)=\chi_{AB},\qquad\text{on $\mathcal{T}$},
\end{align}
and the vanishing of $\chi$.
\end{proof}
\begin{lemma}[Compatibility conditions for the orthogonal foliation]\label{lem:compcond}
The initial data of $K_{ij},\Gamma_{ijb},f_i{}^j$ that correspond to the orthonormal frame in Lemma \ref{lem:adframe}, must satisfy corner conditions at ${}{\mathcal{S}}$ to all orders allowed in our energy spaces. The zeroth order conditions are the boundary conditions \eqref{bdcond}, which are induced by \eqref{compcond.angle}, while the first order conditions read:
\begin{align}\label{compcond}
\begin{split}
e_3\Gamma^B{}_{AB}=&\,e^B\Gamma_{3AB}-\Gamma^b{}_b{}^C\Gamma_{3AC},\qquad
e_3K_{22}=-2\Gamma_{31}{}^CK_{1C},\\
e_3K_{11}=&-2\Gamma_{32}{}^CK_{2C},\qquad
e_3K_{12}=\,\Gamma_{31}{}^CK_{2C}+\Gamma_{32}{}^CK_{1C}.
\end{split}
\end{align}
Higher order conditions can be derived by iteratively taking $e_0$ derivatives of \eqref{compcond}, plugging in the evolution equations of the corresponding variables and using the already derived lower order conditions.
\end{lemma}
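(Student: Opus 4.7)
The strategy is to extract \eqref{compcond} from the evolution equations \eqref{e0Kijnew} and \eqref{e0Gammaijbnew} evaluated on $\mathcal{T}$, by exploiting the fact that the zeroth-order conditions \eqref{bdcond} already hold along all of $\mathcal{T}$, not just at $\mathcal{S}$. By Lemma \ref{lem:adframe}, the tangent bundle of $\mathcal{T}$ is spanned by $e_0,e_1,e_2$, so tangential derivatives of any quantity vanishing identically on $\mathcal{T}$ are themselves zero on $\mathcal{T}$. In particular, $e_0K_{A3}$, $e_0\Gamma_{A3B}$, $e_AK_{B3}$, $e_A\Gamma_{B3C}$, as well as $K_A{}^c\Gamma_{c3B} = K_A{}^C\Gamma_{C3B}+K_A{}^3\Gamma_{33B}$ all vanish on $\mathcal{T}$ by \eqref{bdcond}. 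Restricting \eqref{compcond} to the corner $\mathcal{S}$ then yields the first-order compatibility conditions.

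The first three identities in \eqref{compcond} come from the component equation \eqref{e0Gammaijbnew} with $i=A$, $j=3$, $b=B$, $A,B\in\{1,2\}$. The LHS is zero on $\mathcal{T}$ by the previous paragraph. On the RHS, the term $e_B K_{3A}$ vanishes since $e_B$ is tangential to $\mathcal{T}$ and $K_{3A}\equiv0$ on $\mathcal{T}$; the quadratic products $\Gamma_{3B}{}^c K_{cA}$ and $\Gamma_{3A}{}^c K_{Bc}$ keep only the $c\in\{1,2\}$ contributions $\Gamma_{3B}{}^C K_{CA}$, $\Gamma_{3A}{}^C K_{BC}$, since $K_{c3}=0$ on $\mathcal{T}$ for $c\in\{1,2\}$; the terms $\Gamma_{B3}{}^c K_{cA}$ and $\Gamma_{BA}{}^c K_{3c}$ vanish after using antisymmetry $\Gamma_{ijb}=-\Gamma_{ibj}$ together with \eqref{bdcond}; and the $\delta_{AB}$ line reduces to $\delta_{AB}[e_3 K_{33} - e_3\mathrm{tr}K] = -\delta_{AB}[e_3 K_{11}+e_3 K_{22}]$, since all tangential $e^C$ derivatives of $K_{C3}$, $K_{l3}$ vanish and the remaining $\Gamma$-contractions drop out by antisymmetry. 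The resulting identity on $\mathcal{T}$ reads
\begin{equation*}
0 = e_3 K_{BA} - \Gamma_{3B}{}^C K_{CA} - \Gamma_{3A}{}^C K_{BC} - \delta_{AB}[e_3 K_{11}+e_3 K_{22}],
\end{equation*}
and setting $(A,B)\in\{(1,1),(2,2),(1,2)\}$ (using $K_{AB}=K_{BA}$) gives the three expressions for $e_3 K_{11}$, $e_3 K_{22}$, $e_3 K_{12}$ stated in \eqref{compcond}.

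The remaining identity, involving $e_3\Gamma^B{}_{AB}$, follows analogously from \eqref{e0Kijnew} with $i=3$, $j=A$ (so $\delta_{3A}=0$ kills the last line). The key bookkeeping point is to track which $e_3$-derivatives survive. A priori, $e_3\Gamma^b{}_{Ab}$ and $-e^b\Gamma_{3Ab}$ each contain a $b=3$ piece, producing $e_3\Gamma_{3A3}$ and $-e_3\Gamma_{3A3}$ respectively, and these cancel; the $b=3$ piece of $-e^b\Gamma_{A3b}$ is zero by antisymmetry $\Gamma_{A33}=0$. Consequently the only $e_3$-derivative that remains is $e_3\Gamma^B{}_{AB}$, together with the tangential contribution $-e^B\Gamma_{3AB}$. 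All other quadratic terms $\Gamma^b{}_3{}^c\Gamma_{cAb}$, $\Gamma^b{}_A{}^c\Gamma_{c3b}$, $\Gamma^b{}_b{}^c\Gamma_{A3c}$ and the tangential derivative $e_A\Gamma^b{}_{3b}$ vanish on $\mathcal{T}$ after expanding in the index ranges $\{1,2\}$ vs.\ $\{3\}$ and applying \eqref{bdcond} and antisymmetry; the only surviving quadratic contribution is $\Gamma^b{}_b{}^C\Gamma_{3AC}$ from the term $\Gamma^b{}_b{}^c\Gamma_{3Ac}$ (the $c=3$ contribution dies because $\Gamma^b{}_b{}^3 = \sum_b \Gamma_{bb3} = -\sum_b \Gamma_{b3b} = 0$ on $\mathcal{T}$). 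Collecting these produces exactly $e_3\Gamma^B{}_{AB} = e^B\Gamma_{3AB} - \Gamma^b{}_b{}^C\Gamma_{3AC}$.

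Higher-order compatibility conditions follow by induction on the order: given compatibility conditions to order $n$, one applies a tangential derivative $e_0$, $e_1$ or $e_2$ and uses \eqref{e0Kijnew}, \eqref{e0Gammaijbnew}, \eqref{e0fij}, \eqref{e0fijinv} to rewrite any $e_0$-derivatives in terms of spatial expressions; the resulting identities involve at most one more normal derivative $e_3$ and close up using the lower-order conditions. The main obstacle is purely combinatorial, namely the careful verification of which terms survive the cancellations from antisymmetry and the zeroth-order boundary conditions; once the index structure is understood as above, the argument is mechanical.
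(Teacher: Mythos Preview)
Your proof is correct and follows essentially the same route as the paper: you evaluate \eqref{e0Kijnew} with $i=3,j=A$ and \eqref{e0Gammaijbnew} with $i=A,j=3,b=B$ on the boundary, use \eqref{bdcond} to kill the left-hand sides and the tangential-derivative terms, and read off the first-order conditions. Two minor remarks: (i) the paper also includes the short verification that the zeroth-order conditions \eqref{bdcond} on $\mathcal{S}$ are induced by the angle conditions \eqref{compcond.angle} via the rotation \eqref{omega}, which you omit (though it follows from Lemmas \ref{lem:compcond.angle} and \ref{lem:bdcond}); (ii) your phrase ``the first three identities'' is slightly out of order relative to how \eqref{compcond} is displayed, but the mathematics is unaffected.
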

\begin{proof}
The zeroth order compatibility conditions are derived from the ones with an angle \eqref{compcond.angle}, using the rotation relations \eqref{omega}. We compute on $\mathcal{S}$:
\begin{align*}
K_{A3}=&\,{\bf g}({\bf D}_{e_A}e_0,e_3)={\bf g}({\bf D}_{e_A}(n\cosh\omega-\overline{N}\sinh\omega),-n\sinh\omega+\overline{N}\cosh\omega)\\
=&\,e_A\omega\sinh^2\omega +k(e_A,\overline{N})\cosh^2\omega-k(e_A,\overline{N})\sinh^2\omega-e_A\omega\cosh^2\omega\\
=&-e_A\omega+k(e_A,\overline{N})=0
\end{align*}
and 
\begin{align*}
\Gamma_{A3B}={\bf g}({\bf D}_{e_A}e_3,e_B)={\bf g}({\bf D}_{e_A}(-n\sinh\omega+\overline{N}\cosh\omega),e_B)=-k(e_A,e_B)\sinh\omega+h(\nabla_{e_A}\overline{N},e_B)=0.
\end{align*}
Together with the symmetry/antisymmetry of $K_{3A},\Gamma_{A3B}$, this proves our claim for the zeroth order corner conditions.

Restricting \eqref{e0Kijnew}, \eqref{e0Gammaijbnew}, for $i=3,j=A$ and $i=A,j=3,b=B$ respectively, to the intersection $\mathcal{S}$ and utilising \eqref{bdcond}, we obtain the equations:
\begin{align*}
0=e_0K_{3A}+\text{tr}KK_{3A}=&\,\frac{1}{2}\bigg[e_3\Gamma^b{}_{Ab}-e^b\Gamma_{3Ab}
{}{+\Gamma^b{}_3{}^c\Gamma_{cAb}+\Gamma^b{}_b{}^c\Gamma_{3Ac}}\\
\notag&+e_A\Gamma^b{}_{3b}-e^b\Gamma_{A3b}
{}{+\Gamma^b{}_A{}^c\Gamma_{c3b}+\Gamma^b{}_b{}^c\Gamma_{A3c}}\bigg]\\
=&\frac{1}{2}\bigg[e_3\Gamma^B{}_{AB}-e^B\Gamma_{3AB}
-\Gamma_3{}^{BC}\Gamma_{CAB}+\Gamma^b{}_b{}^C\Gamma_{3AC}-\Gamma_3{}^{BC}\Gamma_{BAC}\bigg]\\
0=e_0\Gamma_{A3B}+K_A{}^c\Gamma_{c3B}=&\,e_3K_{BA}-e_BK_{3A}-\Gamma_{3B}{}^cK_{cA}-\Gamma_{3A}{}^cK_{Bc}+\Gamma_{B3}{}^cK_{cA}+\Gamma_{BA}{}^cK_{3c}\\
&+\delta_{AB}\bigg[{}{e^cK_{c3}-\Gamma_c{}^{cb}K_{b3}-\Gamma^c{}_3{}^bK_{cb}}-e_3\text{tr}K\bigg]\\
=&\,e_3K_{BA}-\Gamma_{3B}{}^CK_{CA}-\Gamma_{3A}{}^CK_{BC}-\delta_{AB}e_3K_C{}^C
\end{align*}
which give the conditions \eqref{compcond}.
\end{proof}

\subsection{Local well-posedness for the initial boundary value problem}\label{se:lwibvp}

Let  $\mathcal{N} \subset \Sigma_0$ be a neighbhorhood of ${}{\mathcal{S}}$. In the following, we consider a Lorentzian manifold with boundary of the form $({}{\mathcal{D},\bf g})$ with ${}{\mathcal{D}}$ foliated by spacelike hypersurfaces $U_t$, ${}{\mathcal{D}}= \bigcup_{t \in [0,T]} U_t$, such that $U_0$ is diffeomorphic to $\mathcal{N}$ and $\partial {}{\mathcal{D}}= U_0 \cup \mathcal{T}{}{\cup \mathcal{H}\cup U_T}$, with $\mathcal{T}$ timelike and $\mathcal{H}$ ingoing null, $T>0$, as depicted in Figure \ref{fig-D}.
 \begin{figure}[H]
    \begin{center}
    \includegraphics[scale=1.2]{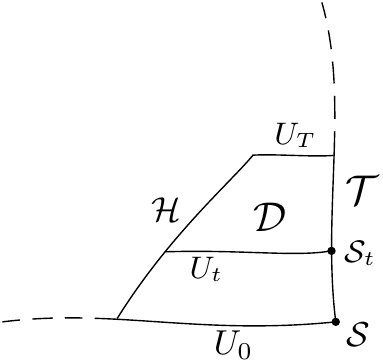}
    \caption{A local domain of dependence region near the boundary.}
    \label{fig-D}
    \end{center}
  \end{figure}
We assume that $({}{\mathcal{D},{\bf g}})$ is globally hyperbolic in the sense of a Lorentzian manifold with timelike boundary \cite{HFS}. In particular, we assume that ${}{\mathcal{D}}$ is such that given $p \in {}{\mathcal{D}}$, $J^{-}(p) \cap J^+ \left( U_0 \cup \mathcal{T} \right)$ is compact, so that all the computations below are well defined. We will prove high order energy estimates on ${}{{\bf g}}$ assuming it solves the Einstein equations with the corresponding initial and boundary data. These a priori estimates can then be upgraded via a Picard iteration to obtain the existence of a solution.

We denote by $\underline e$ any of the derivatives tangential to the boundary $e_0, e_1, e_2$. 
We consider the following modified Sobolev space, denoted $B^s$, which, for a given $s$, contains $[\frac{s}{2}]$ {}{normal} derivatives compared to $s$ tangential derivatives, $[\frac{s}{2}]$ being the integer part of $\frac{s}{2}$. 
%
\begin{definition} \label{def:Bs}
Let $s \in \mathbb{N}$. For any $u \in L^\infty_t L^2(U_t)$, we consider the following energy norm on each slice $U_t$, 
$$\|u\|_{B^s(U_t)}:=\sum_{ | I_1 |+ 2 |I_2| \le s  } \| \underline e^{I_1} e^{I_2}_3u \|_{L^2(U_t)},$$
and the corresponding energy space
\begin{align}\label{Bs}
B^s=\left\{ u \in L^\infty \left( [0,T]; L^2(U_t)\right) \,: \sup_{\,\,\, t \in [0,T] } \mathrm{ess}\sum_{ | I_1 |+ 2 |I_2| \le s  } \| \underline e^{I_1} e^{I_2}_3u \|_{L^2(U_t)}  < + \infty \right\}. 
\end{align}
\end{definition}

\begin{remark}
The need for the $B^s$ 
spaces is dictated by the form \eqref{hypsymm} of the modified ADM system, which only allows the control of roughly half the number of normal derivatives compared with the number of tangential derivatives in $L^2$. Here is where the definition of the norms with respect to $e_\mu$ vector fields becomes particularly useful. Note that we have also included time derivatives in the norms.
\end{remark}
\begin{lemma}\label{lem:Bs}
Let $s\ge6$ and let  
\begin{align*}
\|v\|_{B^s(U_t)}\leq C_0,&&v=K_{ij},\Gamma_{ijb},f_i{}^j,f^b{}_j,
\end{align*}
for all $0\leq t\leq T$.
Changing the order of the tangential and normal derivatives in the definition \eqref{Bs} gives an equivalent norm, up to a constant depending on $C_0$. More precisely, the following inequality holds true
\begin{align}\label{orderBs}
\| e^Iu \|_{L^2(U_t)}\leq C\sum_{|I_1|\leq r}\sum_{|I_2|\leq m} \| \underline e^{I_1} e^{I_2}_3u \|_{L^2(U_t)},
\end{align}
for any $I$ that consists of $r$ tangential and $m$ $e_3$ derivatives, $r+2m\leq s$. The constant $C$ is of the form $C=D+TC_0^2$, where $D>0$ depends only on initial norms.
\end{lemma}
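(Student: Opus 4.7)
\medskip

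\noindent\textbf{Proof proposal.} The plan is to proceed by induction on the total number of derivatives $|I|=r+m$, repeatedly applying the commutation formulas \eqref{comm}
\[
[e_3,e_0]=-K_3{}^c e_c,\qquad [e_3,e_j]=f^d{}_c(e_3 f_j{}^c)e_d-f^d{}_c(e_j f_3{}^c)e_d,
\]
to move every occurrence of $e_3$ in $e^I$ to the right of every $\underline e$. Given a word $e^{(1)}e^{(2)}\cdots e^{(|I|)}$ representing $e^I$, each ``bad'' adjacent pair of the form $e_3\underline e$ is replaced using $e_3\underline e=\underline e\, e_3+[e_3,\underline e]$. The first term keeps the total number of derivatives on $u$ unchanged (one swap closer to the desired order), while the commutator term loses exactly one derivative from $u$, at the price of multiplying by a coefficient polynomial in $K$, $f$, $f^{-1}$ and one derivative thereof. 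Iterating this procedure yields, after finitely many steps, an expansion
\[
e^I u\;=\;\sum_{\alpha}\,\mathcal C_\alpha\cdot \underline e^{I_1^{(\alpha)}}e_3^{I_2^{(\alpha)}}u,
\]
in which each coefficient $\mathcal C_\alpha$ is a product of factors among $K$, $\Gamma$, $f$, $f^{-1}$ and their derivatives (arising from successive applications of \eqref{comm}), and the multi-indices satisfy $|I_1^{(\alpha)}|+2|I_2^{(\alpha)}|\le r+2m$. The weight inequality is the key bookkeeping fact: a single commutation strictly decreases the number of derivatives on $u$, so the $B^s$-weight of the remaining expression on $u$ cannot exceed the initial weight $r+2m$.

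The second step is to estimate each $\mathcal C_\alpha$ in $L^\infty(U_t)$. When derivatives of the coefficient factors appear (as when several commutators have been applied in a row, so that further commutators act on the coefficients produced by earlier ones), we distribute them using the Leibniz rule and use the Sobolev inequalities of Lemma \ref{lem:Hs}. The hypothesis $s\ge 6$ guarantees that each factor in any $\mathcal C_\alpha$ carries at most $s-2$ derivatives counted with $B^s$-weight, so that whichever factor carries the most derivatives can be placed in $L^2$ while all others are placed in $L^\infty$ via $H^2\hookrightarrow L^\infty$; the $L^\infty$ factors are controlled by $C_0$ through \eqref{Sob}.

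The splitting $C=D+TC_0^2$ comes from decomposing each $L^\infty$ factor at time $t$ as
\[
\|v(t)\|_{L^\infty(U_t)}\le \|v(0)\|_{L^\infty(U_0)}+\int_0^t\|e_0 v(\tau)\|_{L^\infty(U_\tau)}\,d\tau,
\]
for $v\in\{K_{ij},\Gamma_{ijb},f_i{}^j,f^b{}_j\}$. The first term contributes the constant $D$, depending only on initial data. For the second, we substitute the modified ADM evolution equations \eqref{e0Kijnew}, \eqref{e0Gammaijbnew}, \eqref{e0fij}, \eqref{e0fijinv}, whose right-hand sides are (at worst) quadratic in $K,\Gamma,f,f^{-1}$ and their first spatial derivatives; by \eqref{Sob} each such quadratic expression is bounded in $L^\infty$ by $C_0^2$, producing the $TC_0^2$ contribution after integration in $\tau\in[0,t]$.

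The principal technical obstacle is the combinatorial bookkeeping of multi-commutators: when one commutator is followed by another, the derivatives generated by the second can fall either on the remaining derivatives of $u$ or on the coefficient produced by the first, and one must verify that at each stage (i) the weight $|I_1|+2|I_2|$ on the $u$-factor never exceeds the original $r+2m$, and (ii) sufficient derivatives remain available to place every coefficient factor in $L^\infty$ via Lemma \ref{lem:Hs}. Both properties follow from an elementary induction on $|I|$ once the basic commutation step described above is set up, and the threshold $s\ge 6$ is precisely what is required for the Sobolev step in three spatial dimensions (accounting for the weight $2$ assigned to each normal derivative in the definition of $B^s$).
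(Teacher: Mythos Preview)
Your induction-on-$|I|$ strategy via the commutators \eqref{comm} is exactly the paper's approach. Two points deserve comment.

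First, your product estimate for the commutator terms is less sharp than the paper's. After one swap you have (schematically) $e^{J_1}(K\ast e\,e^{J_2}u)$ or $e^{J_1}(f\ast ef\ast e\,e^{J_2}u)$ with $|J_1|+|J_2|=|I|-2$, and you then distribute $e^{J_1}$ by Leibniz. You propose to always place the highest-order factor in $L^2$ and the remaining ones in $L^\infty$; but the conclusion \eqref{orderBs} forces the $u$-factor into $L^2$, so when a coefficient factor carries comparably many derivatives your $L^\infty$–$L^2$ split is not available. The paper handles this ``balanced'' case by an $L^4$–$L^4$ estimate via the second inequality in \eqref{Sob}, reserving $L^\infty$–$L^2$ for the unbalanced case and using the inductive hypothesis on the $u$-factor in both. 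Your text does not account for this case.

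Second, your derivation of the constant form $C=D+TC_0^2$ differs from the paper's and is more roundabout. You use $\|v(t)\|_{L^\infty}\le \|v(0)\|_{L^\infty}+\int_0^t\|e_0 v\|_{L^\infty}$ together with the evolution equations to bound $\|e_0 v\|_{L^\infty}$ by a quadratic $C_0^2$. The paper instead works at the $L^2$ level and exploits that $e_0$ is itself a \emph{tangential} derivative: for any $e^{I'}$ with one fewer than the maximal number of tangential derivatives,
\[
\|e^{I'}v\|_{L^2(U_t)}^2\le \|e^{I'}v\|_{L^2(U_0)}^2+\int_0^t 2\|e^{I'}v\|_{L^2(U_\tau)}\|e_0 e^{I'}v\|_{L^2(U_\tau)}\,d\tau\le D+TC_0^2,
\]
since $e_0 e^{I'}v$ has $B^s$-weight at most $s$ and is therefore directly bounded by $C_0$. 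This avoids invoking the nonlinear structure of \eqref{e0Kijnew}--\eqref{e0fijinv} and works uniformly for the differentiated coefficient factors, which your $L^\infty$ argument (stated only for undifferentiated $v$) does not obviously cover.
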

\begin{proof}
We argue by induction in $|I|=r+m$. According to \eqref{comm}, a commutation between a tangential and a normal derivative in $e^Iu$ gives terms of the form 
\begin{align*}
e^{J_1}[\underline{e},e_3]e^{J_2}u=e^{J_1}(K*ee^{J_2}u)+e^{J_1}(f*ef*ee^{J_2}u),\qquad |J_1|+|J_2|=|I|-2
\end{align*}
There are two distinct cases for bounding the $L^2$ norm of the previous RHS. 
\begin{itemize}
\item The derivatives are relatively equally distributed among the corresponding factors, in which case we can bound their $L^2$ norms using the $L^4$ estimate in \eqref{Sob} and the inductive step. 

\item Most derivatives hit one of the factors, in which we can apply \eqref{Sob} to the lowest order factor and use the inductive step.
\end{itemize}
The assumption $s\ge6$ allows for a pointwise bound on the factor $ef$ via \eqref{Sob}. The dependence of the constant $C$ in $D,C_0$ comes from the use of the following basic estimate:
\begin{align*}
\|e^{I'}v\|_{L^2(U_t)}^2\overset{C-S}{\leq}\|e^{I'}v\|_{L^2(U_0)}^2+\int^t_02\|e^{I'}v\|_{L^2(U_\tau)}\|e_0e^{I'}v\|_{L^2(U_\tau)}d\tau\leq D+TC_0^2
\end{align*}
to the terms $v=(K_{ij},\Gamma_{ijb},f_i{}^j,f^b{}_j)$ that have one less than maximum number of tangential derivatives.
\end{proof}

\begin{proposition}\label{prop:bdlocex}
Under the boundary and compatibility conditions 
\eqref{bdcond}, \eqref{compcond}, the system \eqref{e0Kijnew}, \eqref{e0Gammaijbnew}, \eqref{e0fij}, \eqref{e0fijinv} is locally well-posed in   $L^\infty_t B^s(U_t)$, for $s\ge7$. 
\end{proposition}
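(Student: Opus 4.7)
The strategy is to adapt the proof of Proposition \ref{prop:locex} to the IBVP by (i) commuting only with derivatives $\underline{e}\in\{e_0,e_1,e_2\}$ tangential to the boundary so that the homogeneous boundary conditions \eqref{bdcond} are preserved, (ii) checking that the flux arising on $\mathcal{T}$ in the higher-order energy identity vanishes under these conditions, and (iii) using the symmetric structure \eqref{hypsymm} of the modified system to recover normal derivatives algebraically, thereby closing the estimate in the anisotropic space $B^s$.

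First I would commute the modified system \eqref{e0Kijnew}-\eqref{e0Gammaijbnew} with purely tangential combinations $\underline{e}^{I_1}$, $|I_1|\le s$. Since each $\underline{e}$ is tangent to $\mathcal{T}$, the conditions \eqref{bdcond} propagate to $\underline{e}^{I_1}K_{A3}=\underline{e}^{I_1}\Gamma_{A3B}=0$ on $\mathcal{T}$ at the level of the commuted unknowns. The higher-order energy identity, derived exactly as in Lemma \ref{lem:loss2} but with $e^I$ replaced by $\underline{e}^{I_1}$, then yields upon integration over $U_t$ the same favorable bulk estimate as in Proposition \ref{prop:locex}, together with boundary fluxes on $\partial U_t$. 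On the ingoing null piece $\mathcal{H}\cap\Sigma_t$ these fluxes are treated exactly as in \eqref{enineq5}-\eqref{enineq6}. On $\mathcal{T}$, where the outward spatial normal coincides with $e_3$, the mixed boundary integrand reduces to
\begin{equation*}
\underline{e}^{I_1}K^i{}_3\,\underline{e}^{I_1}\Gamma^b{}_{ib}-\underline{e}^{I_1}\text{tr}K\,\underline{e}^{I_1}\Gamma^b{}_{3b}-\underline{e}^{I_1}\Gamma^{ij}{}_3\,\underline{e}^{I_1}K_{ji}.
\end{equation*}
Splitting each index sum into contributions with indices ranging over $\{1,2\}$ versus the index $3$, and combining $\underline{e}^{I_1}K_{A3}=\underline{e}^{I_1}\Gamma_{A3B}=0$ on $\mathcal{T}$ with the antisymmetry $\Gamma_{i33}=0$, every contribution is seen to vanish, so the timelike boundary produces no uncontrolled term and the quadratic fluxes admit a Gronwall closure for $\sum_{|I_1|\le s}\|\underline{e}^{I_1}u\|_{L^2(U_t)}$.

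To upgrade this tangential control to the full $B^s$ norm I would then exploit the structure of \eqref{hypsymm}: each evolution equation in \eqref{e0Kijnew}-\eqref{e0Gammaijbnew} expresses $e_0$ of a component as $\pm e_3$ of a companion component plus tangential and zeroth-order terms, so $e_3$ derivatives of the companion components can be extracted algebraically as tangential derivatives modulo nonlinearities. Commuting these inverted relations with $\underline{e}^{I_1}e_3^{|I_2|-1}$ and treating the commutators $[e_3,\underline{e}]$, $[e_3,e_0]$ via \eqref{comm} and Lemma \ref{lem:Bs}, one obtains inductively in $|I_2|$ a bound for $\|\underline{e}^{I_1}e_3^{I_2}u\|_{L^2(U_t)}$ whenever $|I_1|+2|I_2|\le s$. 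The loss of two tangential orders per gained normal order reflects both the shift of derivative type across the equation and the extraction of derivatives of the frame coefficients arising in the commutators, themselves governed by \eqref{e0fij}-\eqref{e0fijinv}. Combining these a priori estimates with a Picard iteration, whose iterates are constructed so as to satisfy \eqref{bdcond} and the iterated higher-order analogues of \eqref{compcond} by construction, yields the claimed local well-posedness.

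The main obstacle I anticipate is twofold: the case analysis needed to verify the vanishing of each $\mathcal{T}$-boundary contribution, and, more seriously, the consistent closure of the nonlinear estimates after the algebraic recovery of normal derivatives, since these steps entangle $K_{ij},\Gamma_{ijb}$ with the frame coefficients $f_i{}^j,f^b{}_j$ through the commutators \eqref{comm}. The threshold $s\ge 7$ is dictated by the need to control $ef$ pointwise through \eqref{Sob} while retaining enough regularity for Lemma \ref{lem:Bs} to be applicable, so that reordering tangential and normal derivatives in the $B^s$ norms does not itself induce a derivative loss.
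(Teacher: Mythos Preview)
Your Steps (i) and (ii) match the paper's argument and are correct: commuting only with tangential $\underline{e}^{I_1}$ preserves \eqref{bdcond}, and the boundary integrand on $\mathcal{T}$ indeed vanishes termwise exactly as you describe (this is the paper's computation \eqref{bdterms}).

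Step (iii), however, contains a genuine gap. Your claim that ``each evolution equation in \eqref{e0Kijnew}--\eqref{e0Gammaijbnew} expresses $e_0$ of a component as $\pm e_3$ of a companion component plus tangential and zeroth-order terms'' is false for the five components
\[
\mathcal{B}=\{K_{33},\ \Gamma_{313},\ \Gamma_{323},\ \Gamma_{312},\ \Gamma_{123}-\Gamma_{213}\}.
\]
Inspecting \eqref{hypsymm}, the $e_0$-equations for these contain \emph{no} $e_3$-derivative whatsoever, and moreover no element of $\mathcal{B}$ ever appears under $e_3$ anywhere in \eqref{hypsymm}. Hence $e_3$-derivatives of $\mathcal{B}$-components cannot be extracted algebraically from the system, and a purely algebraic induction as you propose cannot close.

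The paper's resolution is to split the unknowns into the ``good set'' $\mathcal{G}$ (the remaining ten components, whose $e_3$-derivatives do appear and can be solved for) and the ``bad set'' $\mathcal{B}$, and to treat them asymmetrically. From $e_0\Psi_{\mathcal{G}}=e_3\Psi_{\mathcal{G}}+e_A\Psi_{\mathcal{B}}+\Psi*\Psi$ one recovers $\underline{e}^{I_1}e_3^{m+1}\Psi_{\mathcal{G}}$ algebraically, deliberately with one \emph{extra} tangential derivative, i.e.\ at level $|I_1|+2(m+1)=s+1$. Then, since $e_0\Psi_{\mathcal{B}}=e_A\Psi_{\mathcal{G}}+\Psi*\Psi$, applying $\underline{e}^{I_1}e_3^{m+1}$ gives $e_0(\underline{e}^{I_1}e_3^{m+1}\Psi_{\mathcal{B}})$ equal to $e_A$ of the already-controlled $\underline{e}^{I_1}e_3^{m+1}\Psi_{\mathcal{G}}$ plus lower order, and $\underline{e}^{I_1}e_3^{m+1}\Psi_{\mathcal{B}}$ is obtained by \emph{time integration and Gronwall}, not by algebraic inversion. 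This good/bad decomposition and the two-tiered mechanism (algebraic for $\mathcal{G}$, evolutionary for $\mathcal{B}$) is the missing structural idea in your proposal.
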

\begin{proof}
We prove an energy estimate in the space $ L^\infty{}{(}[0, T]; B^s)$, for $s\ge7$ and $T$ sufficiently small. To this end, we proceed by a bootstrap argument and assume that we have a smooth solution $u$ on $[0,T]$, for some $T > 0$, satisfying
\begin{align}\label{bootstrap} 
\qquad\|u\|_{B^s {(U_t)}} \le {}{C_0},
\end{align}
for all $0 \le t \le T$. {}{The slices $U_t$ correspond now to the neighbourhood of a point at the boundary $\Sigma_0\cap\mathcal{T}$.}
We will upgrade this type of non-quantitative estimate into a quantitative one depending only on the initial data. This is the kind of estimate that is required to then prove existence and uniqueness of solutions via a Picard iteration scheme. {}{As for the solution to the linear problem required at each step in the iteration, this follows by a duality argument, using the symmetry of the system \eqref{hypsymm}, combined with the fact that the boundary terms in a usual energy argument for \eqref{e0Kijnew}-\eqref{e0Gammaijbnew} vanish by virtue of the boundary conditions \eqref{bdcond}, see \eqref{bdterms} below. This implies that the dual system has the same form, satisfying the same homogeneous Dirichlet boundary conditions, as in \eqref{bdcond}, for the corresponding dual variables.}

{}{\pfstep{Step~1: Estimates for tangential derivatives}} Consider $I$ a multi-index of order $|I|\le s$, such that $e^I=\underline e^I$ does not contain any $e_3$ derivative. We repeat the energy argument of Proposition \ref{prop:locex}, where we {}{use the differentiated equations \eqref{eq:diff.K}-\eqref{eq:diff.Gamma} for $e^I=\underline e^I$.} Note that by the above bootstrap assumption and since $s\ge 7$, the Sobolev inequalities used to control the $L^\infty,L^4$ norms of certain terms are still applicable. In particular, the error terms generated by the various integration by parts (due to $e_i$ not being Killing) can be controlled in this way and then absorbed by choosing $T$ sufficiently small depending only on the norm of the initial data. 

We examine now the arising $\mathcal{T}$-boundary terms in the energy argument. Going back to the fourth line of \eqref{enineq}, we notice that these terms are 
\begin{align}\label{bdterms}
&\notag\int_{{}{\mathcal{S}_t}} \underline e^IK_3{}^j \underline e^I\Gamma^b{}_{jb}-\underline e^I\text{tr}K\underline e^I\Gamma^b{}_{3b}-\underline e^I\Gamma^{ij}{}_3\underline e^IK_{ij}\mathrm{vol}_{{}{\mathcal{S}_t}}\\
=&\int_{{}{\mathcal{S}_t}}\underline e^IK_{33}\underline e^I\Gamma^B{}_{3B}+\underline e^IK_3{}^A\underline e^I\Gamma^B{}_{AB}-\underline e^I\text{tr}K\underline e^I\Gamma^B{}_{3B}-\underline e^I\Gamma^{3B}{}_3e^IK_{3B} -\underline e^I\Gamma^{AB}{}_3\underline e^IK_{AB} \mathrm{vol}_{{}{\mathcal{S}_t}},
\end{align}
for $|I|\leq s$. Since $\underline e^I$ is tangential, we infer by \eqref{bdcond} that all boundary terms vanish.

Since we commute only with tangential derivatives, the error terms corresponding to \eqref{enineq2}-\eqref{enineq3} take the form 
\begin{align*}
\sum_{|I_1|+|I_2|=|I|-1}\int_{U_t}\underline{e}^IK*\underline{e}^{I_1}K*\underline{e}^{I_2}eK\mathrm{vol}_{U_t}+\sum_{|I_1|+|I_2|+| I_3|=|I|-1}\int_{U_t}\underline e^IK*\underline e^{I_1}f*\underline e^{I_2}ef*\underline e^{I_3}e\Gamma\mathrm{vol}_{U_t}
\end{align*}
Thus, they contain at most one $e_3$ derivative and can be handled using
\eqref{Sob} (since we include at least three normal derivatives
in our $B^s$ norms, $s\ge7$) and the bootstrap assumption. We make use of the $L^4$ estimate only for the second term, in the case $|I_1|=0$, where both $e=e_3$.

\pfstep{Step~2: Consequences of the bootstrap assumption} First, we note that a standard energy argument for \eqref{eq:diff.f}-\eqref{eq:diff.finv} gives the desired estimate for the part of the norm involving $f_i{}^j,f^b{}_j$, making use only of the bootstrap assumption:
\begin{align}\label{ind:fij}
\sum_{i,b,j=1}^3\sup_{t\in[0,T]}(\|\underline{e}^{I_1} e_3^{I_2} f_i{}^j \|_{L^2(U_t)}+\|\underline{e}^{I_1} e_3^{I_2} f^b{}_j \|_{L^2(U_t)})\leq D_f,
\end{align}
where $D_f$ depends on initial norms and $C_0^2T$. The latter can be made smaller than a universal constant by taking $T>0$ sufficiently small. 

Moreover, any term having a less than top order number of tangential derivatives can also be bounded in $L^2$ by using only the bootstrap assumption in the following manner:
\begin{align}
\notag\|\underline e^{I_1'}e_3^{I_2}u\|^2_{L^2(U_t)}\leq&\,\|\underline e^{I_1'}e_3^{I_2}u\|^2_{L^2(U_0)}+\int^t_02\|\underline e^{I_1'}e_3^{I_2}u\|_{L^2(U_\tau)}\|\partial_\tau \underline e^{I_1'}e_3^{I_2}u\|_{L^2(U_\tau)}d\tau\\
\tag{$|I'_1|<s-2|I_2|$}\leq&\,\|\underline e^{I_1'}e_3^{I_2}u\|^2_{L^2(U_0)}+TC_0^2\\
\label{lessthantop}\leq&\,D_{\mathrm{low}}
\end{align}
for some $T>0$ sufficiently small, where $D_{\mathrm{low}}$ denotes a constant depending on the initial $L^2$ norms of $\partial^{I_1'}\partial^{I_2}u,\partial_t\partial^{I_1'}\partial^{I_2}u$. 

Thus, matters are reduced to estimating the top order $K_{ij},\Gamma_{ijb}$ terms.

{}{\pfstep{Step~3: Induction for the normal derivatives}} To complete the energy argument, we must estimate the norms $\| \underline{e}^{I_1} e_3^{I_2} u \|_{L^2}$, with $ |I_1| +2 |I_2| = s$, $u=K,\Gamma$. 

We proceed by induction in $|I_2|$. {\it Step 1} above shows in particular that one can deal with $|I_2|=0$. Let $m\le{}{[\frac{s}{2}]-1}$ and assume that, for all multi-indices $I_1$ and $I_2$ verifying
$ |I_1| +2 |I_2| = s $, $|I_2| \le m$, we have a bound of the form 
\begin{equation}
{}{\sup_{t\in[0,T]}}\| \underline e^{I_1} e^{I_2}_3 u \|_{L^2(U_t)} \le {}{D_m},  \label{ind:h1}
\end{equation}
{}{for some $T>0$ sufficiently small,} where ${}{D_m}$ depends only on the {}{$B^s$-norm of the} initial data of {}{$(K_{ij},\Gamma_{ijb},f_i{}^j,f^b{}_j)$, the number of derivatives $s,m$, and is independent of $C_0$. We will derive \eqref{ind:h1} for $|I_2|=m+1$}.  

Let us split the variables $K_{ij},\Gamma_{ijb}$ into two sets:
\begin{enumerate}
\item the good set
\begin{equation}
\label{def:gs}
\mathcal{G}=\left \{ K_{11},K_{12},K_{22}, K_{31},K_{32},\Gamma_{112},\Gamma_{212}, \Gamma_{113}, \Gamma_{223}, \Gamma_{123}+\Gamma_{213} \right\},
\end{equation}
\item the bad set
\begin{equation}
\label{def:bs}
\mathcal{B}=\left \{ K_{33}, \Gamma_{313},\Gamma_{323}, \Gamma_{312}, \Gamma_{123}-\Gamma_{213}\right\}.
\end{equation}
\end{enumerate}
{}{According to the composition of the system \eqref{e0Kijnew}-\eqref{e0Gammaijbnew} identified in \eqref{hypsymm}, we have the following two schematic types of equations
\begin{align}
\label{e0PsiG}e_0\Psi_{\mathcal{G}}=&\,e_3\Psi_{\mathcal{G}}+e_A\Psi_{\mathcal{B}}+\Psi*\Psi,\\
\label{e0PsiB}e_0\Psi_{\mathcal{B}}=&\,e_A\Psi_{\mathcal{G}}+\Psi*\Psi,
\end{align}
satisfied by $\Psi_{\mathcal{G}}\in\mathcal{G},\Psi_{\mathcal{B}}\in\mathcal{B},\Psi\in\mathcal{G}\cup\mathcal{B}$, where we note that $e_3\Psi_{\mathcal{G}}$ corresponds to a single representative of the good set, whereas $e_A\Psi_{\mathcal{B}},e_A\Psi_{\mathcal{G}}$ could be an algebraic combination of more than one terms from the corresponding sets.

Starting with \eqref{e0PsiG}, we differentiate the equation in $\underline{e}^{I_1}e_3^{I_2'}$, $|I_1|+2|I_2'|=s-1$, $|I_2'|=m$:
\begin{align}\label{diff.PsiG}
\underline{e}^{I_1}e_3^{I_2'}e_3\Psi_{\mathcal{G}}=\underline{e}^{I_1}e_3^{I_2'}e_0\Psi_{\mathcal{G}}-\underline e^{I_1}e_3^{I_2'}e_A\Psi_{\mathcal{B}}-\underline{e}^{I_1}e_3^{I_2'}(\Psi*\Psi)
\end{align}
Note that the derivatives acting on $\Psi_{\mathcal{G}}$ in the LHS contain one more tangential derivative than what we need for the desired estimate on $\Psi_{\mathcal{G}}$.\footnote{We called $\mathcal{G}$ the good set, because its variables have in fact additional regularity than required.} It is necessary to include this extra tangential derivative in order to infer the bound on $\Psi_{\mathcal{B}}$ directly below.

The first two terms in the RHS of \eqref{diff.PsiG} are at the level of the inductive assumption \eqref{ind:h1}. To bound the $L^2$ norm of the third term we employ \eqref{Sob} as follows:
\begin{align}\label{ind:nonlin}
\notag\|\underline{e}^{I_1}e_3^{I_2'}(\Psi*\Psi)\|_{L^2(U_t)}\leq&\, \|\Psi\underline{e}^{I_1}e_3^{I_2'}\Psi\|_{L^2(U_t)}+\sum_{|J_2|,|L_2|<m}\|\underline{e}^{J_1}e_3^{J_2}\Psi*\underline{e}^{L_1}e_3^{L_2}\Psi\|_{L^2(U_t)}\\
\overset{C-S}{\leq}&\,\|\Psi\|_{H^2(U_t)}\|\underline{e}^{I_1}e_3^{I_2'}\Psi\|_{L^2(U_t)}+\sum_{|J_2|,|L_2|<m}\|\underline{e}^{J_1}e_3^{J_2}\Psi\|_{H^1(U_t)}\|\underline{e}^{L_1}e_3^{L_2}\Psi\|_{H^1(U_t)}\\
\tag{by {\it Step 2}}\leq&\,D_{\mathrm{low}}^2
\end{align}
This implies an $L^2$ estimate for $\underline e^{I_1}e_3^{I_2}\Psi_{\mathcal{G}}$, $|I_1|+2|I_2|=s+1$, $|I_2|=m+1$, in accordance with \eqref{ind:h1} (choosing $D_{m+1}\ge D_{\mathrm{low}}^2+D_m$). As we remarked above, the previous term contains one tangential derivative more than required. The desired estimate for $|I_1|+2|I_2|=s$, $|I_2|=m+1$ is in fact simpler.

For $\Psi_{\mathcal{B}}$ we apply $\underline{e}^{I_1}e_3^{I_2}$, $|I_1|+2|I_2|=s$, $|I_2|=m+1$ to \eqref{e0PsiB}:
\begin{align}
\label{diff.PsiB}\underline{e}^{I_1}e_3^{I_2}e_0\Psi_{\mathcal{B}}=\underline{e}^{I_1}e_3^{I_2}e_A\Psi_{\mathcal{G}}+\underline{e}^{I_1}e_3^{I_2}(\Psi*\Psi),
\end{align}
The first term in the RHS has just being controlled in $L^2$ (cf. Lemma \ref{lem:Bs}). The argument for the second term is as in \eqref{ind:nonlin}, only now the final RHS becomes 
\begin{align}\label{ind:nonlin2}
\|\underline{e}^{I_1}e_3^{I_2}(\Psi*\Psi)\|_{L^2(U_t)}\leq D_{\mathrm{low}}^2+ D_{\mathrm{low}} \|\underline{e}^{I_1}e_3^{I_2}\Psi\|_{L^2(U_t)}\leq D_{\mathrm{low}}D_{m+1}+D_{\mathrm{low}} \|\underline{e}^{I_1}e_3^{I_2}\Psi_{\mathcal{B}}\|_{L^2(U_t)}
\end{align}
Thus, combining with Lemma \ref{lem:Bs}, we have the bound 
\begin{align}\label{ind:PsiB}
\|e_0\underline{e}^{I_1}e_3^{I_2}\Psi_{\mathcal{B}}\|_{L^2(U_t)}\leq D_{\mathrm{low}}D_{m+1}+D_{\mathrm{low}} \|\underline{e}^{I_1}e_3^{I_2}\Psi_{\mathcal{B}}\|_{L^2(U_t)}
\end{align}
The first line in \eqref{lessthantop} then gives the estimate
\begin{align}\label{ind:PsiB2}
\sum_{\Psi_{\mathcal{B}}\in\mathcal{B}}\|\underline{e}^{I_1}e_3^{I_2}\Psi_{\mathcal{B}}\|_{L^2(U_t)}^2\leq D+\sum_{\Psi_{\mathcal{B}}\in\mathcal{B}}\int^t_0\|\underline{e}^{I_1}e_3^{I_2}\Psi_{\mathcal{B}}\|_{L^2(U_\tau)}\big(D_{\mathrm{low}}D_{m+1}+D_{\mathrm{low}} \|\underline{e}^{I_1}e_3^{I_2}\Psi_{\mathcal{B}}\|_{L^2(U_\tau)}\big)d\tau
\end{align}
Employing Gronwall's inequality, we obtain the desired estimate for $\Psi_{\mathcal{B}}$ by taking $T>0$ sufficiently small. This completes the proof of the proposition.}
\end{proof}
\section{A solution to the EVE}\label{sec:EVEsol}

In this section we show that the solution of the modified reduced system, with initial data as in Section \ref{subsec:ID}, either for the standard Cauchy problem or for the boundary value problem, subject to the conditions in Lemmas \ref{lem:bdcond}, \ref{lem:compcond}, is in fact a solution to the EVE, see Proposition \ref{prop:EVEsol} and the conclusion in Section \ref{subsec:finstep}. Thus, completing the proofs of Theorems \ref{thmA}, \ref{thmB}.

\subsection{The geometry of a solution to the reduced equations}  \label{se:bictc}

Having solved \eqref{e0Kijnew}, \eqref{e0Gammaijbnew}, \eqref{e0fij}, \eqref{e0fijinv} for $K_{ij},\Gamma_{ijb},f_i{}^j$, $f^b{}_j$, we declare that $e_0=\partial_t$, together with 
$e_1,e_2,e_3$ given by \eqref{fij}, constitute an orthonormal frame. This completely determines the spacetime metric ${\bf g}$, which splits in the form \eqref{metric}. We then need to verify that the variables $K_{ij},\Gamma_{ijb}$ are indeed the second fundamental form of the $t$-slices and spatial connection coefficients of the orthonormal frame we have just defined, with respect to the Levi-Civita connection ${\bf D}$ of ${\bf g}$. In fact, this must be derived at the same time with the vanishing of the spacetime Ricci tensor, confirming that the solution of the reduced system is in fact a solution of the EVE.

For this purporse, we define the connection ${\bf \widetilde{D}}$ by the relations
\begin{align}\label{Ddef}
{\bf \widetilde{D}}_{e_0}e_\mu=0,\qquad {\bf \widetilde{D}}_{e_i}e_0=K_i{}^je_j,\qquad {\bf \widetilde{D}}_{e_i}e_j=\Gamma_{ij}{}^be_b+K_{ij}e_0
\end{align} 
and denote the projection of ${\bf \widetilde{D}}$ onto the span of $e_1,e_2,e_3$ by $\widetilde{D}$. Let 
\begin{align}\label{curvtilde}
{}{{\bf \widetilde{R}}_{\alpha\beta\mu}{}^\nu e_\nu:=({\bf \widetilde{D}}_{e_\alpha}{\bf \widetilde{D}}_{e_\beta}-{\bf \widetilde{D}}_{e_\beta}{\bf \widetilde{D}}_{e_\alpha}-{\bf \widetilde{D}}_{[e_\alpha,e_\beta]})e_\mu=-{\bf \widetilde{R}}_{\beta\alpha\mu}{}^\nu e_\nu,}\qquad {\bf \widetilde{R}}_{\beta\mu}={\bf \widetilde{R}}_{\alpha \beta\mu}{}^\alpha,\qquad {\bf\widetilde{R}}={\bf\widetilde{R}}_\mu{}^\mu 
\end{align}
be the Riemann, Ricci, and scalar curvatures of ${\bf \widetilde{D}}$; the curvatures $\widetilde{R}_{aijb},\widetilde{R}_{ij},\widetilde{R}$ associated to $\widetilde{D}$ are defined similarly. For notational simplicity, we will also use in certain places below the convention $\Gamma_{\alpha\beta\nu}:={\bf g}(\widetilde{\bf D}_{e_\alpha}e_\beta,e_\nu){}{=-\Gamma_{\alpha\nu\beta}}$, {}{despite the fact that we have used $\Gamma$ so far to denote only spatial connection coefficients. In particular, with this convention $\Gamma_{i0j}=-\Gamma_{ij0}=K_{ij},\Gamma_{0\alpha\beta}=0$.}

Define the torsion of ${\bf \widetilde{D}}$:
\begin{align}\label{torsion}
C_{\alpha\mu\nu}={\bf g}([e_\alpha,e_\mu]-{\bf \widetilde{D}}_{e_\alpha}e_\mu +{\bf \widetilde{D}}_{e_\mu}e_\alpha,e_\nu)=-C_{\mu\alpha\nu}
\end{align}
Note that ${\bf \widetilde{D}}$ is not a priori torsion-free, however, it annihilates the metric ${\bf g}$. 
\begin{lemma}\label{lem:Dtilde}
The connection ${\bf \widetilde{D}}$ is compatible with ${\bf g}$, ${\bf \widetilde{D}}{\bf g}=0$, while its curvature and torsion tensors satisfy:
\begin{align}
\label{Cijb}C_{ijb}=&\,f^b{}_le_if_j{}^l-f^b{}_le_jf_i{}^l-\Gamma_{ijb}+\Gamma_{jib}=-C_{jib},\qquad C_{\alpha\beta0}=C_{0ij}=C_{i0j}=0,\\
\label{fBian}0=&\,{\bf \widetilde{R}}_{\alpha\beta\mu\nu}+{\bf \widetilde{R}}_{\beta\mu\alpha\nu}+{\bf \widetilde{R}}_{\mu\alpha\beta\nu}
+{\bf \widetilde{D}}_\mu C_{\alpha\beta\nu}+{\bf \widetilde{D}}_\alpha C_{\beta\mu\nu}+{\bf \widetilde{D}}_\beta C_{\mu\alpha\nu}\\
\notag&{}{+C_{\alpha\beta}{}^l C_{l\mu\nu}+C_{\mu\alpha}{}^l C_{l\beta\nu}+C_{\beta\mu}{}^l C_{l\alpha\nu}}\\
\label{antisRiem}{\bf \widetilde{R}}_{\alpha\beta\mu\nu}=&-{\bf\widetilde{R}}_{\alpha\beta\nu\mu},\qquad \widetilde{R}_{aijb}=-\widetilde{R}_{aibj}\\
\label{sBian}0=&\,{\bf \widetilde{D}}_\mu{\bf \widetilde{R}}_{\alpha\beta\gamma\delta}
+{\bf \widetilde{D}}_\alpha{\bf \widetilde{R}}_{\beta\mu\gamma\delta}
+{\bf \widetilde{D}}_\beta{\bf \widetilde{R}}_{\mu\alpha\gamma\delta}-C_{\mu\alpha l}{\bf \widetilde{R}}^l{}_{\beta\gamma\delta}-C_{\alpha\beta l}{\bf \widetilde{R}}^l{}_{\mu\gamma\delta}-C_{\beta\mu l}{\bf \widetilde{R}}^l{}_{\alpha\gamma\delta}
\end{align}

Moreover, the Gauss and Codazzi equations in Lemma \ref{lem:GaussCod} become:
\begin{align}
\label{Gausstilde}
{\bf \widetilde{R}}_{aijb}=&\,\widetilde{R}_{aijb}+K_{ab}K_{ij}-K_{aj}K_{ib},\\
\label{Codtilde}{\bf \widetilde{R}}_{jb0i}=&\,\widetilde{D}_jK_{bi}-\widetilde{D}_bK_{ji}-C_{jb}{}^lK_{li}.
\end{align}
\end{lemma}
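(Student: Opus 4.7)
The plan is to dispatch each claim by direct computation from the definition \eqref{Ddef}, relying on the symmetries established in Lemma \ref{lem:symm} and the commutator formula \eqref{comm}.

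For metric compatibility, I would check $e_\alpha({\bf g}(e_\beta,e_\gamma)) = {\bf g}(\widetilde{\bf D}_{e_\alpha}e_\beta,e_\gamma) + {\bf g}(e_\beta,\widetilde{\bf D}_{e_\alpha}e_\gamma)$ case by case. Since the frame is orthonormal, the LHS is always zero. For $\alpha = 0$ both terms on the right vanish; for spatial $\alpha$, the nonzero cases reduce either to $\Gamma_{ijb}+\Gamma_{ibj}=0$ (antisymmetry from Lemma~\ref{lem:symm}) or to $\Gamma_{ij0}+\Gamma_{i0j} = -K_{ij}+K_{ij}=0$ after reading off $\Gamma_{ij0} = {\bf g}(\widetilde{\bf D}_{e_i}e_j, e_0) = -K_{ij}$ and $\Gamma_{i0j} = {\bf g}(K_i{}^c e_c, e_j) = K_{ij}$. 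The torsion formula \eqref{Cijb} is then immediate: plug \eqref{Ddef} and \eqref{comm} into the definition \eqref{torsion}, giving ${\bf g}([e_i,e_j],e_b) = f^b{}_l e_i f_j{}^l - f^b{}_l e_j f_i{}^l$ and $-{\bf g}(\widetilde{\bf D}_{e_i}e_j,e_b)+{\bf g}(\widetilde{\bf D}_{e_j}e_i,e_b) = -\Gamma_{ijb}+\Gamma_{jib}$. The vanishing identities for $C_{\alpha\beta 0}$, $C_{0ij}$, $C_{i0j}$ come from (i) $[e_0,e_i] = -K_i{}^c e_c$ being spatial, (ii) $\widetilde{\bf D}_{e_0}e_\mu = 0$, and (iii) the symmetry $K_{ij} = K_{ji}$.

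The antisymmetry \eqref{antisRiem} is a standard corollary of metric compatibility: applying $[\widetilde{\bf D}_{e_\alpha},\widetilde{\bf D}_{e_\beta}] - \widetilde{\bf D}_{[e_\alpha,e_\beta]}$ to the constant function ${\bf g}(e_\mu,e_\nu)$ gives $0 = {\bf g}(\widetilde{\bf R}(e_\alpha,e_\beta)e_\mu,e_\nu) + {\bf g}(e_\mu,\widetilde{\bf R}(e_\alpha,e_\beta)e_\nu)$. The first and second Bianchi identities \eqref{fBian}, \eqref{sBian} are universal identities valid for any affine connection: \eqref{fBian} follows from the Jacobi identity for vector fields combined with the definition of $\widetilde{\bf R}$, after writing each $[e_\alpha,e_\beta]$ in terms of $\widetilde{\bf D}$ and $C$ via the torsion formula, which produces exactly the displayed derivative-of-torsion and torsion-squared corrections; \eqref{sBian} is the analogous cyclic identity obtained by applying $\widetilde{\bf D}_{e_\mu}\widetilde{\bf D}_{e_\alpha}\widetilde{\bf D}_{e_\beta}$ in cyclic triples to $e_\gamma$, again using Jacobi and the torsion expression for $[e_\alpha,e_\beta]$.

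For Gauss--Codazzi with torsion, I would rerun the computation of Lemma~\ref{lem:GaussCod} verbatim with ${\bf D}$ replaced by $\widetilde{\bf D}$. The Gauss identity \eqref{Gausstilde} does not pick up torsion corrections because the relevant commutator $[e_a,e_i]$ is spatial and the $K_{ij}e_0$ term in $\widetilde{\bf D}_{e_a}e_i$ interacts only with $\widetilde{\bf D}_{\cdot}e_0 = K\cdot e$, giving exactly the same $KK - KK$ contribution as before. For Codazzi \eqref{Codtilde}, the only change is that $[e_j,e_b] = (\Gamma_{jb}{}^c - \Gamma_{bj}{}^c + C_{jb}{}^c)e_c$ now carries an additional torsion piece, which, upon contracting with $\widetilde{\bf D}_{e_c}e_0 = K_c{}^l e_l$, produces the extra $-C_{jb}{}^l K_{li}$ term. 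Recognizing the remaining rearrangement as $\widetilde{D}_j K_{bi} - \widetilde{D}_b K_{ji}$ (using the formula for the covariant derivative of a $(0,2)$-tensor together with antisymmetry of $\Gamma_{\cdot\cdot\cdot}$ in the last two indices) closes the computation. The main bookkeeping hurdle is cleanly separating the torsion corrections from the ``Levi-Civita--looking'' part in the Bianchi identities, but this is pure algebra once one systematically replaces every occurrence of $[e_\alpha,e_\beta]$ by $\widetilde{\bf D}_{e_\alpha}e_\beta - \widetilde{\bf D}_{e_\beta}e_\alpha + C(e_\alpha,e_\beta)$.
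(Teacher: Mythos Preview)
Your proposal is correct and follows essentially the same approach as the paper: metric compatibility via the antisymmetry of $\Gamma_{ijb}$ from Lemma~\ref{lem:symm}, the torsion formula by direct insertion of \eqref{Ddef} and \eqref{comm} into \eqref{torsion}, the antisymmetry \eqref{antisRiem} as a corollary of compatibility, both Bianchi identities from Jacobi after replacing each $[e_\alpha,e_\beta]$ by $\widetilde{\bf D}_{e_\alpha}e_\beta - \widetilde{\bf D}_{e_\beta}e_\alpha + C(e_\alpha,e_\beta)$, and Gauss--Codazzi by rerunning Lemma~\ref{lem:GaussCod} with $\widetilde{\bf D}$ and the modified bracket. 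The paper writes out the Bianchi computations somewhat more explicitly line by line, but your outline captures exactly the same mechanism.
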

\begin{proof}
The compatibility of ${\bf \widetilde{D}}$ with ${\bf g}$ is equivalent to:
\begin{align*}
({\bf\widetilde{D}}_\alpha{\bf g})_{\mu\nu}=0\qquad\Longleftrightarrow\qquad {\bf g}({\bf \widetilde{D}}_{e_\alpha}e_\mu,e_\nu)+{\bf g}(e_\mu,{\bf \widetilde{D}}_{e_\alpha}e_\nu)=0
\end{align*}
Hence, it follows from the antisymmetry of $\Gamma_{ijb}=-\Gamma_{ibj}$, see Lemma \ref{lem:symm}, and the definition \eqref{Ddef}. Therefore, $\widetilde{D}$ is also compatible with $g$. By definition \eqref{curvtilde}, this also implies the antisymmetry of the curvatures ${\bf\widetilde{R}}_{\alpha\beta\mu\nu},\widetilde{R}_{aijb}$ with respect to the last two indices.

{}{By \eqref{comm}} and \eqref{Ddef} we derive the identities
\begin{align*}
[e_0,e_i]-{\bf \widetilde{D}}_{e_0}e_i+{\bf \widetilde{D}}_{e_i}e_0=&\,{}{-K_i{}^ce_c+K_i{}^ce_c=0},\\
[e_i,e_j]-{\bf \widetilde{D}}_{e_i}e_j+{\bf \widetilde{D}}_{e_j}e_i=&\,f^b{}_le_if_j{}^le_b-f^b{}_le_jf_i{}^le_b-(\Gamma_{ij}{}^b-\Gamma_{ji}{}^b)e_b+(K_{ji}-K_{ij})e_0,
\end{align*}
which yield \eqref{Cijb}, thanks to the symmetry of $K_{ij}$ (Lemma \ref{lem:symm}). 

Next, we derive the first Bianchi identity \eqref{fBian} using the definitions \eqref{curvtilde}, \eqref{torsion}:
\begin{align*}
&{\bf \widetilde{R}}_{\alpha\beta\mu\nu}+{\bf \widetilde{R}}_{\beta\mu\alpha\nu}+{\bf \widetilde{R}}_{\mu\alpha\beta\nu}\\
=&\,{\bf g}(({\bf \widetilde{D}}_{e_\alpha}{\bf \widetilde{D}}_{e_\beta}-{\bf \widetilde{D}}_{e_\beta}{\bf \widetilde{D}}_{e_\alpha}-{\bf \widetilde{D}}_{[e_\alpha,e_\beta]})e_\mu,e_\nu)
+{\bf g}(({\bf \widetilde{D}}_{e_\beta}{\bf \widetilde{D}}_{e_\mu}-{\bf \widetilde{D}}_{e_\mu}{\bf \widetilde{D}}_{e_\beta}-{\bf \widetilde{D}}_{[e_\beta,e_\mu]})e_\alpha,e_\nu)\\
&+{\bf g}(({\bf \widetilde{D}}_{e_\mu}{\bf \widetilde{D}}_{e_\alpha}-{\bf \widetilde{D}}_{e_\alpha}{\bf \widetilde{D}}_{e_\mu}-{\bf \widetilde{D}}_{[e_\mu,e_\alpha]})e_\beta,e_\nu)\\
=&\,{\bf g}({\bf \widetilde{D}}_{e_\alpha}([e_\beta,e_\mu]-C_{\beta\mu}{}^{l} e_{l}),e_\nu)+{\bf g}({\bf \widetilde{D}}_{e_\beta}([e_\mu,e_\alpha]-C_{\mu\alpha}{}^{l} e_{l}),e_\nu)+{\bf g}({\bf \widetilde{D}}_{e_\mu}([e_\alpha,e_\beta]-C_{\alpha\beta}{}^{l} e_{l}),e_\nu)\\
&-{\bf g}({\bf \widetilde{D}}_{[e_\alpha,e_\beta]}e_\mu,e_\nu)-{\bf g}({\bf \widetilde{D}}_{[e_\beta,e_\mu]}e_\alpha,e_\nu)-{\bf g}({\bf \widetilde{D}}_{[e_\mu,e_\alpha]}e_\beta,e_\nu)\\
=&\,{\bf g}({\bf \widetilde{D}}_{e_\alpha}([e_\beta,e_\mu])-{\bf \widetilde{D}}_{[e_\beta,e_\mu]}e_\alpha,e_\nu) + \,{\bf g}({\bf \widetilde{D}}_{e_\beta}([e_\mu,e_\alpha])-{\bf \widetilde{D}}_{[e_\mu,e_\alpha]}e_\beta,e_\nu) + \,{\bf g}({\bf \widetilde{D}}_{e_\mu}([e_\alpha,e_\beta])-{\bf \widetilde{D}}_{[e_\alpha,e_\beta]}e_\mu,e_\nu) \\
&\,- {\bf g}({\bf \widetilde{D}}_{e_\alpha}(C_{\beta\mu}{}^{l} e_{l}),e_\nu)  - {\bf g}({\bf \widetilde{D}}_{e_\beta}(C_{\mu \alpha}{}^{l} e_{l}),e_\nu)- {\bf g}({\bf \widetilde{D}}_{e_\mu}(C_{\alpha\beta}{}^{l} e_{l}),e_\nu)\\
=&\,[e_\mu,[e_\alpha,e_\beta]]+[e_\alpha,[e_\beta,e_\mu]]+[e_\beta,[e_\mu,e_\alpha]]-C_{\mu{l}\nu}{\bf g}(e^{l},[e_\alpha,e_\beta])-C_{\alpha{l}\nu}{\bf g}(e^{l},[e_\beta,e_\mu])\\
&-C_{\beta{l}\nu}{\bf g}(e^{l},[e_\mu,e_\alpha])-e_\alpha C_{\beta\mu\nu}-C_{\beta\mu}{}^{l}\Gamma_{\alpha{l}\nu}-e_\beta C_{\mu\alpha\nu}-C_{\mu\alpha}{}^{l}\Gamma_{\beta{l}\nu}-e_\mu C_{\alpha\beta\nu}-C_{\alpha\beta}{}^{l}\Gamma_{\mu{l}\nu}\\
\tag{Jacobi's identity}=&{}{-C_{\mu{l}\nu}C_{\alpha\beta}{}^l}-C_{\mu{l}\nu}(\Gamma_{\alpha\beta}{}^l-\Gamma_{\beta\alpha}{}^l){}{-C_{\alpha{l}\nu}C_{\beta\mu}{}^l}-C_{\alpha{l}\nu}(\Gamma_{\beta\mu}{}^l-\Gamma_{\mu\beta}{}^l){}{-C_{\beta{l}\nu}C_{\mu\alpha}{}^l}\\
&-C_{\beta{l}\nu}(\Gamma_{\mu\alpha}{}^l-\Gamma_{\alpha\mu}{}^l)
-e_\alpha C_{\beta\mu\nu}-C_{\beta\mu}{}^{l}\Gamma_{\alpha{l}\nu}-e_\beta C_{\mu\alpha\nu}-C_{\mu\alpha}{}^{l}\Gamma_{\beta{l}\nu}-e_\mu C_{\alpha\beta\nu}-C_{\alpha\beta}{}^{l}\Gamma_{\mu{l}\nu}.
\end{align*}
The last RHS can be seen to correspond to the torsion terms in \eqref{fBian} by using the antisymmetries of $\Gamma_{\alpha\beta\nu},C_{\alpha\beta\nu}$ in the last two and first two indices respectively.

On the other hand, we have 
\begin{align*}
&{\bf \widetilde{D}}_\mu{\bf \widetilde{R}}(e_\alpha,e_\beta)
+{\bf \widetilde{D}}_\alpha{\bf \widetilde{R}}(e_\beta,e_\mu)
+{\bf \widetilde{D}}_\beta{\bf \widetilde{R}}(e_\mu,e_\alpha)\\
=&\,{\bf\widetilde{D}}_{e_\mu}({\bf \widetilde{D}}_{e_\alpha}{\bf \widetilde{D}}_{e_\beta}-{\bf \widetilde{D}}_{e_\beta}{\bf \widetilde{D}}_{e_\alpha}-{\bf \widetilde{D}}_{[e_\alpha,e_\beta]})
-({\bf\widetilde{D}}_{e_\mu}e_\alpha)^\nu({\bf \widetilde{D}}_{e_\nu}{\bf \widetilde{D}}_{e_\beta}-{\bf \widetilde{D}}_{e_\beta}{\bf \widetilde{D}}_{e_\nu}-{\bf \widetilde{D}}_{[e_\nu,e_\beta]})\\
&-({\bf\widetilde{D}}_{e_\mu}e_\beta)^\nu({\bf \widetilde{D}}_{e_\alpha}{\bf \widetilde{D}}_{e_\nu}-{\bf \widetilde{D}}_{e_\nu}{\bf \widetilde{D}}_{e_\alpha}-{\bf \widetilde{D}}_{[e_\alpha,e_\nu]})\\
&+{\bf\widetilde{D}}_{e_\alpha}({\bf \widetilde{D}}_{e_\beta}{\bf \widetilde{D}}_{e_\mu}-{\bf \widetilde{D}}_{e_\mu}{\bf \widetilde{D}}_{e_\beta}-{\bf \widetilde{D}}_{[e_\beta,e_\mu]})
-({\bf\widetilde{D}}_{e_\alpha}e_\beta)^\nu({\bf \widetilde{D}}_{e_\nu}{\bf \widetilde{D}}_{e_\mu}-{\bf \widetilde{D}}_{e_\mu}{\bf \widetilde{D}}_{e_\nu}-{\bf \widetilde{D}}_{[e_\nu,e_\mu]})\\
&-({\bf\widetilde{D}}_{e_\alpha}e_\mu)^\nu({\bf \widetilde{D}}_{e_\beta}{\bf \widetilde{D}}_{e_\nu}-{\bf \widetilde{D}}_{e_\nu}{\bf \widetilde{D}}_{e_\beta}-{\bf \widetilde{D}}_{[e_\beta,e_\nu]})\\
&+{\bf\widetilde{D}}_{e_\beta}({\bf \widetilde{D}}_{e_\mu}{\bf \widetilde{D}}_{e_\alpha}-{\bf \widetilde{D}}_{e_\alpha}{\bf \widetilde{D}}_{e_\mu}-{\bf \widetilde{D}}_{[e_\mu,e_\alpha]})
-({\bf\widetilde{D}}_{e_\beta}e_\mu)^\nu({\bf \widetilde{D}}_{e_\nu}{\bf \widetilde{D}}_{e_\alpha}-{\bf \widetilde{D}}_{e_\alpha}{\bf \widetilde{D}}_{e_\nu}-{\bf \widetilde{D}}_{[e_\nu,e_\alpha]})\\
&-({\bf\widetilde{D}}_{e_\beta}e_\alpha)^\nu({\bf \widetilde{D}}_{e_\mu}{\bf \widetilde{D}}_{e_\nu}-{\bf \widetilde{D}}_{e_\nu}{\bf \widetilde{D}}_{e_\mu}-{\bf \widetilde{D}}_{[e_\mu,e_\nu]})\\
=&\,[{\bf\widetilde{D}}_{e_\mu},{\bf\widetilde{D}}_{e_\alpha}]{\bf\widetilde{D}}_{e_\beta}+[{\bf\widetilde{D}}_{e_\beta},{\bf\widetilde{D}}_{e_\mu}]{\bf\widetilde{D}}_{e_\alpha}+[{\bf\widetilde{D}}_{e_\alpha},{\bf\widetilde{D}}_{e_\beta}]{\bf\widetilde{D}}_{e_\mu}-({\bf\widetilde{D}}_{e_\mu}e_\alpha-{\bf\widetilde{D}}_{e_\alpha}e_\mu)^\nu{\bf\widetilde{R}}(e_\nu,e_\beta)\\
&-({\bf\widetilde{D}}_{e_\alpha}e_\beta-{\bf\widetilde{D}}_{e_\beta}e_\alpha)^\nu{\bf\widetilde{R}}(e_\nu,e_\mu)
-({\bf\widetilde{D}}_{e_\beta}e_\mu-{\bf\widetilde{D}}_{e_\mu}e_\alpha)^\nu{\bf\widetilde{R}}(e_\nu,e_\alpha)\\
&-[{\bf\widetilde{D}}_{e_\mu},{\bf\widetilde{D}}_{[e_\alpha,e_\beta]}]
-[{\bf\widetilde{D}}_{e_\alpha},{\bf\widetilde{D}}_{[e_\beta,e_\mu]}]
-[{\bf\widetilde{D}}_{e_\beta},{\bf\widetilde{D}}_{[e_\mu,e_\alpha]}]\\
&-{\bf\widetilde{D}}_{[e_\alpha,e_\beta]}{\bf\widetilde{D}}_{e_\mu}
-{\bf\widetilde{D}}_{[e_\beta,e_\mu]}{\bf\widetilde{D}}_{e_\alpha}
-{\bf\widetilde{D}}_{[e_\mu,e_\alpha]}{\bf\widetilde{D}}_{e_\beta}\\
\tag{adding zero by Jacobi's identity}&+{\bf\widetilde{D}}_{[e_\mu,[e_\alpha,e_\beta]]}+{\bf\widetilde{D}}_{[e_\alpha,[e_\beta,e_\mu]]}+{\bf\widetilde{D}}_{[e_\beta,[e_\mu,e_\alpha]]}\\
=&\,{\bf\widetilde{R}}(e_\mu,e_\alpha)\widetilde{\bf D}_{e_\beta}
+{\bf\widetilde{R}}(e_\alpha,e_\beta)\widetilde{\bf D}_{e_\mu}
+{\bf\widetilde{R}}(e_\beta,e_\mu)\widetilde{\bf D}_{e_\alpha}
+([e_\alpha,e_\beta]-{\bf\widetilde{D}}_{e_\alpha}e_\beta+{\bf\widetilde{D}}_{e_\beta}e_\alpha)^\nu{\bf\widetilde{R}}(e_\nu,e_\mu)\\
&+([e_\beta,e_\mu]-{\bf\widetilde{D}}_{e_\beta}e_\mu+{\bf\widetilde{D}}_{e_\mu}e_\beta)^\nu{\bf\widetilde{R}}(e_\nu,e_\alpha)
+([e_\mu,e_\alpha]-{\bf\widetilde{D}}_{e_\mu}e_\alpha+{\bf\widetilde{D}}_{e_\alpha}e_\mu)^\nu{\bf\widetilde{R}}(e_\nu,e_\beta)
\end{align*}
The second Bianchi identity follows by applying the preceding expression to $e_\gamma$, taking the inner product with $e_\delta$ and utilising the idenitty $[e_\alpha,e_\beta]=C_{\alpha\beta}{}^le_l+{\bf\widetilde{D}}_{e_\alpha}e_\beta-{\bf\widetilde{D}}_{e_\beta}e_\alpha$.

Finally, for the Gauss and Codazzi equations \eqref{Gausstilde}-\eqref{Codtilde}, we repeat the steps in the proof of Lemma \ref{lem:GaussCod}, making use of the formula $[e_j,e_b]=C_{jb}{}^le_l+{\bf\widetilde{D}}_{e_j}e_b-{\bf\widetilde{D}}_{e_b}e_j$, without identifying ${\bf \widetilde{R}}_{0ijb},{\bf \widetilde{R}}_{jb0i}$ (which uses the torsion free property of the connection). This completes the proof of the lemma.
\end{proof}
\subsection{Modified curvature and propagation equations for vanishing quantities}

{}{An essential step in proving the vanishing of $C_{ijb},\widetilde{\bf R}_{\beta\mu}$ is the derivation of propagation equations for them, using the reduced equations \eqref{e0Kijnew}-\eqref{e0Gammaijbnew} and the Bianchi identities for the curvature of $\widetilde{\bf D}$ in Lemma \ref{lem:Dtilde}. If these equations are suitable for an energy argument, then we can infer the vanishing of the relevant variables from their vanishing on the initial hypersurface.} 

However, for the particular curvature of $\widetilde{\bf D}$ we have defined,  this system would fail to be hyperbolic, hence, obstructing us from deriving energy estimates. {}{Indeed, this can be seen by examining the system of evolution equations in Lemma \ref{lem:systRic}, which we derive below for the modified curvature \eqref{curvhat}. The first order system \eqref{e0Cijb}-\eqref{RijAeq} is in fact symmetric hyperbolic, but if we were to replace $\widehat{\bf R}$ by $\widetilde{\bf R}$ this would fail to be the case, due to the additional first order $C_{ijb}$ terms with no particular structure.}

For this purpose, we consider the modified curvature:
\begin{align}\label{curvhat}
{\bf \widehat{R}}_{\alpha\beta\mu}{}^\nu e_\nu:=({\bf \widetilde{D}}_{e_\alpha}{\bf \widetilde{D}}_{e_\beta}-{\bf \widetilde{D}}_{e_\beta}{\bf \widetilde{D}}_{e_\alpha}-{\bf \widetilde{D}}_{\widetilde{\bf D}_{e_\alpha}e_\beta-\widetilde{\bf D}_{e_\beta}e_\alpha})e_\mu=(\widetilde{\bf R}_{\alpha\beta\mu}{}^\nu+C_{\alpha\beta}{}^\lambda\Gamma_{\lambda\mu}{}^\nu) e_\nu
\end{align}
Note that ${\bf \widehat{R}}_{\alpha\beta\mu\nu}$ is not tensorial with respect to its third index $\mu$. We also define ${\bf \widehat{R}}_{\beta\mu}={\bf \widehat{R}}_{\alpha \beta\mu}{}^\alpha,\widehat{\bf R}={\bf\widehat{R}}_\mu{}^\mu$ and similarly for the modified curvatures $\widehat{R}_{aijb},\widehat{R}_{ij},\widehat{R}$ of $\widetilde{D}$. Then we have the following identities, which are immediate consequences of Lemma \ref{lem:Dtilde} and \eqref{curvhat}:
\begin{lemma}\label{lem:Dhat}
The curvatures ${\bf \widehat{R}}_{\alpha\beta\mu\nu},\widehat{R}_{aijb}$ satisfy the identites:
\begin{align}
\label{hatantisRiem}
{\bf \widehat{R}}_{\alpha\beta\mu\nu}=&-{\bf \widehat{R}}_{\beta\alpha\mu\nu}=-{\bf\widehat{R}}_{\alpha\beta\nu\mu},\qquad \widehat{R}_{aijb}=-\widehat{R}_{iajb}=-\widehat{R}_{aibj}\\
\label{hatfBian}0=&\,{\bf \widehat{R}}_{\alpha\beta\mu\nu}+{\bf \widehat{R}}_{\beta\mu\alpha\nu}+{\bf \widehat{R}}_{\mu\alpha\beta\nu}
+{\bf \widetilde{D}}_\mu C_{\alpha\beta\nu}+{\bf \widetilde{D}}_\alpha C_{\beta\mu\nu}+{\bf \widetilde{D}}_\beta C_{\mu\alpha\nu}\\
\notag&{}{+C_{\alpha\beta}{}^l C_{l\mu\nu}+C_{\mu\alpha}{}^l C_{l\beta\nu}+C_{\beta\mu}{}^l C_{l\alpha\nu}-C_{\alpha\beta}{}^\lambda\Gamma_{\lambda\mu\nu}-C_{\beta\mu}{}^\lambda\Gamma_{\lambda\alpha\nu}-C_{\mu\alpha}{}^\lambda\Gamma_{\lambda\beta\nu}}\\
\label{hataRic}{\bf\widehat{R}}_{\alpha\beta}-{\bf\widehat{R}}_{\beta\alpha}
=&-{\bf \widetilde{D}}^\mu C_{\alpha\beta\mu}-{\bf \widetilde{D}}_\alpha C_{\beta\mu}{}^\mu-{\bf \widetilde{D}}_\beta C_{\mu\alpha}{}^\mu\\
\notag&{}{-C_{\alpha\beta}{}^l C_{l\mu}{}^\mu-C_{\mu\alpha}{}^l C_{l\beta}{}^\mu-C_{\beta\mu}{}^l C_{l\alpha}{}^\mu+C_{\beta\mu}{}^\lambda\Gamma_{\lambda\alpha}{}^\mu+C_{\mu\alpha}{}^\lambda\Gamma_{\lambda\beta}{}^\mu}\\
\label{hatsBian}0=&\,{\bf \widetilde{D}}_\mu{\bf \widehat{R}}_{\alpha\beta\gamma\delta}
+{\bf \widetilde{D}}_\alpha{\bf \widehat{R}}_{\beta\mu\gamma\delta}
+{\bf \widetilde{D}}_\beta{\bf \widehat{R}}_{\mu\alpha\gamma\delta}
-C_{\mu\alpha l}({\bf \widehat{R}}^l{}_{\beta\gamma\delta}-C^l{}_{\beta\nu}\Gamma^\nu{}_{\gamma\delta})\\
&\notag-C_{\alpha\beta l}({\bf \widehat{R}}^l{}_{\mu\gamma\delta}-C^l{}_{\mu\nu}\Gamma^\nu{}_{\gamma\delta})-C_{\beta\mu l}({\bf \widehat{R}}^l{}_{\alpha\gamma\delta}-C^l{}_{\alpha\nu}\Gamma^\nu{}_{\gamma\delta})\\
\notag&+\bigg[{\bf \widehat{R}}_{\alpha\beta\mu\nu}+{\bf \widehat{R}}_{\beta\mu\alpha\nu}+{\bf \widehat{R}}_{\mu\alpha\beta\nu}{}{+
C_{\alpha\beta}{}^l C_{l\mu\nu}+C_{\mu\alpha}{}^l C_{l\beta\nu}+C_{\beta\mu}{}^l C_{l\alpha\nu}}\\
&{}{-C_{\alpha\beta}{}^\lambda\Gamma_{\lambda\mu\nu}-C_{\beta\mu}{}^\lambda\Gamma_{\lambda\alpha\nu}-C_{\mu\alpha}{}^\lambda\Gamma_{\lambda\beta\nu}}
\bigg]\Gamma_{\nu\gamma\delta}\\
\notag&-C_{\alpha\beta}{}^\nu{\bf \widetilde{D}}_\mu\Gamma_{\nu\gamma\delta}
-C_{\beta\mu}{}^\nu{\bf \widetilde{D}}_\alpha\Gamma_{\nu\gamma\delta}
-C_{\mu\alpha}{}^\nu{\bf \widetilde{D}}_\beta\Gamma_{\nu\gamma\delta}\\
\label{Gausshat}
{\bf \widehat{R}}_{aijb}=&\,\widehat{R}_{aijb}+K_{ab}K_{ij}-K_{aj}K_{ib},\\
\label{Codhat}{\bf \widehat{R}}_{jb0i}=&\,\widetilde{D}_jK_{bi}-\widetilde{D}_bK_{ji},\\
\label{hatRb0}{\bf \widehat{R}}_{b0}=&\,\widetilde{D}^iK_{bi}-\widetilde{D}_b\mathrm{tr}K\\
\label{hatRaijb}\widehat{R}_{aijb}=&\,e_a\Gamma_{ijb}-e_i\Gamma_{ajb}-\Gamma_{ab}{}^c\Gamma_{ijc}+\Gamma_{ib}{}^c\Gamma_{ajc}-\Gamma_{ai}{}^c\Gamma_{cjb}+\Gamma_{ia}{}^c\Gamma_{cjb}
\end{align}
where everything is interpreted tensorially, e.g., ${\bf \widetilde{D}}_\mu\Gamma_{\nu\gamma\delta}:=e_\mu\Gamma_{\nu\gamma\delta}-\Gamma_{\mu\nu}{}^\lambda\Gamma_{\lambda\gamma\delta}-\Gamma_{\mu\gamma}{}^\lambda\Gamma_{\nu\lambda\delta}-\Gamma_{\mu\delta}{}^\lambda\Gamma_{\nu\gamma\lambda}$.
\end{lemma}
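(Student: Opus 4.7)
The plan is to derive every identity from a single algebraic relation between $\widehat{\bf R}$ and $\widetilde{\bf R}$, combined with the identities already established in Lemma \ref{lem:Dtilde}. Unfolding the definition \eqref{curvhat} using the torsion identity $[e_\alpha,e_\beta]=C_{\alpha\beta}{}^\lambda e_\lambda+\widetilde{\bf D}_{e_\alpha}e_\beta-\widetilde{\bf D}_{e_\beta}e_\alpha$ from \eqref{Cijb}, I read off the key relation $\widehat{\bf R}_{\alpha\beta\mu\nu}=\widetilde{\bf R}_{\alpha\beta\mu\nu}+C_{\alpha\beta}{}^\lambda\Gamma_{\lambda\mu\nu}$, with the spatial analogue $\widehat{R}_{aijb}=\widetilde{R}_{aijb}+C_{ai}{}^l\Gamma_{ljb}$. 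The antisymmetries \eqref{hatantisRiem} then follow at once: the first-pair antisymmetry is manifest from the skew commutator in \eqref{curvhat}, while the last-pair antisymmetry comes from \eqref{antisRiem} together with $\Gamma_{\lambda\mu\nu}=-\Gamma_{\lambda\nu\mu}$, which is the infinitesimal form of the metric compatibility of $\widetilde{\bf D}$ established in Lemma \ref{lem:Dtilde}.

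For the algebraic Bianchi \eqref{hatfBian}, I substitute $\widetilde{\bf R}=\widehat{\bf R}-C\Gamma$ into \eqref{fBian}; the three cyclic $-C_{\alpha\beta}{}^\lambda\Gamma_{\lambda\mu\nu}$ corrections match precisely the extra $\Gamma$-terms appearing on the last line. Contracting \eqref{hatfBian} on $(\alpha,\nu)$ and using \eqref{hatantisRiem} yields \eqref{hataRic}. For the Gauss equation \eqref{Gausshat}, the key observation is that the spacetime and spatial modifications carry the same correction: since $C_{\alpha\beta 0}=0$ and $\Gamma_{0jb}=0$ by \eqref{Ddef}, only spatial $\lambda$ contributes to $C_{ai}{}^\lambda\Gamma_{\lambda jb}$, so the correction terms cancel when forming $\widehat{\bf R}_{aijb}-\widehat{R}_{aijb}=\widetilde{\bf R}_{aijb}-\widetilde{R}_{aijb}$, and \eqref{Gausshat} follows from \eqref{Gausstilde}. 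For Codazzi \eqref{Codhat}, I use $\Gamma_{l0i}=K_{li}$, so the modification contributes $C_{jb}{}^lK_{li}$, which exactly cancels the residual torsion term in \eqref{Codtilde}. Tracing \eqref{Codhat} on a spatial index, and using the last-pair antisymmetry to kill the $\alpha=0$ contribution, yields \eqref{hatRb0}. Finally, \eqref{hatRaijb} is obtained by unpacking the spatial analogue of \eqref{curvhat} directly with $\widetilde D_{e_a}e_j=\Gamma_{aj}{}^ce_c$: the torsion correction in the definition replaces $\widetilde D_{[e_a,e_i]}e_j$ with $\widetilde D_{\widetilde D_{e_a}e_i-\widetilde D_{e_i}e_a}e_j$, so the formal expression coincides with the Levi-Civita formula \eqref{Raijb}.

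The main obstacle is the differential Bianchi \eqref{hatsBian}. I start from \eqref{sBian}, substitute $\widetilde{\bf R}=\widehat{\bf R}-C\Gamma$, and Leibniz-expand $\widetilde{\bf D}_\mu(C_{\alpha\beta}{}^\lambda\Gamma_{\lambda\gamma\delta})=(\widetilde{\bf D}_\mu C_{\alpha\beta}{}^\lambda)\Gamma_{\lambda\gamma\delta}+C_{\alpha\beta}{}^\lambda\widetilde{\bf D}_\mu\Gamma_{\lambda\gamma\delta}$. The $C\widetilde{\bf D}\Gamma$ pieces line up directly with the final three terms of \eqref{hatsBian}, while the cyclic sum $(\widetilde{\bf D}_\mu C_{\alpha\beta\nu}+\widetilde{\bf D}_\alpha C_{\beta\mu\nu}+\widetilde{\bf D}_\beta C_{\mu\alpha\nu})\Gamma^\nu{}_{\gamma\delta}$ is reorganized through \eqref{hatfBian} into the bracketed expression appearing in \eqref{hatsBian}. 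The torsion terms $-C_{\mu\alpha l}\widetilde{\bf R}^l{}_{\beta\gamma\delta}$ and their cyclic permutations coming from \eqref{sBian} are rewritten using the raised-index algebraic relation $\widetilde{\bf R}^l{}_{\beta\gamma\delta}=\widehat{\bf R}^l{}_{\beta\gamma\delta}-C^l{}_{\beta\nu}\Gamma^\nu{}_{\gamma\delta}$. The computation is long but purely mechanical; the only nontrivial input beyond careful index bookkeeping and Leibniz is the use of \eqref{hatfBian} to convert $(\widetilde{\bf D}C)\Gamma$ cyclic sums into the curvature-plus-torsion bracket that appears in \eqref{hatsBian}.
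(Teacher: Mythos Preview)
Your proof is correct and follows essentially the same route as the paper: both arguments hinge on the single algebraic relation $\widehat{\bf R}_{\alpha\beta\mu\nu}=\widetilde{\bf R}_{\alpha\beta\mu\nu}+C_{\alpha\beta}{}^\lambda\Gamma_{\lambda\mu\nu}$, transfer each identity from Lemma~\ref{lem:Dtilde} across this relation, and for \eqref{hatsBian} Leibniz-expand $\widetilde{\bf D}(C\Gamma)$ and then invoke \eqref{hatfBian} to convert the cyclic $\widetilde{\bf D}C$ sum into the bracketed curvature-plus-torsion expression. The only cosmetic difference is that the paper contracts \eqref{hatfBian} on $(\mu,\nu)$ rather than your $(\alpha,\nu)$ to obtain \eqref{hataRic}, which is equivalent by the cyclic symmetry of the first three slots.
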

\begin{proof}
The antisymmetries \eqref{hatantisRiem} follow from the definition \eqref{curvhat}, the antisymmetries \eqref{curvtilde}, \eqref{antisRiem} of ${\bf\widetilde{R}}_{\alpha\beta\mu\nu}$ and that of $C_{\alpha\beta\mu}$ in $(\alpha;\beta)$. Also, plugging \eqref{curvhat} into \eqref{fBian} gives \eqref{hatfBian}, while contracting \eqref{hatfBian} with respect to $(\mu;\nu)$ gives \eqref{hataRic}. Moreover, \eqref{Gausshat}-\eqref{Codhat} follow from \eqref{Gausstilde}-\eqref{Codtilde} by plugging in the definition \eqref{curvhat} and recalling that $C_{ai0}=0$, see \eqref{Cijb}. Contracting \eqref{Codhat} also gives \eqref{hatRb0}. The computation of the curvature formula \eqref{hatRaijb} is straightforward, using the definition of $\widehat{R}_{aijb}$, analogous to \eqref{curvhat}, cf. the proof of Lemma \ref{lem:GaussCod}.
]

For the less obvious Bianchi-type identity \eqref{hatsBian}, we plug \eqref{curvhat} into \eqref{sBian} and treat all the terms tensorially. Although $\widehat{\bf R}_{\alpha\beta\gamma\delta}$ is not a tensor in $\gamma$, its difference from $C_{\alpha\beta}{}^\nu\Gamma_{\nu\gamma\delta}$ is. Therefore, we deduce
\begin{align*}
0=&\,{\bf \widetilde{D}}_\mu({\bf \widehat{R}}_{\alpha\beta\gamma\delta}-C_{\alpha\beta}{}^\nu\Gamma_{\nu\gamma\delta})
+{\bf \widetilde{D}}_\alpha({\bf \widehat{R}}_{\beta\mu\gamma\delta}-C_{\beta\mu}{}^\nu\Gamma_{\nu\gamma\delta})
+{\bf \widetilde{D}}_\beta({\bf \widehat{R}}_{\mu\alpha\gamma\delta}-C_{\mu\alpha}{}^\nu\Gamma_{\nu\gamma\delta})\\
&-C_{\mu\alpha l}({\bf \widehat{R}}^l{}_{\beta\gamma\delta}-C^l{}_{\beta\nu}\Gamma^\nu{}_{\gamma\delta})-C_{\alpha\beta l}({\bf \widehat{R}}^l{}_{\mu\gamma\delta}-C^l{}_{\mu\nu}\Gamma^\nu{}_{\gamma\delta})-C_{\beta\mu l}({\bf \widehat{R}}^l{}_{\alpha\gamma\delta}-C^l{}_{\alpha\nu}\Gamma^\nu{}_{\gamma\delta})\\
=&\,{\bf \widetilde{D}}_\mu{\bf \widehat{R}}_{\alpha\beta\gamma\delta}
+{\bf \widetilde{D}}_\alpha{\bf \widehat{R}}_{\beta\mu\gamma\delta}
+{\bf \widetilde{D}}_\beta{\bf \widehat{R}}_{\mu\alpha\gamma\delta}\\
&-C_{\mu\alpha l}({\bf \widehat{R}}^l{}_{\beta\gamma\delta}-C^l{}_{\beta\nu}\Gamma^\nu{}_{\gamma\delta})-C_{\alpha\beta l}({\bf \widehat{R}}^l{}_{\mu\gamma\delta}-C^l{}_{\mu\nu}\Gamma^\nu{}_{\gamma\delta})-C_{\beta\mu l}({\bf \widehat{R}}^l{}_{\alpha\gamma\delta}-C^l{}_{\alpha\nu}\Gamma^\nu{}_{\gamma\delta})\\
&{}{-({\bf \widetilde{D}}_\mu C_{\alpha\beta}{}^\nu+
{\bf \widetilde{D}}_\alpha C_{\beta\mu}{}^\nu
+{\bf \widetilde{D}}_\beta C_{\mu\alpha}{}^\nu)\Gamma_{\nu\gamma\delta}}
{}{-C_{\alpha\beta}{}^\nu {\bf \widetilde{D}}_\mu\Gamma_{\nu\gamma\delta}
-C_{\beta\mu}{}^\nu{\bf \widetilde{D}}_\alpha \Gamma_{\nu\gamma\delta}
-C_{\mu\alpha}{}^\nu{\bf \widetilde{D}}_\beta\Gamma_{\nu\gamma\delta}}
\end{align*}
On the other hand, we employ the first Bianchi identity \eqref{hatfBian} to write
\begin{align*}
&-{\bf \widetilde{D}}_\mu C_{\alpha\beta}{}^\nu
-{\bf \widetilde{D}}_\alpha C_{\beta\mu}{}^\nu
-{\bf \widetilde{D}}_\beta C_{\mu\alpha}{}^\nu\\
=&\,{\bf \widehat{R}}_{\alpha\beta\mu\nu}+{\bf \widehat{R}}_{\beta\mu\alpha\nu}+{\bf \widehat{R}}_{\mu\alpha\beta\nu}
{}{+C_{\alpha\beta}{}^l C_{l\mu\nu}+C_{\mu\alpha}{}^l C_{l\beta\nu}+C_{\beta\mu}{}^l C_{l\alpha\nu}}\\
&{}{-C_{\alpha\beta}{}^\lambda\Gamma_{\lambda\mu\nu}-C_{\beta\mu}{}^\lambda\Gamma_{\lambda\alpha\nu}-C_{\mu\alpha}{}^\lambda\Gamma_{\lambda\beta\nu}}
\end{align*}
which completes the proof of the lemma.
\end{proof}
\begin{remark}
It is important that \eqref{hatsBian} does not contain any spatial derivatives of $C_{ijb}$, which could lead to a non-symmetric system for the vanishing variables, cf. Lemma \ref{lem:systRic}. We were able to replace such terms by using the first Bianchi identity \eqref{hatfBian}. In turn, we must express the cyclic curvature sum in \eqref{hatsBian} solely by Ricci terms. 
\end{remark}
\begin{lemma}\label{lem:cyclRiem}
The cyclic sum ${\bf \widehat{R}}_{(\alpha\beta\mu)\nu}:={\bf \widehat{R}}_{\alpha\beta\mu\nu}+{\bf \widehat{R}}_{\beta\mu\alpha\nu}+{\bf \widehat{R}}_{\mu\alpha\beta\nu}$ satisfies:
\begin{align}
\label{cyclRiem}
\begin{split}
{\bf \widehat{R}}_{(abi)j}=({\bf\widehat{R}}_{ia}-{\bf\widehat{R}}_{ai}){\bf g}_{bj}+ ({\bf\widehat{R}}_{bi}-{\bf\widehat{R}}_{ib}){\bf g}_{aj}+({\bf\widehat{R}}_{ab}-{\bf\widehat{R}}_{ba}){\bf g}_{ij},\\
{\bf \widehat{R}}_{(0bi)0}={\bf \widehat{R}}_{(abi)0}=0,\qquad{\bf \widehat{R}}_{(0bi)j}=-\delta_{ij}{\bf\widehat{R}}_{b0}+\delta_{bj}{\bf\widehat{R}}_{i0}
\end{split}
\end{align}
\end{lemma}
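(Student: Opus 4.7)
My plan rests on a general algebraic fact: for any $(0,4)$-tensor $T_{\alpha\beta\mu\nu}$ with the two antisymmetries \eqref{hatantisRiem}, the cyclic sum $T_{(\alpha\beta\mu)\nu}:=T_{\alpha\beta\mu\nu}+T_{\beta\mu\alpha\nu}+T_{\mu\alpha\beta\nu}$ is automatically totally antisymmetric in the first three indices. Cyclic symmetry in $(\alpha,\beta,\mu)$ is built in, and swapping $\alpha\leftrightarrow\beta$ negates the sum after a short rearrangement using the first-pair antisymmetry of $T$; together these force total antisymmetry. For the first identity of \eqref{cyclRiem}, with all four indices spatial, the tensor ${\bf\widehat{R}}_{(abi)j}$ therefore lies in the three-dimensional subspace of tensors on the spatial slice totally antisymmetric in $(a,b,i)$, and it is determined by its trace $\delta^{bj}(\cdot)$, valued in the three-dimensional space of two-tensors antisymmetric in $(a,i)$ (the trace map is, up to sign, the Hodge $\ast$ in three spatial dimensions, hence an isomorphism). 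Computing the general trace $g^{\beta\nu}T_{(\alpha\beta\mu)\nu}$, two of the three cyclic terms give Ricci contractions with opposite signs while the third vanishes by the last-pair antisymmetry, producing ${\bf\widehat{R}}_{\mu\alpha}-{\bf\widehat{R}}_{\alpha\mu}$. A direct trace of the proposed right-hand side combines $\delta^{bj}\delta_{bj}=3$ with two contributions $\delta^{bj}\delta_{aj}=\delta^b{}_a$ and $\delta^{bj}\delta_{ij}=\delta^b{}_i$ to produce the same $\widehat{\bf R}_{ia}-\widehat{\bf R}_{ai}$, and since the right-hand side is easily checked to be totally antisymmetric in $(a,b,i)$, equality follows.

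The two vanishing identities are handled by direct computation. For ${\bf\widehat{R}}_{(abi)0}$, each term in the cyclic sum is written as a Codazzi difference $-\widetilde D_\alpha K_{\beta i}+\widetilde D_\beta K_{\alpha i}$ via \eqref{Codhat} together with the last-pair antisymmetry; the six resulting pieces pair up as $\widetilde D_{\cdot}(K_{\mu\nu}-K_{\nu\mu})$ and vanish by the symmetry of $K$ (Lemma \ref{lem:symm}). For ${\bf\widehat{R}}_{(0bi)0}$, the middle term ${\bf\widehat{R}}_{bi00}$ vanishes trivially, and the two others are computed directly from \eqref{curvhat} using $\widetilde{\bf D}_{e_0}e_\mu=0$ and $[e_0,e_b]=-K_b{}^c e_c$; each reduces to $\mp(e_0 K_{bi}+K_b{}^c K_{ci})$, canceling again thanks to $K_{ij}=K_{ji}$.

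For the key identity ${\bf\widehat{R}}_{(0bi)j}$, I would first compute ${\bf\widehat{R}}_{0bij}$ from \eqref{curvhat} using $\widetilde{\bf D}_{e_0}e_\mu=0$, $[e_0,e_b]=-K_b{}^c e_c$, and $C_{0\alpha\beta}=0$, obtaining ${\bf\widehat{R}}_{0bij}=e_0\Gamma_{bij}+K_b{}^c\Gamma_{cij}$. Substituting the modified evolution equation \eqref{e0Gammaijbnew} turns this into the Codazzi expression $\widetilde D_i K_{jb}-\widetilde D_j K_{ib}$ plus the $\delta$-corrections $\delta_{bj}\widehat{\bf R}_{i0}-\delta_{bi}\widehat{\bf R}_{j0}$, the corrections being identified via the contracted Codazzi formula \eqref{hatRb0}. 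An analogous formula for ${\bf\widehat{R}}_{i0bj}=-{\bf\widehat{R}}_{0ibj}$ follows by relabeling, while ${\bf\widehat{R}}_{bi0j}$ comes directly from Codazzi \eqref{Codhat}. In the cyclic sum, the three Codazzi-type contributions combine into brackets of the form $\widetilde D_{\cdot}(K_{\alpha\beta}-K_{\beta\alpha})$ and cancel by symmetry of $K$, leaving precisely $\delta_{bj}\widehat{\bf R}_{i0}-\delta_{ij}\widehat{\bf R}_{b0}$ after the $\widehat{\bf R}_{j0}$-coefficients cancel against each other.

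I expect the main bookkeeping obstacle to be the careful matching of the $\delta$-corrections in this last identity: ${\bf\widehat{R}}_{0bij}$ and ${\bf\widehat{R}}_{i0bj}$ each produce two Kronecker-$\delta$ terms proportional to $\widehat{\bf R}_{0\cdot}$ with the three Ricci labels $i,j,b$ appearing asymmetrically, and one must verify that exactly the combination $\delta_{bj}\widehat{\bf R}_{i0}-\delta_{ij}\widehat{\bf R}_{b0}$ survives while the $\widehat{\bf R}_{j0}$-contributions cancel, and simultaneously that the three Codazzi brackets line up as differences of symmetric parts of $K$ so as to drop out.
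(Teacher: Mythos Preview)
Your proposal is correct, and for the three identities on the second line of \eqref{cyclRiem} it follows the same route as the paper: the paper likewise computes ${\bf\widehat{R}}_{0b0i}=e_0K_{bi}+K_b{}^lK_{li}$ directly to get ${\bf\widehat{R}}_{(0bi)0}=0$, uses Codazzi \eqref{Codhat} and the symmetry of $K$ for ${\bf\widehat{R}}_{(abi)0}=0$, and for ${\bf\widehat{R}}_{(0bi)j}$ computes ${\bf\widehat{R}}_{0bij}=e_0\Gamma_{bij}+K_b{}^c\Gamma_{cij}$, rewrites this via the modified evolution equation (recorded there as \eqref{eq.Gammanewcov}), and lets the Codazzi pieces cancel by symmetry of $K$.

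For the first identity your approach genuinely differs. The paper argues by cases: if any two of $a,b,i$ coincide both sides vanish; if all three are distinct then in three dimensions $j$ must equal one of them, say $j=a$, and one checks directly that ${\bf\widehat{R}}_{(abi)a}={\bf\widehat{R}}_{bi}-{\bf\widehat{R}}_{ib}-({\bf\widehat{R}}_{0b0i}-{\bf\widehat{R}}_{0i0b})$, the last bracket vanishing by the computation above. Your trace/Hodge argument is cleaner and avoids the case split, but note one bookkeeping point: the identity $g^{\beta\nu}{\bf\widehat{R}}_{(\alpha\beta\mu)\nu}={\bf\widehat{R}}_{\mu\alpha}-{\bf\widehat{R}}_{\alpha\mu}$ you derive is the \emph{spacetime} trace, whereas the three-dimensional isomorphism argument requires the \emph{spatial} trace $\delta^{bj}{\bf\widehat{R}}_{(abi)j}$. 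These differ exactly by ${\bf\widehat{R}}_{(a0i)0}$, which you prove is zero in your second paragraph --- so the argument closes, but you should make this dependence explicit and order the proof so that the vanishing of ${\bf\widehat{R}}_{(0bi)0}$ is established before it is used.
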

\begin{proof}
For the first identity, we notice that if either of $a,b,i$ coincide, both sides are trivially zero. In the case where $a,b,i$ are all distinct, $j$ must coincide with one of them (since $\Sigma_t$ is $3$-dimensional), say $j=a$. Then we have 
\begin{align}\label{cyclspatial}
{\bf\widehat{R}}_{bi}-{\bf\widehat{R}}_{ib}=&\,{\bf\widehat{R}}_{\lambda bi}{}^\lambda-{\bf\widehat{R}}_{\lambda ib}{}^\lambda\overset{\eqref{hatantisRiem}}{=}{\bf\widehat{R}}_{abia}+{\bf\widehat{R}}_{iaba}+{\bf\widehat{R}}_{0b0i}-{\bf\widehat{R}}_{0i0b}={\bf\widehat{R}}_{(abi)a}+{\bf\widehat{R}}_{0b0i}-{\bf\widehat{R}}_{0i0b}
\end{align}
On the other hand, using the symmetry of $K_{bi}$ it holds
\begin{align}\label{R0b0i=R0i0b}
{\bf\widehat{R}}_{0b0i}=&\,{\bf g}(({\bf \widetilde{D}}_{e_0}{\bf \widetilde{D}}_{e_b}-{\bf \widetilde{D}}_{e_b}{\bf \widetilde{D}}_{e_0}-{\bf \widetilde{D}}_{\widetilde{\bf D}_{e_0}e_b-\widetilde{\bf D}_{e_b}e_0})e_0,e_i)=e_0K_{bi}+K_b{}^lK_{li}={\bf\widehat{R}}_{0i0b},
\end{align}
Combining \eqref{cyclspatial}-\eqref{R0b0i=R0i0b} yeilds the first identity in \eqref{cyclRiem}. Also, \eqref{R0b0i=R0i0b} implies the first part of the second identity in \eqref{cyclRiem} regarding ${\bf \widehat{R}}_{(0bi)0}=0$.

Next, we employ \eqref{Codhat} to infer:
\begin{align*}
{\bf \widehat{R}}_{(abi)0}=&-{\bf \widehat{R}}_{ab0i}-{\bf \widehat{R}}_{bi0a}-{\bf \widehat{R}}_{ia0b}\\
=&-\widetilde{D}_aK_{bi}+\widetilde{D}_bK_{ai}-\widetilde{D}_bK_{ia}+\widetilde{D}_iK_{ba}-\widetilde{D}_iK_{ab}+\widetilde{D}_aK_{ib}\\
=&\,0
\end{align*}
{}{To prove the last identity in \eqref{cyclRiem} we utilise the reduced equation \eqref{e0Gammaijbnew}, which we rewrite in a more covariant way using \eqref{hatRb0}:
\begin{align}\label{eq.Gammanewcov}
e_0\Gamma_{ijb}+K_i{}^c\Gamma_{cjb}=\widetilde{D}_jK_{bi}-\widetilde{D}_bK_{ji}+\delta_{ib}\widehat{\bf R}_{j0}
-\delta_{ij}\widehat{\bf R}_{b0}
\end{align}
Appealing to the symmetry of $K$ once more, we compute:}  
\begin{align*}
{\bf \widehat{R}}_{(0bi)j}=&\,{\bf \widehat{R}}_{0bij}+{\bf \widehat{R}}_{bi0j}-{\bf \widehat{R}}_{0ibj}\\
=&\,{\bf g}(({\bf \widetilde{D}}_{e_0}{\bf \widetilde{D}}_{e_b}-{\bf \widetilde{D}}_{e_b}{\bf \widetilde{D}}_{e_0}-{\bf \widetilde{D}}_{\widetilde{\bf D}_{e_0}e_b-\widetilde{\bf D}_{e_b}e_0})e_i,e_j)\\
&\tag{by \eqref{Codhat}}+\widetilde{D}_bK_{ij}-\widetilde{D}_iK_{bj}
-{\bf g}(({\bf \widetilde{D}}_{e_0}{\bf \widetilde{D}}_{e_i}-{\bf \widetilde{D}}_{e_i}{\bf \widetilde{D}}_{e_0}-{\bf \widetilde{D}}_{\widetilde{\bf D}_{e_0}e_i-\widetilde{\bf D}_{e_i}e_0})e_b,e_j)\\
=&\,e_0\Gamma_{bij}+K_b{}^c\Gamma_{cij}-e_0\Gamma_{ibj}-K_i{}^c\Gamma_{cbj}
+\widetilde{D}_bK_{ij}-\widetilde{D}_iK_{bj}\\
\tag{by \eqref{eq.Gammanewcov}}=&\,\widetilde{D}_iK_{jb}-\widetilde{D}_bK_{ji}-\delta_{ij}{\bf\widehat{R}}_{b0}+\delta_{bj}{\bf\widehat{R}}_{i0}
+\widetilde{D}_bK_{ij}-\widetilde{D}_iK_{bj}\\
=&-\delta_{ij}{\bf\widehat{R}}_{b0}+\delta_{bj}{\bf\widehat{R}}_{i0},
\end{align*}
as asserted.
\end{proof}

Recall that we symmetrized the RHS of \eqref{e0Kij}, such that the symmetry of $K_{ij}$ is automatically propagated off of the initial hypersurface. Consequently, we must treat the symmetrized and antisymmetrized Ricci tensors as different variables:
\begin{align}\label{hatASRic}
{\bf\widehat{R}}^{{}{(S)}}_{ij}=\frac{1}{2}({\bf\widehat{R}}_{ij}+{\bf\widehat{R}}_{ji})={\bf\widehat{R}}^{{}{(S)}}_{ji},\qquad {\bf\widehat{R}}^{{}{(A)}}_{ij}=\frac{1}{2}({\bf\widehat{R}}_{ij}-{\bf\widehat{R}}_{ji})=-{\bf\widehat{R}}^{{}{(A)}}_{ji}.
\end{align}
With the above lemmas at our disposal, we derive the following propagation equations for the variables that should vanish.
\begin{lemma}\label{lem:systRic}
The variables ${\bf \widehat{R}}_{\beta\mu},C_{ijb}$ satisfy the following system of equations:
\begin{align}
\label{e0Cijb}e_0C_{ijb}=&\,{}{K_b{}^lC_{ijl}}-K_i{}^lC_{ljb}-K_j{}^lC_{ilb}-\delta_{ib}{\bf \widehat{R}}_{j0}+\delta_{jb}{\bf \widehat{R}}_{i0},\\
\label{e0R0b}e_0{\bf \widehat{R}}_{i0}=&\,e_i{\bf \widehat{R}}_{00}
+e^a{\bf \widehat{\bf R}}_{ia}^{{}{(A)}}
-\Gamma^a{}_i{}^{\beta}{\bf \widehat{\bf R}}_{\beta a}-\Gamma_a{}^{a\beta}{\bf \widehat{\bf R}}_{i\beta}
{}{-L_i(C,\widehat{\bf R})},\qquad {\bf \widehat{R}}_{0i}=-{\bf \widehat{R}}_{i0},\\
\label{R00eq}e_0{\bf \widehat{R}}_{00}=&\,e^a{\bf \widehat{R}}_{a0}
-\Gamma_a{}^{ab}{\bf \widehat{R}}_{b0}{}{+L_0(C,\widehat{\bf R})},\\
\label{RijAeq}e_0{\bf \widehat{R}}_{ij}^{{}{(A)}}=&\,\frac{1}{2}(e_j{\bf\widehat{R}}_{i0}-e_i{\bf\widehat{R}}_{j0}){}{-K_i{}^l{\bf\widehat{R}}_{lj}^{{}{(A)}}}-K_j{}^l{\bf\widehat{R}}_{il}^{{}{(A)}}
-\frac{1}{2}K^{bl}{\bf\widehat{R}}_{(ijl)b}{}{+\frac{1}{2}K^{bl}\widehat{\bf R}_{(ijb)l}+{}{M_{ij}(C,\widehat{\bf R})}},\\
\label{Rijeq}{\bf \widehat{R}}_{ij}^{{}{(S)}}=&-\delta_{ij}{\bf \widehat{R}}_{00},
\end{align}
where 
\begin{align}
\notag{}{2 L_\mu(C,\widehat{\bf R})}=&-C_{\mu\alpha l}({\bf \widehat{R}}^l{}_{\beta\gamma\delta}-C^l{}_{\beta\nu}\Gamma^\nu{}_{\gamma\delta})-C_{\alpha\beta l}({\bf \widehat{R}}^l{}_{\mu\gamma\delta}-C^l{}_{\mu\nu}\Gamma^\nu{}_{\gamma\delta})-C_{\beta\mu l}({\bf \widehat{R}}^l{}_{\alpha\gamma\delta}-C^l{}_{\alpha\nu}\Gamma^\nu{}_{\gamma\delta})\\
\label{Lmu}&+\bigg[{\bf \widehat{R}}_{\alpha\beta\mu\nu}+{\bf \widehat{R}}_{\beta\mu\alpha\nu}+{\bf \widehat{R}}_{\mu\alpha\beta\nu}{}{+
C_{\alpha\beta}{}^l C_{l\mu\nu}+C_{\mu\alpha}{}^l C_{l\beta\nu}+C_{\beta\mu}{}^l C_{l\alpha\nu}}{}{-C_{\alpha\beta}{}^\lambda\Gamma_{\lambda\mu\nu}}\\
&\notag{}{-C_{\beta\mu}{}^\lambda\Gamma_{\lambda\alpha\nu}-C_{\mu\alpha}{}^\lambda\Gamma_{\lambda\beta\nu}}
\bigg]\Gamma_{\nu\gamma\delta}-C_{\alpha\beta}{}^\nu{\bf \widetilde{D}}_\mu\Gamma_{\nu\gamma\delta}
-C_{\beta\mu}{}^\nu{\bf \widetilde{D}}_\alpha\Gamma_{\nu\gamma\delta}
-C_{\mu\alpha}{}^\nu{\bf \widetilde{D}}_\beta\Gamma_{\nu\gamma\delta},\\
\notag {}{M_{ij}(C,\widehat{\bf R})}=&-\frac{1}{2}e_0L_{ij}(C)-\frac{1}{2}K_i{}^lL_{lj}(C)-\frac{1}{2}K_j{}^lL_{il}(C)-\frac{1}{2}K^{bl}L_{ijlb}(C)+\frac{1}{2}K^{bl}L_{ijbl}(C)\\
\label{M(C)}&-\frac{1}{2}\bigg[C_{ijl}e^bK_b{}^l+C_{jbl}e_iK^{bl}+C_{bil}e_jK^{bl}
-C_{ljb}e^bK_i{}^l-C_{ilb}e^bK_j{}^l
-C_{lb}{}^be_iK_j{}^l\\
\notag&-C_{jlb}e_iK^{bl}
-C_{lib}e_jK^{bl}-C_{bl}{}^be_jK_i{}^l\bigg],\\
\label{Lij}
{}{L_{ij}(C)}=&\,\widetilde{D}^b C_{ijb}+\widetilde{D}_i C_{jb}{}^b+\widetilde{D}_j C_{bi}{}^b-e^b C_{ijb}-e_i C_{jb}{}^b-e_j C_{bi}{}^b\\
\notag&+C_{ij}{}^l C_{lb}{}^b+C_{b i}{}^l C_{lj}{}^b+C_{jb}{}^l C_{li}{}^b-C_{jb}{}^l\Gamma_{l i}{}^b-C_{b i}{}^l\Gamma_{l j}{}^b,\\
\label{Lijbl}
{}{L_{ijbl}(C)}=&\,\widetilde{D}_bC_{ijl}+\widetilde{D}_iC_{jbl}+\widetilde{D}_jC_{bil}-e_bC_{ijl}-e_iC_{jbl}-e_jC_{bil}\\
\notag&-C_{ij}{}^d C_{dbl}-C_{bi}{}^d C_{djl}-C_{jb}{}^d C_{dil}+C_{ij}{}^d\Gamma_{dbl}+C_{jb}{}^d\Gamma_{dil}+C_{bi}{}^d\Gamma_{djl}.
\end{align}
\end{lemma}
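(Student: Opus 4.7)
The overall strategy is to derive each of the five equations by combining the two Bianchi identities of Lemma~\ref{lem:Dhat}, the modified ADM evolution equations \eqref{e0Kijnew}--\eqref{e0Gammaijbnew}, and the cyclic curvature identities of Lemma~\ref{lem:cyclRiem}. The source terms $L_\mu(C,\widehat{\bf R})$ and $M_{ij}(C,\widehat{\bf R})$ will collect everything quadratic in $(C,\widehat{\bf R})$ that arises as remainders; the point is that the resulting system is symmetric hyperbolic, so that the vanishing of the initial data for $C$ and $\widehat{\bf R}$ will propagate.

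\smallskip

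\textit{For \eqref{e0Cijb}:} I would set $\alpha=0,\beta=i,\mu=j,\nu=b$ in the first Bianchi identity \eqref{hatfBian}. The three $\widetilde{\bf D} C$ terms simplify considerably because $C_{0\alpha\beta}=C_{\alpha 0\beta}=C_{\alpha\beta 0}=0$ (see \eqref{Cijb}): $\widetilde{\bf D}_0 C_{ijb}=e_0 C_{ijb}$ since $\Gamma_{0\alpha\beta}=0$, whereas $\widetilde{\bf D}_j C_{0ib}=-\Gamma_{j0}{}^l C_{lib}=K_j{}^l C_{ilb}$ and similarly $\widetilde{\bf D}_i C_{j0b}=K_i{}^l C_{ljb}$. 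The cubic $C\cdot C$ terms all vanish for the same reason, and of the $C\cdot\Gamma$ terms only $-C_{ij}{}^\lambda\Gamma_{\lambda 0 b}=-C_{ij}{}^l K_{lb}$ survives. Finally, \eqref{cyclRiem} gives ${\bf\widehat R}_{(0ij)b}=-\delta_{jb}{\bf\widehat R}_{i0}+\delta_{ib}{\bf\widehat R}_{j0}$, and rearrangement yields \eqref{e0Cijb}.

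\smallskip

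\textit{For \eqref{Rijeq}:} This is an algebraic identity and should be proved first. Contract the Gauss equation \eqref{Gausshat} in $(a;b)$ and use \eqref{R0b0i=R0i0b} to obtain
\[
{\bf\widehat R}_{ij}=\widehat R_{ij}+e_0 K_{ij}+\mathrm{tr}K\,K_{ij},
\]
where the quadratic $K\cdot K$ terms cancel thanks to the symmetry of $K$ guaranteed by Lemma~\ref{lem:symm}. On the other hand, the modified evolution equation \eqref{e0Kijnew} is precisely designed so that, upon using \eqref{spatialR},
\[
e_0 K_{ij}+\mathrm{tr}K\,K_{ij}=-\widehat R^{(S)}_{ij}+\tfrac12\delta_{ij}\bigl[\widehat R+(\mathrm{tr}K)^2-|K|^2\bigr].
\]
Adding, $\widehat R_{ij}^{(S)}$ cancels and ${\bf\widehat R}^{(S)}_{ij}=\tfrac12\delta_{ij}[\widehat R+(\mathrm{tr}K)^2-|K|^2]$. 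Tracing in $(i;j)$ and using the doubly contracted Gauss equation ${\bf\widehat R}^l{}_l+{\bf\widehat R}_{00}=\widehat R+(\mathrm{tr}K)^2-|K|^2$ (derived the same way using the antisymmetries of $\widehat{\bf R}$) then yields ${\bf\widehat R}^l{}_l=-3{\bf\widehat R}_{00}$, hence \eqref{Rijeq}.

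\smallskip

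\textit{For \eqref{e0R0b}, \eqref{R00eq}, \eqref{RijAeq}:} These follow by suitable contractions of the twice-modified second Bianchi identity \eqref{hatsBian}. For \eqref{R00eq} I would contract so as to produce $\widetilde{\bf D}^\mu \widehat{\bf R}_{\mu 0}$ with $\gamma=\delta=0$, using the antisymmetry \eqref{hatantisRiem} to turn the divergence into $e_0{\bf\widehat R}_{00}+e^a{\bf\widehat R}_{a0}$ plus connection pieces, and collecting everything proportional to $C$ into $L_0$ via \eqref{Lmu}. For \eqref{e0R0b} the analogous contraction with free index $\gamma=i$, $\delta=0$ produces $e_0{\bf\widehat R}_{i0}$ on one side and $e_i{\bf\widehat R}_{00}$ plus a spatial divergence that, via the antisymmetric Ricci identity \eqref{hataRic}, can be reorganized as $e^a{\bf\widehat R}^{(A)}_{ia}$ up to torsion terms absorbed into $L_i(C,\widehat{\bf R})$. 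Equation \eqref{RijAeq} is the most delicate: differentiating the antisymmetric part $\widehat{\bf R}^{(A)}_{ij}$ in $e_0$ and using \eqref{hataRic} leads to $e_0$ derivatives of $\widetilde{\bf D}^\mu C_{ij\mu}$ and $\widetilde{\bf D}_i C_{j\mu}{}^\mu+\widetilde{\bf D}_j C_{\mu i}{}^\mu$, which I would eliminate using \eqref{e0Cijb} from Step~1 together with the first Bianchi identity \eqref{hatfBian}; the cyclic curvature sums ${\bf\widehat R}_{(ijl)b}$ and ${\bf\widehat R}_{(ijb)l}$ that appear are then expressed through \eqref{cyclRiem} in terms of Ricci components, and the residual quadratic terms in $(C,\widehat{\bf R})$ are lumped into $M_{ij}(C,\widehat{\bf R})$.

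\smallskip

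\textit{Main obstacle.} The principal difficulty is ensuring that no second derivatives of $C$ or $\widehat{\bf R}$, and no first spatial derivatives of $C$ with a bad sign, are generated in the source terms. This is exactly the reason for passing from $\widetilde{\bf R}$ to the modified curvature $\widehat{\bf R}$ in \eqref{curvhat}: the extra $C\cdot\Gamma$ terms in \eqref{hatfBian} and \eqref{hatsBian} are arranged so that the first Bianchi identity can be used to trade any problematic $\widetilde{\bf D} C$ expression for a cyclic curvature sum (which, by Lemma~\ref{lem:cyclRiem}, is in turn algebraic in the Ricci components). The careful accounting that no term with the wrong principal part survives, and the verification that the principal part of the resulting system coincides with the symmetric hyperbolic one \eqref{e0Cijb}--\eqref{RijAeq}, is the bulk of the work.
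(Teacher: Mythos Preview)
Your proposal is essentially correct and follows the paper's strategy closely. Two points of comparison are worth noting.

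For \eqref{e0Cijb} the paper does \emph{not} invoke the first Bianchi identity; instead it directly differentiates the explicit formula \eqref{Cijb} for $C_{ijb}$ in $e_0$, plugging in the evolution equations \eqref{e0fij}, \eqref{e0fijinv}, \eqref{e0Gammaijbnew} (in the covariant form \eqref{eq.Gammanewcov}) and the commutator \eqref{comm}. Your route through \eqref{hatfBian} with $\alpha=0$, combined with the already-established identity ${\bf\widehat R}_{(0ij)b}=-\delta_{jb}{\bf\widehat R}_{i0}+\delta_{ib}{\bf\widehat R}_{j0}$ from Lemma~\ref{lem:cyclRiem}, is a clean alternative that works equally well and is arguably more transparent.

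For \eqref{e0R0b}, your hint to use \eqref{hataRic} to ``reorganize'' the spatial divergence is slightly off. The paper contracts \eqref{hatsBian} in $(\alpha;\delta)$ and $(\beta;\gamma)$ to obtain $\widetilde{\bf D}^\alpha{\bf\widehat R}_{\mu\alpha}=\tfrac12\widetilde{\bf D}_\mu{\bf\widehat R}+L_\mu(C,{\bf\widehat R})$; setting $\mu=i$ and using ${\bf\widehat R}=-4{\bf\widehat R}_{00}$ gives $e_0{\bf\widehat R}_{i0}=2e_i{\bf\widehat R}_{00}+e^a{\bf\widehat R}^{(S)}_{ia}+e^a{\bf\widehat R}^{(A)}_{ia}+\ldots$. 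The symmetric piece $e^a{\bf\widehat R}^{(S)}_{ia}$ is then eliminated not via \eqref{hataRic} (which concerns the \emph{anti}symmetric part) but via the algebraic identity \eqref{Rijeq} you derived first, giving $e^a{\bf\widehat R}^{(S)}_{ia}=-e_i{\bf\widehat R}_{00}$ and hence the single $e_i{\bf\widehat R}_{00}$ term in \eqref{e0R0b}. This is a minor slip; the rest of your outline matches the paper.
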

\begin{remark}\label{rem:Ricsym}
{}{The system \eqref{e0Cijb}-\eqref{RijAeq} constitutes a (linear, homogeneous) first order symmetric hyperbolic system for the variables $C_{ijb},\widehat{\bf R}_{i0},\widehat{\bf R}_{00},\widehat{\bf R}_{ij}^{{}{(A)}}$. Indeed, we notice that $L_\mu(C,\widehat{R}),M_{ij}(C,\widehat{\bf R}),\widehat{\bf R}_{(ijb)l}$ can be viewed, by virtue of Lemma \ref{lem:cyclRiem} and \eqref{Cijb}, \eqref{e0Cijb}, as linear expressions in the unknowns, with coefficients depending on the solution $K_{ij},\Gamma_{ijb},f_i{}^j,f^b{}_j$ to the reduced equations \eqref{e0fij}, \eqref{e0fijinv}, \eqref{e0Kijnew}, \eqref{e0Gammaijbnew} and their first derivatives.}
\end{remark}
\begin{proof}
We compute \eqref{e0Cijb} by directly differentiating \eqref{Cijb} and using the commutation formula \eqref{comm}, the evolution equations \eqref{e0fij}, \eqref{e0fijinv}, \eqref{eq.Gammanewcov}, Lemma \ref{lem:symm}, along with \eqref{hatRb0}: 
\begin{align}\label{e0Cijb2}
\notag e_0C_{ijb}=&\,e_0(f^b{}_le_if_j{}^l-f^b{}_le_jf_i{}^l-\Gamma_{ijb}+\Gamma_{jib})\\
\notag=&\,K_c{}^bf^c{}_le_if_j{}^l-f^b{}_le_i(K_j{}^cf_c{}^l)-f^b{}_lK_i{}^ce_cf_j{}^l-K_c{}^bf^c{}_le_jf_i{}^l+f^b{}_le_j(K_i{}^cf_c{}^l)+f^b{}_lK_j{}^ce_cf_i{}^l\\
\notag&-\bigg[-K_i{}^l\Gamma_{ljb}+\widetilde{D}_jK_{bi}-\widetilde{D}_bK_{ji}
+\delta_{ib}\widehat{\bf R}_{j0}-\delta_{ij}\widehat{\bf R}_{b0}\bigg]\\
\notag&+\bigg[-K_j{}^l\Gamma_{lib}+\widetilde{D}_iK_{bj}-\widetilde{D}_bK_{ij}
+\delta_{jb}\widehat{\bf R}_{i0}-\delta_{ij}\widehat{\bf R}_{b0}\bigg]\\
=&\,{}{K_b{}^lC_{ijl}}-K_i{}^lC_{ljb}-K_j{}^lC_{ilb}-\delta_{ib}{\bf \widehat{R}}_{j0}+\delta_{jb}{\bf\widehat{R}}_{i0}
\end{align}
Moreover, a direct computation shows that
\begin{align}
\notag{\bf \widehat{R}}_{0b}=-{\bf \widehat{R}}_{0ib}{}^i=&-{\bf g}(({\bf \widetilde{D}}_{e_0}{\bf \widetilde{D}}_{e_i}-{\bf \widetilde{D}}_{e_i}{\bf \widetilde{D}}_{e_0}-{\bf \widetilde{D}}_{\widetilde{\bf D}_{e_0}e_i-\widetilde{\bf D}_{e_i}e_0})e_b,e^i)\\
\label{Rb0R0bhat}=&-e_0\Gamma_{ib}{}^i-K_i{}^c\Gamma_{cb}{}^i\overset{\eqref{eq.Gammanewcov},\eqref{hatRb0}}{=}-{\bf \widehat{R}}_{b0}
\end{align}
Also, contracting \eqref{Gausshat} in $(a;b)$ and $(i;j)$ we obtain
\begin{align}
\label{contrGausshat}{\bf \widehat{R}}+2{\bf \widehat{R}}_{00}=&\,\widehat{R}-|K|^2+(\text{tr}K)^2,
\end{align}
while 
\begin{align}
\notag{\bf \widehat{R}}_{0i0j}=&-{\bf \widehat{R}}_{0ij0}={\bf \widehat{R}}_{ij}-{\bf \widehat{R}}_{bij}{}^b\overset{\eqref{Gausshat}}{=}
{\bf \widehat{R}}_{ij}-\widehat{R}_{ij}-\text{tr}KK_{ij}+{K_i}^bK_{jb}\\
\tag{by \eqref{R0b0i=R0i0b}}{\bf \widehat{R}}_{0i0j}=&\,e_0K_{ij}+K_i{}^bK_{bj}\\
\label{R0i0jhat}&\Rightarrow e_0K_{ij}+\text{tr}K K_{ij}=-\widehat{R}_{ij}+{\bf\widehat{R}}_{ij}
\end{align} 
Contracting \eqref{hatRaijb} and using the antisymmetry of $\Gamma_{ijb}$ (see Lemma \ref{lem:symm}), the spatial Ricci tensor in the preceding RHS expands to 
\begin{align}\label{Rijhat}
-\widehat{R}_{ij}:=-\widehat{R}_{bij}{}^b=e_i\Gamma^b{}_{jb}-e^b\Gamma_{ijb}
{}{+\Gamma^b{}_i{}^c\Gamma_{cjb}+\Gamma^b{}_b{}^c\Gamma_{ijc}}
\end{align}
By the symmetry of $K_{ij}$ we also have 
\begin{align}\label{e0Kijhat}
e_0K_{ij}+\text{tr}K K_{ij}=-\widehat{R}_{ij}^{{}{(S)}}+{\bf\widehat{R}}_{ij}^{{}{(S)}}.
\end{align} 
Due to \eqref{Rijhat}, we find that the reduced equation \eqref{e0Kijnew} corresponds to {}{(cf. \eqref{spatialR} and Remark \ref{rem:addterm})}
\begin{align}\label{e0KijRic2}
e_0K_{ij}+\text{tr}KK_{ij}=&-\widehat{R}_{ij}^{{}{(S)}}+\frac{1}{2}\delta_{ij}[\widehat{R}-|K|^2+(\text{tr}K)^2]
\end{align}
Combining \eqref{contrGausshat}-\eqref{e0KijRic2}we deduce the identities:
\begin{align}\label{Ricid}
\frac{1}{2}({\bf \widehat{R}}_{ij}+{\bf \widehat{R}}_{ji})=&\,\frac{1}{2}\delta_{ij}[{\bf \widehat{R}}+2{\bf \widehat{R}}_{00}]
\end{align}
Contracting indices in \eqref{Ricid} gives 
\begin{align}\label{Rhat}
{\bf \widehat{R}}+{\bf \widehat{R}}_{00}=&\,\frac{3}{2}[{\bf \widehat{R}}+2{\bf \widehat{R}}_{00}]\qquad\Rightarrow\qquad {\bf \widehat{R}}=-4{\bf \widehat{R}}_{00}\qquad
\Rightarrow\qquad\frac{1}{2}({\bf \widehat{R}}_{ij}+{\bf \widehat{R}}_{ji})=-\delta_{ij}{\bf \widehat{R}}_{00},
\end{align}
which confirms \eqref{Rijeq}.

Next, we contract the second Bianchi identity \eqref{hatsBian}
in the indices $(\alpha;\delta)$ and $(\beta;\gamma)$ to obtain:
\begin{align}\label{contrsecondBianchi}
{\bf \widetilde{D}}^\alpha{\bf \widehat{R}}_{\mu\alpha}=&\,\frac{1}{2}{\bf \widetilde{D}}_\mu{\bf \widehat{R}}+L_\mu(C,{\bf\widehat{R}}),
\end{align}
where $L_\mu(C,{\bf\widehat{R}})$ is given by \eqref{Lmu}.
Hence, for $\mu=i=1,2,3$, we deduce the equation 
\begin{align*}
e_0{\bf \widehat{R}}_{i0}\overset{\eqref{Ddef}}{=}&\,{\bf \widetilde{D}}_0{\bf \widehat{R}}_{i0}=-\frac{1}{2}e_i{\bf \widehat{R}}+{\bf \widetilde{D}}^a{\bf \widehat{\bf R}}_{ia}{}{-L_i(C,\widehat{\bf R})}\\
\overset{\eqref{Rhat}}{=}&\,2e_i{\bf \widehat{R}}_{00}+e^a{\bf \widehat{\bf R}}_{ia}^{{}{(S)}}+e^a{\bf \widehat{\bf R}}_{ia}^{{}{(A)}}
-\Gamma^a{}_i{}^{\beta}{\bf \widehat{\bf R}}_{\beta a}-\Gamma_a{}^{a\beta}{\bf \widehat{\bf R}}_{i\beta}
{}{-L_i(C,\widehat{\bf R})}\\
\overset{\eqref{Rijeq}}{=}&\,e_i{\bf \widehat{R}}_{00}
+e^a{\bf \widehat{\bf R}}_{ia}^{{}{(A)}}
-\Gamma^a{}_i{}^{\beta}{\bf \widehat{\bf R}}_{\beta a}-\Gamma_a{}^{a\beta}{\bf \widehat{\bf R}}_{i\beta}
{}{-L_i(C,\widehat{\bf R})}
\end{align*}
which proves \eqref{e0R0b}.

Employing the identity \eqref{contrsecondBianchi} once more, for $\mu=0$, we have
\begin{align*}
 e_0{\bf \widehat{R}}_{00}\overset{\eqref{Ddef}}{=}&\,{\bf \widetilde{D}}_0{\bf \widehat{R}}_{00}=-\frac{1}{2}e_0{\bf \widehat{R}}+{\bf \widetilde{D}}^a{\bf \widehat{R}}_{0a}{}{-L_0(C,\widehat{\bf R})}\\
\overset{\eqref{Rhat}}{=}&\,2e_0{\bf \widehat{R}}_{00}+e^a{\bf \widehat{R}}_{0a}-K^{ab}{\bf \widehat{R}}_{ba}^{{}{(S)}}-\text{tr}K{\bf \widehat{R}}_{00}-\Gamma_a{}^{ab}{\bf \widehat{R}}_{0b}{}{-L_0(C,\widehat{\bf R})}
\end{align*}
Solving for $e_0{\bf \widetilde{R}}_{00}$ and using \eqref{Rijeq}, \eqref{Rb0R0bhat}, we obtain \eqref{R00eq}.

{}{Going back to \eqref{hataRic}, we put $\alpha=i,\beta=j$ and use \eqref{Cijb} to keep only the spatial part of the identity.} Differentiating both sides in $e_0$ and using the commutation formula \eqref{comm} we compute:
{}{
\begin{align}\label{RAhatder}
-2e_0{\bf\widehat{R}}^{{}{(A)}}_{ij}=&\,e_0\bigg[\widetilde{D}^b C_{ijb}+\widetilde{D}_i C_{jb}{}^b+ \widetilde{D}_j C_{b i}{}^b
+C_{ij}{}^l C_{lb}{}^b+C_{b i}{}^l C_{lj}{}^b+C_{jb}{}^l C_{li}{}^b-C_{jb}{}^l\Gamma_{l i}{}^b-C_{b i}{}^l\Gamma_{l j}{}^b\bigg]\\
\notag=&\,e^be_0C_{ijb}+e_ie_0C_{jb}{}^b+e_je_0C_{bi}{}^b-K^{bl}e_lC_{ijb}-K_i{}^le_lC_{jb}{}^b-K_j{}^le_lC_{bi}{}^b+e_0L_{ij}(C)
\end{align}
where $L_{ij}(C)$ is given by \eqref{Lij}}.
We rewrite the second line in \eqref{RAhatder} by plugging in \eqref{e0Cijb}:
\begin{align}\label{RAhatder2}
\notag&e^be_0C_{ijb}+e_ie_0C_{jb}{}^b+e_je_0C_{bi}{}^b-K^{bl}e_lC_{ijb}-K_i{}^le_lC_{jb}{}^b-K_j{}^le_lC_{bi}{}^b\\
\notag=&\,e^b\bigg[{}{K_b{}^lC_{ijl}}-K_i{}^lC_{ljb}-K_j{}^lC_{ilb}-\delta_{ib}{\bf \widehat{R}}_{j0}+\delta_{jb}{\bf \widehat{R}}_{i0}\bigg]
+e_i\bigg[{}{K^{bl}C_{jbl}}-K_j{}^lC_{lb}{}^b-K^{bl}C_{jlb}-{\bf \widehat{R}}_{j0}+3{\bf \widehat{R}}_{j0}\bigg]\\
\notag&+e_j\bigg[{}{K^{bl}C_{bil}}-K^{bl}C_{lib}-K_i{}^lC_{bl}{}^b-3{\bf \widehat{R}}_{i0}+{\bf \widehat{R}}_{i0}\bigg]-K^{bl}e_lC_{ijb}-K_i{}^le_lC_{jb}{}^b-K_j{}^le_lC_{bi}{}^b\\
=&\,e_i{\bf\widehat{R}}_{j0}-e_j{\bf\widehat{R}}_{i0}
{}{+K^{bl}(e_bC_{ijl}+e_iC_{jbl}+e_jC_{bil})}
-K_i{}^l(e^bC_{ljb}+e_jC_{bl}{}^b+e_lC_{jb}{}^b)\\
\notag&-K_j{}^l(e^bC_{ilb}+e_iC_{lb}{}^b+e_lC_{bi}{}^b)
-K^{bl}(e_iC_{jlb}+e_jC_{lib}+e_lC_{ijb})
{}{+C_{ijl}e^bK_b{}^l+C_{jbl}e_iK^{bl}+C_{bil}e_jK^{bl}}\\
\notag&-C_{ljb}e^bK_i{}^l-C_{ilb}e^bK_j{}^l
-C_{lb}{}^be_iK_j{}^l-C_{jlb}e_iK^{bl}
-C_{lib}e_jK^{bl}-C_{bl}{}^be_jK_i{}^l
\end{align}
On the other hand, from \eqref{hataRic} and the first Bianchi identity \eqref{hatfBian}, the spatial derivatives of $C_{ijb}$ in \eqref{RAhatder2} can be replaced by
\begin{align}\label{RAhatder3}
\notag&{}{K^{bl}(e_bC_{ijl}+e_iC_{jbl}+e_jC_{bil})}-K_i{}^l(e^bC_{ljb}+e_jC_{bl}{}^b+e_lC_{jb}{}^b)
-K_j{}^l(e^bC_{ilb}+e_iC_{lb}{}^b+e_lC_{bi}{}^b)\\
&-K^{bl}(e_iC_{jlb}+e_jC_{lib}+e_lC_{ijb})\\
\notag=&\,{}{2K_i{}^l{\bf\widehat{R}}_{lj}^{{}{(A)}}}+2K_j{}^l{\bf\widehat{R}}_{il}^{{}{(A)}}
+K^{bl}{\bf\widehat{R}}_{(ijl)b}{}{-K^{bl}\widehat{\bf R}_{(ijb)l}}{}{+K_i{}^lL_{lj}(C)+K_j{}^lL_{il}(C)+K^{bl}[L_{ijlb}(C)-L_{ijbl}(C)]},
\end{align}
where $L_{ijbl}$ is given by \eqref{Lijbl}.

Summarizing \eqref{RAhatder}-\eqref{Lijbl} gives \eqref{RijAeq} and completes the proof of the lemma.
\end{proof}
{}{In the presence of a timelike, totally geodesic, boundary, the boundary conditions \eqref{bdcond} yield boundary conditions for certain components of the modified Ricci curvature $\widehat{R}_{\alpha\beta}$. In particular, we have:}
\begin{lemma}\label{lem:bdcond.Ric}
{}{The spacetime metric ${\bf g}$ induced by the solution to the boundary problem for \eqref{e0fij}, \eqref{e0Kijnew}, \eqref{e0Gammaijbnew}, subject to \eqref{bdcond}, as described above, satisfies: }
\begin{align}\label{bdcondRic}
{\bf \widehat{R}}_{03}={\bf \widehat{R}}_{30}=0,\qquad {\bf\widehat{R}}_{B3}^{{}{(A)}}={\bf\widehat{R}}_{3B}^{{}{(A)}}=0,\qquad\text{on $\mathcal{T}$}.
\end{align}

\end{lemma}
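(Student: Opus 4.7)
The plan is a direct algebraic verification using three inputs: the primary boundary conditions \eqref{bdcond}, the first-order compatibility conditions \eqref{compcond} (which in fact hold pointwise on all of $\mathcal{T}$ by the same derivation as in Lemma \ref{lem:compcond}, since \eqref{bdcond} holds everywhere on $\mathcal{T}$), and the antisymmetry $\Gamma_{ijb}=-\Gamma_{ibj}$, from which one infers that further components such as $\Gamma_{A33}$, $\Gamma_{B33}$, $\Gamma_{333}$ vanish identically and $\Gamma_{113}=-\Gamma_{131}$, $\Gamma_{223}=-\Gamma_{232}$ vanish on $\mathcal{T}$. Tangential derivatives $e_0, e_A$ of any quantity vanishing identically on $\mathcal{T}$ also vanish.

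\textbf{Step 1: $\widehat{\bf R}_{30}=\widehat{\bf R}_{03}=0$.} Apply \eqref{hatRb0} with $b=3$ and expand:
\[
\widehat{\bf R}_{30} = \sum_i \bigl(e_iK_{3i} - \Gamma_{i3}{}^l K_{li} - \Gamma_{ii}{}^l K_{3l}\bigr) - e_3\mathrm{tr}K.
\]
A term-by-term inspection using the inputs above shows that every piece involving a $K_{A3}$ factor or a $\Gamma_{A3B}$, $\Gamma_{AB3}$, $\Gamma_{A33}$, $\Gamma_{113}$, $\Gamma_{223}$, $\Gamma_{333}$ factor vanishes, while tangential derivatives like $e_A K_{3A}$ also drop. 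What remains is $e_3K_{33} - e_3\mathrm{tr}K = -e_3(K_{11}+K_{22})$. Applying \eqref{compcond} with $\Gamma_{311}=\Gamma_{322}=0$ and $\Gamma_{321}=-\Gamma_{312}$ gives $e_3K_{11}=2\Gamma_{312}K_{12}$ and $e_3K_{22}=-2\Gamma_{312}K_{12}$, whose sum vanishes. Hence $\widehat{\bf R}_{30}=0$ on $\mathcal{T}$, and \eqref{Rb0R0bhat} yields $\widehat{\bf R}_{03}=-\widehat{\bf R}_{30}=0$.

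\textbf{Step 2: $\widehat{\bf R}_{B3}^{(A)}=\widehat{\bf R}_{3B}^{(A)}=0$.} Comparing \eqref{e0Kijhat} with \eqref{e0KijRic2} furnishes the identity
\[
\widehat{\bf R}_{ij} \;=\; \widehat{R}_{ij}^{(A)} + \tfrac{1}{2}\delta_{ij}\bigl[\widehat{R}-|K|^2+(\mathrm{tr}K)^2\bigr],
\]
which for $i\neq j$ reduces to $\widehat{\bf R}_{ij}=\widehat{R}_{ij}^{(A)}$. By \eqref{Rijeq}, $\widehat{\bf R}_{B3}^{(S)}=-\delta_{B3}\widehat{\bf R}_{00}=0$, so $\widehat{\bf R}_{B3}^{(A)}=\widehat{\bf R}_{B3}=\widehat{R}_{B3}^{(A)}$, and it suffices to show $\widehat{R}_{B3}$ vanishes on $\mathcal{T}$. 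Using the (asymmetric) expansion underlying \eqref{Rijhat},
\[
-\widehat{R}_{B3} = e_B\Gamma^b{}_{3b} - e^b\Gamma_{B3b} + \Gamma^b{}_B{}^c\Gamma_{c3b} + \Gamma^b{}_b{}^c\Gamma_{B3c} - \Gamma_B{}^{bc}\Gamma_{c3b} - \Gamma_B{}^{bc}\Gamma_{b3c},
\]
every term either contains a factor of the form $\Gamma_{B3b}$ or $\Gamma_{B3c}$ (directly zero on $\mathcal{T}$), or a product in which a case-by-case inspection forces a vanishing factor using \eqref{bdcond} and antisymmetry, or is the tangential derivative $e_B\Gamma^b{}_{3b}$ of $\Gamma_{131}+\Gamma_{232}+\Gamma_{333}$, which vanishes on $\mathcal{T}$. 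Hence $\widehat{R}_{B3}=0$ and thus $\widehat{\bf R}_{B3}^{(A)}=0$, with $\widehat{\bf R}_{3B}^{(A)}=-\widehat{\bf R}_{B3}^{(A)}=0$.

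\textbf{Main obstacle.} The delicate part is the combinatorial bookkeeping required to verify that every term in the expansions of $\widehat{\bf R}_{30}$ and $\widehat{R}_{B3}$ either vanishes outright or reduces to the compatibility relation $e_3K_{11}+e_3K_{22}=0$; once one is disciplined about combining \eqref{bdcond} with the antisymmetry of $\Gamma$ in its last two indices, the remaining steps are essentially algebraic.
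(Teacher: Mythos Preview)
Your overall strategy is sound and the individual computations you carry out are correct, but there is a genuine logical gap in Step~2, and your Step~1 takes a longer route than necessary.

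\medskip
\textbf{Step 1 (comparison).} The paper does not use \eqref{hatRb0} here; it uses instead the alternative representation \eqref{Rb0R0bhat},
\[
{\bf \widehat{R}}_{03}=-{\bf \widehat{R}}_{30}=-e_0\Gamma_{i3}{}^i-K_i{}^c\Gamma_{c3}{}^i,
\]
which on $\mathcal{T}$ reduces to $-e_0\Gamma_{B3}{}^B-K_{A3}\Gamma_{33}{}^A-K_A{}^B\Gamma_{B3}{}^A$ and vanishes immediately from \eqref{bdcond} alone. Your route via \eqref{hatRb0} is valid, but forces you to invoke the first-order relations of Lemma~\ref{lem:compcond} (extended to all of $\mathcal{T}$) to handle the residual $e_3(K_{11}+K_{22})$; the paper avoids this entirely.

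\medskip
\textbf{Step 2 (the gap).} Your displayed identity $\widehat{\bf R}_{ij}=\widehat{R}_{ij}^{(A)}+\tfrac12\delta_{ij}[\widehat{R}-|K|^2+(\mathrm{tr}K)^2]$ is correct (it follows by combining \eqref{R0i0jhat} with \eqref{e0KijRic2}), and for $i\neq j$ it gives $\widehat{\bf R}_{B3}=\widehat{R}_{B3}^{(A)}$. But then you write ``it suffices to show $\widehat{R}_{B3}$ vanishes,'' which does not follow: $\widehat{R}_{B3}^{(A)}=\tfrac12(\widehat{R}_{B3}-\widehat{R}_{3B})$, and the spatial $\widehat{R}_{ij}$ is not known to be symmetric (the connection $\widetilde D$ has torsion). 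You compute only $\widehat{R}_{B3}=0$ and never address $\widehat{R}_{3B}$, whose expansion contains the normal derivative $e_3\Gamma^b{}_{Bb}$, which is \emph{not} killed by \eqref{bdcond} alone.

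There are two clean fixes. The simplest is to bypass $\widehat{R}_{B3}^{(A)}$ altogether: from \eqref{R0i0jhat} with $i=B$, $j=3$ one has $\widehat{\bf R}_{B3}=\widehat{R}_{B3}+e_0K_{B3}+\mathrm{tr}K\,K_{B3}$, and on $\mathcal{T}$ the last two terms vanish by \eqref{bdcond}; hence $\widehat{\bf R}_{B3}=\widehat{R}_{B3}$ on $\mathcal{T}$, and your computation of $\widehat{R}_{B3}=0$ then finishes the job. This is essentially the paper's argument (it phrases the same relation via the contracted \eqref{Gausshat} and \eqref{R0b0i=R0i0b}). Alternatively, you can verify $\widehat{R}_{3B}=0$ directly, but then you must use the first relation in \eqref{compcond} to substitute for $e_3\Gamma^b{}_{Bb}$; after that substitution all remaining terms do vanish by \eqref{bdcond} and antisymmetry.
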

\begin{proof}
All subsequent computations are restricted to the boundary $\mathcal{T}$.
The first boundary condition follows 
by setting $b=3$ in \eqref{Rb0R0bhat} and using the boundary condition \eqref{bdcond}:
\begin{align*}
{\bf \widehat{R}}_{03}=-{\bf \widehat{R}}_{30}
=-e_0\Gamma_{i3}{}^i-K_i{}^c\Gamma_{c3}{}^i=-e_0\Gamma_{B3}{}^B-K_{A3}\Gamma_{33}{}^A-K_A{}^B\Gamma_{B3}{}^A=0.
\end{align*}
For the second boundary condition, we first notice that by \eqref{Rijeq} it holds 
\begin{align*}
{\bf\widehat{R}}_{B3}=-{\bf\widehat{R}}_{3B}\qquad\Rightarrow\qquad{\bf\widehat{R}}_{B3}^{{}{(A)}}=-{\bf\widehat{R}}_{3B}^{{}{(A)}}={\bf\widehat{R}}_{B3}.
\end{align*}
Contracting \eqref{Gausshat} in $(a;b)$ and setting $i=B,j=3$, we obtain
\begin{align*}
{\bf\widehat{R}}_{B3}-{\bf\widehat{R}}_{0B03}=&\,\widehat{R}_{B3}+\text{tr}KK_{B3}-K_3{}^aK_{Ba}\\
\tag{by \eqref{Rijhat}, $i=B,j=3$}=&\,e^b\Gamma_{B3b}-e_B\Gamma^b{}_{3b}
{}{+\Gamma^b{}_B{}^c\Gamma_{c3b}+\Gamma^b{}_b{}^c\Gamma_{B3c}}\\
&+\text{tr}KK_{B3}-K_3{}^aK_{Ba}\\
\tag{by \eqref{R0b0i=R0i0b}}{\bf\widehat{R}}_{B3}=&\,
e^C\Gamma_{B3C}-e_B\Gamma^C{}_{3C}
+\Gamma^C{}_B{}^D\Gamma_{D3C}
+\Gamma^C{}_{B3}\Gamma_{33C}+\Gamma^a{}_a{}^C\Gamma_{B3C}\\
&+\text{tr}KK_{B3}-K_3{}^aK_{Ba}+e_0K_{B3}+K_B{}^AK_{3A}+K_{B3}K_{33}
\end{align*}
Every term in the preceding RHS vanished by virtue of the boundary condition \eqref{bdcond}, which implies the vanishing of $\widehat{\bf R}_{B3}$ and hence that of $\widehat{\bf R}^{{}{(A)}}_{B3}$.
\end{proof}

\subsection{Final step}\label{subsec:finstep}

The equations \eqref{e0Cijb}-\eqref{Rijeq} constitute a {\it linear} first order {\it symmetric} hyperbolic system (see also Remark \ref{rem:Ricsym}) for the variables ${\bf \widehat{R}}_{\mu0},{\bf\widehat{R}}_{ij}^{{}{(A)}},C_{ijb}$, which in the presence of a timelike boundary also satisfy the conditions \eqref{bdcondRic}. As an immediate implication, we conclude that the solution $K_{ij},\Gamma_{ijb},f_i{}^j$, $f^b{}_j$ to the reduced equations \eqref{e0Kijnew}, \eqref{e0Gammaijbnew}, \eqref{e0fij}, \eqref{e0fijinv}, is indeed a solution to the EVE. More precisely, we have:
\begin{proposition}\label{prop:EVEsol}
Consider a solution to the reduced equations \eqref{e0Kijnew}, \eqref{e0Gammaijbnew}, \eqref{e0fij}, \eqref{e0fijinv}, such that
\begin{enumerate}
\item $K_{ij},\Gamma_{ijb},f_i{}^j,f^b{}_j\in L^\infty_tH^s$, $s\ge3$, for the classical Cauchy problem;
\item $K_{ij},\Gamma_{ijb},f_i{}^j,f^b{}_j\in L^\infty_tB^s$, $s\ge7$, subject to \eqref{bdcond}, for the boundary value problem.
\end{enumerate}
Then the variables ${\bf \widehat{R}}_{\mu\nu},C_{ijb}$ vanish. In particular, ${\bf \widetilde{D}}$ is the Levi-Civita connection ${\bf D}$ of ${\bf g}$. Moreover, ${\bf g}$ satisfies the EVE and in the case 2. the boundary is totally geodesic.
\end{proposition}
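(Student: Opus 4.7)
The strategy is to regard \eqref{e0Cijb}--\eqref{RijAeq} as a \emph{linear, homogeneous, symmetric hyperbolic system} for the unknowns $W:=(C_{ijb},\widehat{\bf R}_{i0},\widehat{\bf R}_{00},\widehat{\bf R}_{ij}^{{}{(A)}})$, whose coefficients depend on the already-constructed $(K_{ij},\Gamma_{ijb},f_i{}^j,f^b{}_j)$ and their first derivatives, as emphasized in Remark~\ref{rem:Ricsym}. Under the regularity hypotheses, these coefficients are bounded and Lipschitz, so standard $L^2$ energy estimates apply. If we can verify that $W\equiv 0$ on $\Sigma_0$ and that the associated boundary flux on $\mathcal{T}$ is nonnegative, a Gr\"onwall argument will force $W\equiv 0$ throughout the domain of dependence. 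Then $C_{ijb}\equiv 0$ says that $\widetilde{\bf D}$ is torsion-free, so by compatibility with ${\bf g}$ it coincides with the Levi-Civita connection ${\bf D}$; consequently $\widehat{\bf R}_{\alpha\beta}=\widetilde{\bf R}_{\alpha\beta}={\bf R}_{\alpha\beta}$, and $\widehat{\bf R}_{i0}=\widehat{\bf R}_{00}=\widehat{\bf R}^{(A)}_{ij}=0$ together with the algebraic identity \eqref{Rijeq} yields ${\bf R}_{\alpha\beta}=0$, i.e.~\eqref{EVE}. In the boundary case, $\chi\equiv 0$ is then read off from \eqref{bdid} using $K_{A3}=\Gamma_{A3B}=0$.

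For the vanishing of the initial data of $W$ on $\Sigma_0$, I would argue component by component using Section~\ref{subsec:ID}. Initially $\Gamma_{ijb}$ are the true Levi-Civita connection coefficients of $g$ in the frame $e_1,e_2,e_3$, whence $[e_i,e_j]=(\Gamma_{ij}{}^c-\Gamma_{ji}{}^c)e_c$ and \eqref{Cijb} gives $C_{ijb}|_{t=0}=0$. Using $\widetilde{D}=D$ on $\Sigma_0$, formula \eqref{hatRb0} and the momentum constraint \eqref{momconst} give $\widehat{\bf R}_{i0}|_{t=0}=0$, while the contracted Gauss identity \eqref{contrGausshat} together with the relation $\widehat{\bf R}=-4\widehat{\bf R}_{00}$ from \eqref{Rhat} and the Hamiltonian constraint \eqref{Hamconst} yields $\widehat{\bf R}_{00}|_{t=0}=0$. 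Finally, $\widehat{\bf R}_{ij}^{(A)}|_{t=0}=0$ follows from \eqref{hataRic}, since all terms on its right-hand side are quadratic in $C$ or involve $\widetilde{\bf D}_\mu C$; the spatial $\widetilde{\bf D}_i C$ vanish because $C\equiv 0$ on $\Sigma_0$, and $e_0 C_{ijb}|_{t=0}$ is seen to vanish from \eqref{e0Cijb} once we already know that $C$ and $\widehat{\bf R}_{i0}$ vanish on $\Sigma_0$.

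The main point, and the step that requires actual work, is the boundary term analysis in the boundary case. Contracting \eqref{e0Cijb}--\eqref{RijAeq} with the corresponding unknowns and integrating by parts on $U_t$ (as in the proof of Proposition~\ref{prop:locex}), the only surviving top-order contributions on $\mathcal{T}$ have the schematic form
\begin{align*}
\int_{\mathcal{S}_t}\Bigl(\widehat{\bf R}_{30}\,\widehat{\bf R}_{00}+\widehat{\bf R}_{i0}\,\widehat{\bf R}^{(A)}_{i3}+\widehat{\bf R}^{(A)}_{3j}\,\widehat{\bf R}_{j0}\Bigr)\,\mathrm{vol}_{\mathcal{S}_t},
\end{align*}
which identically vanish thanks to the boundary conditions $\widehat{\bf R}_{30}=0$ and $\widehat{\bf R}^{(A)}_{A3}=0$ supplied by Lemma~\ref{lem:bdcond.Ric} (together with the trivial vanishing of $\widehat{\bf R}^{(A)}_{33}$ by antisymmetry). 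The remaining zeroth-order contributions from $L_\mu(C,\widehat{\bf R})$ and $M_{ij}(C,\widehat{\bf R})$, being linear in $W$ with coefficients bounded by $C(\|K\|_{C^1},\|\Gamma\|_{C^1},\|f\|_{C^1})$, are absorbed by Gr\"onwall. I expect the main technical nuisance (for the boundary problem) to be the extraction of this boundary structure in the presence of the many algebraic couplings in the system, together with checking that the regularity from Proposition~\ref{prop:bdlocex} is indeed enough to justify the energy identity, which I would handle by commuting with tangential derivatives exactly as in Section~\ref{se:lwibvp} before specializing to $W$.
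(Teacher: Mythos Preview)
Your proposal is correct and follows essentially the same approach as the paper: treat \eqref{e0Cijb}--\eqref{RijAeq} as a linear homogeneous symmetric hyperbolic system, verify vanishing initial data via the constraints and \eqref{hataRic}, \eqref{hatRb0}, \eqref{contrGausshat}, and in the boundary case check that the $\mathcal{T}$-flux vanishes by Lemma~\ref{lem:bdcond.Ric}. Two minor remarks: (i) your boundary integrand has a redundant term (the last two summands are the same contribution by antisymmetry of $\widehat{\bf R}^{(A)}$), and no commutation is needed here since a zeroth-order $L^2$ estimate already closes; (ii) to conclude $\chi\equiv 0$ you should also note $\chi_{00}=-{\bf g}(e_3,{\bf D}_{e_0}e_0)=0$, since \eqref{bdid} only covers $\chi_{0A}$ and $\chi_{AB}$.
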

\begin{proof}
The coefficients in \eqref{e0Cijb}-\eqref{Rijeq} depend on $K_{ij},\Gamma_{ijb},f_i{}^j,f^b{}_j$ and their first spatial derivatives. Hence, they are bounded, provided up to three of their spatial derivatives are bounded in $L^2$. This is consistent with the spaces $L^\infty_tH^s$, for $s\ge3$, and $L^\infty_tB^s$, for $s\ge7$.

In the absence of a boundary, the symmetry of the system \eqref{e0Cijb}-\eqref{Rijeq} implies uniqueness of solutions (via a standard energy estimate). Since $C_{ijb}$ vanishes on the initial hypersurface, we have that ${\bf\widetilde{D}}={\bf D}$. {}{By virtue of \eqref{hataRic} (for $\alpha=i,\beta=j$) and \eqref{Cijb}, we have that $\widehat{\bf R}_{ij}^{{}{(A)}}\big|_{\Sigma_0}=0$.} Also, the validity of the constraints, together with the formula \eqref{Rijeq}, implies {}{${\bf\widehat{R}}_{\mu0}\big|_{\Sigma_0}=0$}, see \eqref{hatRb0}, \eqref{contrGausshat}. Hence, ${\bf\widehat{R}}_{\mu0},{\bf\widehat{R}}_{ij}^{{}{(A)}},C_{ijb}$ vanish everywhere and ${\bf\widetilde{D}}={\bf D}$. By \eqref{Rijeq}, ${\bf\widehat{R}}_{ij}^{{}{(S)}}=0$, and hence, ${\bf\widehat{R}}_{\mu\nu}={\bf R}_{\mu\nu}=0$.

In the presence of a timelike boundary, we notice that in a typical $L^2$-energy estimate for \eqref{e0Cijb}-\eqref{RijAeq}, the arising $\mathcal{T}$-boundary terms equal
\begin{align}\label{bdtermRic}
\int_{{}{\mathcal{S}_t}}{\bf \widehat{R}}_{00}{\bf \widehat{R}}_{30}+{\bf\widehat{R}}_{i3}^{{}{(A)}}{\bf \widehat{R}}^i{}_0\mathrm{vol}_{{}{\mathcal{S}_t}}=\int_{{}{\mathcal{S}_t}}{\bf \widehat{R}}_{00}{\bf \widehat{R}}_{30}+{\bf\widehat{R}}_{B3}^{{}{(A)}}{\bf \widehat{R}}^B{}_0+{\bf\widehat{R}}_{33}^{{}{(A)}}{\bf \widehat{R}}_{30}\mathrm{vol}_{{}{\mathcal{S}_t}}\overset{\eqref{bdcondRic}}{=}0.
\end{align}
Therefore, an energy estimate closes and the previous argument applies. Since $\widetilde{\bf D}={\bf D}$ is the actual Levi-Civita connection of ${\bf g}$, the variables $K_{ij},\Gamma_{ijb}$ are the true connection coefficients of the orthonormal frame $\{e_\mu\}_0^3$, given by \eqref{Kij}, \eqref{Gammaijb}. Hence, the geometric formulas \eqref{bdid} are valid, where $\chi_{0A},\chi_{AB}$ are the components of the actual second fundamental form $\chi$ of $\mathcal{T}$, which vanish by virtue of the condition \eqref{bdcond}. The component $\chi_{00}={\bf g}({\bf D}_{e_0}e_3,e_0)=-{\bf g}(e_3,{\bf D}_{e_0}e_0)$ vanishes, since $e_0$ is geodesic. We conclude that $\chi\equiv0$, i.e., $\mathcal{T}$ is totally geodesic.
\end{proof}

\begin{proof}[Proof of Theorems \ref{thmA}, \ref{thmB}]
It is a combination of Propositions \ref{prop:locex}, \ref{prop:bdlocex}, \ref{prop:EVEsol}. We note in particular that geometric uniqueness is immediate from the homogeneity of our boundary conditions. After setting up the geodesic gauge in any vacuum spacetime with totally geodesic timelike boundary, the relevant connection coefficients will vanish, in which case the uniqueness statement for the reduced system of equations applies to solutions with the same initial data.
\end{proof}

\end{document}